\documentclass[9pt]{amsart}
\usepackage[margin=2.5cm, marginpar=2.5cm]{geometry}

\usepackage{booktabs}
\usepackage{amssymb}
\usepackage{braket}
\usepackage{mathrsfs}
\usepackage{ifthen}
\usepackage{graphicx}

\usepackage{comment} 
\usepackage{mleftright}
\usepackage[pagebackref,hypertexnames=false]{hyperref} 
\usepackage{cleveref}
 \usepackage[all]{xy}
 \usepackage{amscd}
 \usepackage{color}
 \usepackage{enumitem}
\newlist{steps}{enumerate}{1}
\setlist[steps, 1]{label = Step \arabic*:}
\usepackage{scrextend}
\usepackage[alphabetic,backrefs,msc-links]{amsrefs}

\makeatletter
\DeclareRobustCommand\widecheck[1]{{\mathpalette\@widecheck{#1}}}
\def\@widecheck#1#2{%
   \setbox\z@\hbox{\m@th$#1#2$}%
   \setbox\tw@\hbox{\m@th$#1%
      \widehat{%
         \vrule\@width\z@\@height\ht\z@
         \vrule\@height\z@\@width\wd\z@}$}%
   \dp\tw@-\ht\z@
   \@tempdima\ht\z@ \advance\@tempdima2\ht\tw@ \divide\@tempdima\thr@@
   \setbox\tw@\hbox{%
      \raise\@tempdima\hbox{\scalebox{1}[-1]{\lower\@tempdima\box\tw@}}}%
   {\ooalign{\box\tw@ \cr \box\z@}}}
\makeatother

\theoremstyle{plain}
\newtheorem{thm}{Theorem}[section]
\crefname{thm}{Theorem}{Theorems}
\Crefname{thm}{Theorem}{Theorems}
\newtheorem{prop}[thm]{Proposition}
\crefname{prop}{Proposition}{Propositions}
\Crefname{prop}{Proposition}{Propositions}
\newtheorem{lem}[thm]{Lemma}
\crefname{lem}{Lemma}{Lemmas}
\Crefname{lem}{Lemma}{Lemmas}
\newtheorem{cor}[thm]{Corollary}
\crefname{cor}{Corollary}{Corollaries}
\Crefname{cor}{Corollary}{Corollaries}

\crefname{claim}{Claim}{Claims}
\Crefname{claim}{Claim}{Claims}

\crefname{property}{Property}{Properties}
\Crefname{property}{Property}{Properties}

\crefname{problem}{Problem}{Problems}
\Crefname{problem}{Problem}{Problems}

\crefname{ques}{Question}{Questions}
\Crefname{ques}{Question}{Questions}

\theoremstyle{definition}
\newtheorem{defn}[thm]{Definition}
\crefname{defn}{Definition}{Definitions}
\Crefname{defn}{Definition}{Definitions}

\crefname{notation}{Notation}{Notations}
\Crefname{notation}{Notation}{Notations}

\crefname{convention}{Convention}{Conventions}
\Crefname{convention}{Convention}{Conventions}

\crefname{cond}{Condition}{Conditions}
\Crefname{cond}{Condition}{Conditions}

\crefname{assum}{Assumption}{Assumptions}
\Crefname{assum}{Assumption}{Assumptions}

\crefname{conj}{Conjecture}{Conjectures}
\Crefname{conj}{Conjecture}{Conjectures}

\theoremstyle{remark}
\newtheorem{rem}[thm]{Remark}
\crefname{rem}{Remark}{Remarks}
\Crefname{rem}{Remark}{Remarks}

\crefname{ex}{Example}{Examples}
\Crefname{ex}{Example}{Examples}

\crefname{section}{Section}{Sections}
\Crefname{section}{Section}{Sections}
\crefname{subsection}{Subsection}{Subsections}
\Crefname{subsection}{Subsection}{Subsections}
\crefname{figure}{Figure}{Figures}
\Crefname{figure}{Figure}{Figures}

\newcommand{\Z}{\mathbb{Z}}

\newcommand{\Q}{\mathbb{Q}}

\newcommand{\sign}{\mathrm{sign}}

\newcommand{\id}{\mathrm{id}}

\newcommand{\Spin}{\mathrm{Spin}}
\newcommand{\Pin}{\mathrm{Pin}}

\newcommand{\C}{\mathbb{C}}
\newcommand{\s}{\mathfrak{s}}

\newcommand{\pr}{\text{pr}}

\newcommand{\R}{\mathbb R}

\newcommand{\ctext}[1]{\raise0.2ex\hbox{\textcircled{\scriptsize{#1}}}}
\newcommand{\Sp}{\mathrm{Sp}}

\def\ker{\operatorname{Ker}}

\def\det{\operatorname{det}}

\def\dim{\operatorname{dim}}

\def\id{\operatorname{Id}}

\newcommand{\mbar}[1]{{\ooalign{\hfil#1\hfil\crcr\raise.167ex\hbox{--}}}}

\def\wt{\widetilde}

\title{Real Floer Homotopy Types and $w$-Invariants}
\author{Yoshihiro Fukumoto and Masaki Taniguchi}

\begin{document}

\maketitle

\begin{abstract}
We express the local-equivalence classes of real Seiberg--Witten Floer
homotopy types for spin Seifert $3$--manifolds with certain odd involutions in terms of the Fukumoto--Furuta $w$--invariant.  The proof uses real orbifold Bauer--Furuta invariant
for a class of spin $4$--orbifolds constructed by
Fukumoto--Furuta--Ue.  For torus knots, we prove that the local-equivalence
classes of real Floer homotopy types associated with their $2^k$--fold cyclic branched covers are
eventually periodic in $k$.  Finally, we prove that the integer-valued
concordance homomorphisms
$\{8\delta_R^{(k)}\}_{k\geq1}$ are linearly independent.
\end{abstract}

\section{Introduction}

Yang--Mills gauge theory on $4$--orbifolds with isolated singularities was
initiated by Fintushel and Stern \cite{Fintushel-Stern:1985-1} and was applied
to the study of smooth $4$--manifolds bounded by Seifert homology 3-spheres. See \cite{FL86, Lawson88, Lawson87, FS87,Fu90, FurutaOrbifold92,FF00,fukumoto2001w,   Ue01, UeSeifertSurgery, UeRochlin, Fuk11 } for further studies of gauge theory for 4-orbifolds with isolated singularities. 
For a Seifert fibered homology $3$--sphere
$Y=\Sigma(a_1,\ldots,a_n)$, Fintushel and Stern introduced a combinatorial
invariant $R(a_1,\ldots,a_n)$,
now called the {\it Fintushel--Stern invariant}, and used it to obstruct the
existence of compact positive-definite smooth $4$--manifolds bounded by
Seifert homology spheres.  From the gauge-theoretic point of view,
$R(a_1,\ldots,a_n)$ is the formal dimension of an instanton moduli space on a
certain $4$--orbifold bounded by $Y$. Furuta \cite{Furuta:1990-1} and
Fintushel--Stern \cite{FS90} extended this approach to linear
combinations of Seifert homology spheres, thereby proving that the
$3$--dimensional homology cobordism group is infinitely generated.

The Seiberg--Witten counterpart of this orbifold approach was developed
by Fukumoto and Furuta \cite{FF00}.  Let $Y$ be an integral homology
$3$--sphere and let $(X,\widetilde{\mathfrak{s}})$ be a compact spin
$4$--orbifold with boundary $(Y,\mathfrak{s})$.  Fukumoto and Furuta
defined an index-theoretic invariant $ 
w(X,\widetilde{\mathfrak{s}})$
from the orbifold spin Dirac operator.  In general, this invariant depends
on the choice of the bounding spin $4$--orbifold.  For the distinguished
orbifolds associated with Seifert and plumbed homology spheres,
Fukumoto--Furuta--Ue \cite{fukumoto2001w} and Saveliev \cite{Sa02}
identified it with the negative of the {\it Neumann--Siebenmann invariant} \cite{Neumann80}:
\[
w(X,\widetilde{\mathfrak{s}})=-\bar{\mu}(Y).
\]

The invariant $\bar{\mu}(Y)$ has several other descriptions using Floer theory.   For an
almost-rational plumbed homology sphere $Y$, oriented as the boundary of
a negative-definite plumbing, one has
\[
\beta(Y)=-\bar{\mu}(Y),
\qquad
\frac{\underline{d}(Y)}{2}=-\bar{\mu}(Y),
\]
by \cite{dai2018pin,stoffregen2020pin,DM19}.  Moreover, Dai--Sasahira--Stoffregen 
\cite{DSS2023} proved 
\[
\kappa(Y)=
\begin{cases}
-\bar{\mu}(Y)
& \text{if } \delta(Y)=-\bar{\mu}(Y),\\
-\bar{\mu}(Y)+2
& \text{if } \delta(Y)>-\bar{\mu}(Y).
\end{cases}
\]
Here $\beta$ is Manolescu's Pin$(2)$-equivariant Fr\o yshov-type invariant,
$\underline{d}$ is the involutive Heegaard Floer correction term,
$\delta$ is the monopole Fr{\o}yshov invariant, and $\kappa$ is
Manolescu's Pin$(2)$--equivariant Floer $K$--theoretic invariant.

Recently, Konno--Miyazawa and the second author \cite{KMT21, KMT:2023}\footnote{The authors of  \cite{KMT21, KMT:2023} only treat non-free involutions on 3- and 4-manifolds.  See also \cite{MPT25} for free involutions. } introduced real versions of these homology cobordism invariants for a spin 3-manifold $Y$ equipped with an odd involution 
$\tau$:
\begin{align}\label{main_inv}
\delta_R(Y,\tau), \qquad \underline{\delta}_R(Y,\tau), \qquad \overline{\delta}_R(Y,\tau), \qquad \kappa_R(Y,\tau)  \qquad \in \frac{1}{16} \Z.
\end{align}
Here, {\it odd involution}  $\tau$ means a smooth involution on $Y$ so that a lift of it to the spin structure is order $4$. Also, compared to the classical case, the homological conditions to define these invariants are milder, expressed as
\begin{align}\label{b_1_cond}
  \mathrm{id}  =  ( \tau^* \colon H^1(Y; \R) \to H^1(Y; \R))  . 
\end{align}
Using these invariants, we can obtain a family of concordance invariants: 
For a given knot $K$ in $S^3$, by considering its $2^k$-covering space $\Sigma_{2^k }(K)$ equipped with the unique spin structure, one can associate a pair $(\Sigma_{2^k }(K), \tau)$, where $\tau$ is an involution obtained as the $2^{k-1}$-th power of the covering action. Then we shall obtain 
\begin{align}\label{list_conc_inv}
\delta_R^{(k)} (K), \qquad \underline{\delta}_R^{(k)} (K), \qquad  \overline{\delta}_R^{(k)} (K), \qquad    \kappa_R^{(k)} (K)  \qquad \in \frac{1}{8}\Z
\end{align}
for $k \geq 1$.\footnote{Originally, these invariants take values in $\frac{1}{16}\Z $ but we can check the values are actually in $\frac{1}{8}\Z$ for knots in $S^3$. See \cref{prop:eighth-integrality-knot-invariants}. } In particular, the invariants $\{\kappa_R^{(k)} (K) \}_{k\in \Z_{\geq 1}}$  are enough to show that any non-trivial cable of the figure eight knot is not slice, see \cite{KPT24, KPT25}.

In this paper, we develop real Seiberg--Witten theory for odd spin 4--orbifolds in order to establish a general relation between Fukumoto--Furuta invariants and \eqref{main_inv}. 
Note that the numerical invariants listed in \eqref{main_inv} are all derived from the most refined invariant 
\[
[SWF_R(Y,\tau)]_{\mathrm{loc}} \in \mathcal{LE}_{\Z_4},
\]
called the {\it local equivalence class}, introduced in \cite{KMT21, KMT:2023, MPT25}, where \(\mathcal{LE}_{\Z_4}\) denotes the group of local equivalence classes of
stable homotopy types of the relevant \(\Z_4\)-equivariant finite and pointed CW complexes.
\footnote{See ~\cite{TW09, Na13, Nak15, KMT21, Ka22, Ji22, KMT:2023, Mi23, Li23, BH24} for the further background of real Seiberg--Witten theory. } 
We determine this local equivalence class for certain spin-odd Seifert 3-manifolds in terms of the Fukumoto--Furuta invariants as follows: 
\begin{thm}\label{thm:main}
Let $(Y, \s, \tau)$ be a closed spin Seifert 3-manifold with an odd involution and with genus-$0$ oriented base.   
If the involution $\tau$ commutes with the Seifert $S^1$-action and satisfies \eqref{b_1_cond}, we have 
    \[
    [SWF_R( Y,\mathfrak{s}, \tau)]_{\mathrm{loc}} =\left[\left(\C_+^{-\frac{1}{2} w(X, \wt{\mathfrak{s}})} \right)^+\right] _{\mathrm{loc}},
    \]
    where $w(X, \wt{\mathfrak{s}})$ is the Fukumoto--Furuta invariant for a spin 4-orbifold constructed in \cite{fukumoto2001w} and $\mathbb C_+$ denote the one-dimensional complex
$\Z_4 = \langle j \rangle (\subset \mathrm{Pin}(2))$--representation on which $j$ acts by multiplication by $i$. 
\end{thm}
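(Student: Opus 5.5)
The plan is to argue as in the classical computations of $\bar\mu$ for Seifert spaces: build a negative-definite-type spin $4$-orbifold bounded by $Y$ on which the real Seiberg--Witten theory reduces to its reducible part, and produce local maps in both directions between $SWF_R(Y,\mathfrak{s},\tau)$ and the sphere $(\C_+^{-\frac12 w})^+$.

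\textbf{Step 1: the orbifold and the involution.} Take the Fukumoto--Furuta--Ue orbifold $X$ of \cite{fukumoto2001w}, i.e.\ the mapping cylinder of the Seifert projection $Y\to S^2$ (orbifold disk bundle over the genus-$0$ orbifold base), with boundary $Y$, isolated cone singularities, and spin structure $\wt{\mathfrak{s}}$ extending $\mathfrak{s}$. Since $\tau$ commutes with the $S^1$-action, it descends to the base and extends fiberwise to an involution $\wt\tau$ of $X$. We then check that $\wt\tau$ lifts to an order-$4$ (odd) automorphism of $\wt{\mathfrak{s}}$ extending the given lift on $Y$. Condition \eqref{b_1_cond}, together with $b_1^{\wt\tau}$-type vanishing on $X$ (a rational ball or $b^+=0$ orbifold), is what guarantees that the real Bauer--Furuta construction is well defined.

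\textbf{Step 2: two real orbifold Bauer--Furuta maps.} Apply the real orbifold relative Bauer--Furuta invariant to $(X,\wt{\mathfrak s},\wt\tau)$ and to $-X$ (or the reversed cobordism). Each yields a $\Z_4$-equivariant stable map, between $(\C_+^{m})^+$ and $SWF_R(Y)$ in one direction and from $SWF_R(Y)$ to $(\C_+^{m})^+$ in the other. Here $m$ is the real index of the $\wt\tau$-invariant orbifold Dirac operator shifted by the grading correction. The $\Z_4$-fixed-point restriction is a homotopy equivalence because $b^+_{\wt\tau}=0$, so these maps are local maps. By the orbifold index theorem, as in the definition of $w$, the complex index of the real part is half that of the full Dirac index, giving $m=-\frac12 w(X,\wt{\mathfrak s})$. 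It is the $\Z_4$-representation $\C_+$ that appears, because $j$ acts as an order-$4$ element on the real spinor kernel.

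\textbf{Step 3: upgrading the inequalities to equality.} One local map gives $[SWF_R]\ge[(\C_+^{m})^+]$. For the reverse we use the Seifert-specific input of \cite{fukumoto2001w}: the positive-scalar-curvature-type metric of Seifert type on the orbifold, or more precisely the vanishing of the kernel of the $\wt\tau$-invariant Dirac operator for a suitable adiabatic metric. With that vanishing, the moduli space on $X$ consists only of the reducible, and the Bauer--Furuta map is, up to homotopy, a linear inclusion. Gluing the cobordism $X$ with its reverse, or alternatively using Mrowka--Ozsv\'ath--Yu style computations of the Floer homotopy type of Seifert spaces where irreducibles have even-index behavior, then yields the reverse local map.

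\textbf{Main obstacle.} The hardest step is the reverse direction in Step 3. One has to control irreducible real solutions on the orbifold and show that the real Seiberg--Witten Floer spectrum of $Y$ is locally a sphere; the $\bar\mu$-analogue fails for $\kappa$, and the real case should succeed only because the $\Z_4$ fixed points are simpler. My first attempt would be to show that $\tau$-invariant irreducibles on $\R\times Y$ for the Seifert metric are $S^1$-invariant vortex-type solutions, and that the corresponding cells occur only in degrees forcing the real Floer spectrum to be locally equivalent to its reducible sphere. That sphere is then identified with $(\C_+^{m})^+$ by Step 2.
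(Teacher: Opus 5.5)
There is a genuine gap. It sits in Step~1, not in Step~3 where you locate the difficulty.

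\textbf{The orbifold.} The orbifold you choose is not the Fukumoto--Furuta--Ue orbifold, and it need not carry a spin structure extending $\mathfrak{s}$. In the orbifold disk bundle (mapping cylinder) over $\check\Sigma$, the regular fiber bounds a fiber disk. So $\mathfrak{s}$ can extend only if it is bounding on the regular fiber, i.e.\ $\mu=1$ in the paper's notation.

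\begin{itemize}
\item For odd type C involutions this holds.
\item An odd type A involution can have $\mu=0$. Take $S^3$ with the Hopf fibration and $\tau(z_1,z_2)=(z_1,-z_2)$. It commutes with the $S^1$-action, covers the rotation of $S^2$, and is odd because it fixes a circle. Your $X$ is then a punctured $\mathbb{CP}^2$ (up to orientation), which is not spin.
\end{itemize}

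The paper instead uses the tunnel construction $X_0=W_0\times S^1\cup(\text{solid pieces})$, capped by cyclic quotient balls. There the fiber direction is a product factor, so both values of $\mu$ occur. Even so, extending $\mathfrak{s}$, $\tau$ and the odd lift over $X_0$ and over the caps requires two choices (\cref{Prop:extension} and the cap lemma):
\begin{itemize}
\item symmetric gluing data on the exchanged full tunnels;
\item a re-presentation of the two $\tau$-fixed Seifert invariants by a compensating shift $b''_j\mapsto b''_j+t_ja''_j$, with $t_1=-t_n=\varepsilon_\lambda\mu$.
\end{itemize}
Your proposal asserts this extension without argument, and for the disk bundle it is false in general.

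\textbf{The mechanism.} The argument that gives equality is already present in your Step~2, but you invoke the wrong hypothesis.
\begin{itemize}
\item $b^+=0$ for $X$ yields a local map only from $X$. For $-X$ you also need the anti-invariant part of $H^-$ to vanish.
\item The paper proves $H^1(\check X;\R)^{-\wt\tau^*}=0$ and $H^2(\check X;\R)^{-\wt\tau^*}=0$. It shows $\wt\tau_*$ is the identity on $H_1(X_0;\Q)$ and $H_2(X_0;\Q)$, and the caps are rational balls.
\item Consequently, the $H=\langle -1\rangle$-fixed parts (not the $\Z_4$-fixed parts) of the real Bauer--Furuta maps of both $\check X$ and $-\check X$ are one-point compactifications of $0\to 0$. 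These are equivalences, so local equivalence in both directions follows at once.
\end{itemize}

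So no control of irreducible solutions is needed: no vanishing of the invariant Dirac kernel, no adiabatic or positive-scalar-curvature metric, no Mrowka--Ozsv\'ath--Yu input. Your Step~3 aims at an obstacle that does not exist in this route. As written it is a heuristic plan rather than a proof, so your argument does not close.

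What is actually missing is an orbifold that genuinely carries the odd spin involution extending $(\mathfrak{s},\widehat\tau)$, together with the check that $\wt\tau$ acts trivially on its rational $H_1$ and $H_2$. Incidentally, when $\mu=1$ your disk bundle does satisfy $H^2(X;\R)^{-\wt\tau^*}=0$, since $\tau$ preserves the oriented zero section. So the real obstruction to your approach is the spin extension.
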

Since the quantity $\frac{1}{2}w(X, \wt{\mathfrak{s}}) $ only depends on $( Y,\mathfrak{s}, \tau)$, we shall denote it by $w_R( Y,\mathfrak{s}, \tau)$ and call it {\it real $w$-invariant}.

Using the orbifold index theorem, we give a
concrete combinatorial formula for $w_R(Y,\mathfrak{s},\tau)$ in
\Cref{thm:explicit-w-formula}.
From this combinatorial formula, we can see $w_R( Y,\mathfrak{s}, \tau)$ coincides with $-\frac{1}{2}\bar{\mu}(Y)$ in many cases. The authors believe such equalities hold in general. 
 
\begin{rem}
    The same type of results hold for the case of general oriented base spaces with certain assumptions of spin structures on $Y$. See \cref{rem:positive-genus-case} for the details. The combinatorial formula corresponding to \cref{thm:explicit-w-formula} still works in this case. 
\end{rem}

Since the numerical cobordism invariants of $( Y,\mathfrak{s}, \tau)$ are determined by $   [SWF_R( Y,\mathfrak{s}, \tau)]_{\mathrm{loc}} $, we see 
\begin{cor}\label{thm:Quantities}
Under the hypotheses of \Cref{thm:main}, one has
\[
\kappa_R(Y,\mathfrak{s},\tau)
=
\overline{\delta}_R(Y,\mathfrak{s},\tau)
=
\underline{\delta}_R(Y,\mathfrak{s},\tau)
=
\delta_R(Y,\mathfrak{s},\tau)
=
w_R(Y,\mathfrak{s},\tau).
\]
\end{cor}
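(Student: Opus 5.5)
By \Cref{thm:main}, the local equivalence class of $SWF_R(Y,\mathfrak{s},\tau)$ equals that of the representation sphere $(\C_+^{m})^+$ with $m=-\frac12 w(X,\wt{\mathfrak{s}})=-w_R(Y,\mathfrak{s},\tau)$. Here a negative $m$ is interpreted as a formal desuspension in the stable category. I would not redo any gauge theory. Instead I will use that each of $\kappa_R,\overline{\delta}_R,\underline{\delta}_R,\delta_R$ is, by its construction in \cite{KMT21, KMT:2023, MPT25}, a function of the local equivalence class alone. Once that is granted, it suffices to compute these four invariants for the model spaces $(\C_+^{m})^+$, $m\in\frac{1}{16}\Z$, and check that each one returns $-m$ under the paper's normalization.

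The first step is to recall the definitions in the normalization the paper uses. The invariants $\delta_R,\underline{\delta}_R,\overline{\delta}_R$ are read off from the $\Z_4$-equivariant Borel cohomology $\widetilde H^*_{\Z_4}(-;\mathbb F_2)$, a module over $H^*(B\Z_4;\mathbb F_2)=\mathbb F_2[U,Q]/(Q^2)$. They measure the lowest degree in which the $U$-tower of the localized module survives, after restricting to the fixed point set or to an appropriate element. The invariant $\kappa_R$ is the analogous quantity defined in equivariant $K$-theory using the $\Z_4$-representation ring. Each is invariant under local equivalence. Each is also additive under smash with representation spheres: suspending by $\C_+$ shifts the grading by a fixed amount (real dimension $2$, hence level $1$ after the normalization by $\frac12$ built into the definitions). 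Together with the normalization $S^0\mapsto 0$, this additivity reduces everything to the trivial case.

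The second step is the computation itself. For $Z=(\C_+^{m})^+$ with $m$ a nonnegative integer, $Z$ is a sphere whose $\Z_4$-fixed point set is $S^0$, since $j$ acts freely away from the origin. The Thom isomorphism then gives $\widetilde H^*_{\Z_4}(Z)\cong H^*(B\Z_4)$ shifted by $2m$, with the Euler class of $\C_+^m$ a power of $U$. Hence the tower begins in degree $2m$, with no extra torsion that could separate the upper invariant from the lower one. Consequently $\delta_R=\underline{\delta}_R=\overline{\delta}_R$ all equal the same normalized shift. In equivariant $K$-theory, the Bott/Thom isomorphism shows likewise that the $K$-theoretic Euler class is $1-\lambda$ up to the appropriate power, so $\kappa_R$ records the same shift. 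For negative or fractional $m$, the stable formalism (formal desuspensions by the spinor-level shift) carries the same identity over. Matching signs, each invariant equals $-m=\frac12 w(X,\wt{\mathfrak{s}})=w_R(Y,\mathfrak{s},\tau)$.

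The main obstacle is bookkeeping of normalizations rather than any substantive argument. I need to confirm that the degree conventions and the $\frac{1}{16}\Z$ grading shifts in \cite{KMT21, KMT:2023} make suspension by one copy of $\C_+$ change each invariant by exactly $1$ with the correct sign. For $\kappa_R$ in particular, I must also check that no ambiguity from the $K$-theoretic definition (e.g. a $+1$ correction analogous to Manolescu's $\kappa$) appears for spheres. I would verify this first on $S^0$ and on $(\C_+)^+$ directly from the definitions, and then deduce the general case by the additivity property.
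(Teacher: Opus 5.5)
Your overall route is the paper's: the paper derives the corollary in one line from \Cref{thm:main} and the fact that $\kappa_R,\overline{\delta}_R,\underline{\delta}_R,\delta_R$ factor through $\mathcal{LE}_G$. It leaves implicit exactly the representation-sphere computation you sketch. The Borel cohomology of a $\C_+$-sphere is free on the Thom class, so the three Borel invariants coincide, and the $K$-theoretic Thom class gives the same value for $\kappa_R$.

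The one step that does not hold as written is the sign. You read $(\C_+^{m})^+$ as an honest $m$-fold suspension and correctly find that its $U$-tower starts in degree $2m$. But then $\delta_R$ and the other invariants equal $+m=-w_R$, not $-m$. The normalization of the invariants cannot absorb this flip. A local map $\Sigma^{q\C_+}S^0\to Z$ forces $\delta_R(Z)\geq q$; this monotonicity is what turns the Bauer--Furuta map into a Fr\o yshov-type inequality. So an honest $\C_+$-suspension must raise every invariant by $+1$.

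The sign is instead fixed by the notation. In $\mathfrak C_G$ the third entry of a triple is a formal \emph{desuspension}. This is forced by the normalization $\dim_{\C}(W^0_{-\lambda}(Y))^I+\frac{n}{2}$ in $SWF_R$, which is $\lambda$-independent only with that reading. Accordingly $(\C_+^{q})^+$ denotes the triple $(S^0,0,q)$. Hence $(\C_+^{-\frac12 w})^+$ is the honest suspension $\Sigma^{\frac12 w\,\C_+}S^0$. This agrees with the domain of the Bauer--Furuta map in \cref{thm:BF}, which is the suspension by the real Dirac index $\frac12\bigl(\operatorname{ind}^{\operatorname{orb}}_{\C}D_X^+-n(Y,\mathfrak{s},g_Y)\bigr)=\frac12 w$. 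Its tower starts in degree $w$, so all four invariants equal $\frac12 w=w_R(Y,\mathfrak{s},\tau)$. With this correction in place of the unjustified ``matching signs'', your argument is complete.
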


By applying \cref{thm:Quantities} to branched covers of the torus knot $T(p,q)$ of type $(p,q)$, we shall obtain: 

\begin{thm}\label{torus_computation}
Let $T(p,q)$ be a torus knot and let $k\geq1$. Then
\[
\begin{aligned}
\delta_R^{(k)}(T(p,q))
&=
\underline{\delta}_R^{(k)}(T(p,q))
=
\overline{\delta}_R^{(k)}(T(p,q)) = 
\kappa_R^{(k)}(T(p,q))
=
w_R\bigl(
\Sigma(2^k,p,q),\mathfrak{s}_k,\tau_k
\bigr),
\end{aligned}
\]
where $\Sigma(2^k,p,q)$ is the Brieskorn $3$--manifold which is the
$2^k$--fold cyclic branched cover of $T(p,q)$,
$\mathfrak{s}_k$ is its distinguished spin structure, and
$\tau_k$ is the order-two deck transformation.
\end{thm}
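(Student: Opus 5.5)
The plan is to reduce \Cref{torus_computation} to \Cref{thm:Quantities} by checking that the triple $(\Sigma_{2^k}(T(p,q)),\mathfrak{s}_k,\tau_k)$ satisfies the hypotheses of \Cref{thm:main}. By definition, $\delta_R^{(k)}(K)$ and the other three invariants in \eqref{list_conc_inv} are the corresponding invariants in \eqref{main_inv} of the pair $(\Sigma_{2^k}(K),\tau)$, where $\tau$ is the $2^{k-1}$-th power of the covering transformation and the spin structure is the unique one. So once the hypotheses are verified, the chain of equalities is just \Cref{thm:Quantities}.

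\textbf{Step 1 (Seifert structure).} I first identify $\Sigma_{2^k}(T(p,q))$ with the link of $x^{2^k}+y^p+z^q=0$, i.e. the Brieskorn manifold $\Sigma(2^k,p,q)$. This uses the standard fact that $T(p,q)$ is the link of $y^p+z^q=0$ and that projecting to the $(y,z)$ coordinates gives the $2^k$-fold cyclic cover branched along this link. The weighted $S^1$-action
\[
t\cdot(x,y,z)=(t^{pq/d}x,\,t^{2^kq/d}y,\,t^{2^kp/d}z)
\]
(with $d$ chosen to make the weights coprime) preserves the link. This action is a Seifert fibration whose base orbifold is $S^2$ with at most three cone points, so the base has genus $0$ and is oriented. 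The deck group is generated by $x\mapsto \zeta x$ with $\zeta=e^{2\pi i/2^k}$, so $\tau_k(x,y,z)=(-x,y,z)$. This visibly commutes with the $S^1$-action, since both act diagonally on the coordinates.

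\textbf{Step 2 (homological condition).} Since $\Sigma_{2^k}(K)$ is a branched cover of a knot in $S^3$ of prime-power order, it is a $\Z/2$-homology sphere. In particular $H^1(Y;\R)=0$, so \eqref{b_1_cond} holds trivially. (This is true even when $p$ or $q$ is even and $Y$ is not an integral homology sphere.) Because $Y$ is a $\Z/2$-homology sphere, $H^1(Y;\Z/2)=0$ and the spin structure is unique. Hence $\mathfrak{s}_k$ is automatically $\tau_k$-invariant.

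\textbf{Step 3 (oddness of $\tau_k$; the main obstacle).} I must show that a lift of $\tau_k$ to the spin structure has order $4$ rather than $2$. This is where I expect the real work. The approach is to use the general fact from \cite{KMT21, KMT:2023} that an involution with nonempty fixed set of codimension $2$ is odd. The fixed set of $\tau_k$ is $\{x=0\}\cap Y$, which is the branch locus, a copy of $T(p,q)$, and so has codimension $2$. The reason the lift has order $4$ is that near a fixed circle $\tau_k$ acts as a rotation by $\pi$ on the normal disk. The spin lift of a $\pi$-rotation squares to the lift of a $2\pi$-rotation, which is $-1$ in $\mathrm{Spin}$. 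If this argument is not available directly in the form needed, I would instead write the lift explicitly via the $S^1$-equivariant description of the spin bundle on the Brieskorn manifold. In that description $\tau_k$ is the action of an element of the Seifert $S^1$ (or of the $\mu_{2^k}$ symmetry), and one checks that its spin lift squares to $-1$.

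With the hypotheses verified, I apply \Cref{thm:Quantities} and conclude that all four invariants equal $w_R(\Sigma(2^k,p,q),\mathfrak{s}_k,\tau_k)$.
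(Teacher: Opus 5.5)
Your proposal is correct and is essentially the paper's argument. The paper derives this theorem in one line by applying \Cref{thm:Quantities} (hence \Cref{thm:main}) to $(\Sigma(2^k,p,q),\mathfrak{s}_k,\tau_k)$. Your Steps 1--3 just make the hypothesis check explicit: genus-$0$ base, $\tau_k(x,y,z)=(-x,y,z)$ commuting with the weighted circle action, $b_1=0$ because $Y$ is a $\Z/2$-homology sphere, and oddness from the codimension-$2$ fixed set.

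One small correction to Step 1: when $p$ or $q$ is even, the base can have more than three cone points. For example, when $p$ is even there are $\gcd(2^k,p)$ exceptional fibers of multiplicity $q$. The Neumann--Raymond genus formula still gives genus $\tfrac12(\gcd(2^k,p)-1)(\gcd(2^k,q)-1)=0$, which is all you actually use.
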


\begin{rem}
For torus knots $T(p,q)$ with $p$ and $q$ odd, the case $k=1$ was proved
in \cite{KMT:2023}.  Related results were also obtained in \cite{KPT24, KPT25}
under certain arithmetic restrictions on $k,p$, and $q$.  The methods in
these works are based on Mrowka--Ozsv\'ath--Yu's description of the
Seiberg--Witten moduli spaces of Seifert rational homology $3$--spheres
\cite{MOY96}, together with equivariant refinements \cite{KPT24, KPT25} of lattice homotopy type
introduced by Dai--Sasahira--Stoffregen \cite{DSS2023}.  In contrast, our
approach uses the real orbifold Bauer--Furuta invariant and does not rely on
these descriptions of the moduli spaces or on lattice homology.
\end{rem}

The explicit computations in \Cref{torus_computation} also reveal the
following periodicity phenomenon.

\begin{thm}\label{periodic}
Write $
        pq=2^{a_{p,q}}m_{p,q},
        \ 
        m_{p,q}\ \text{odd}.$
Let $d_{p,q}$ be the minimal positive integer such that \[
2^{d_{p,q}}\equiv 1 \pmod{m_{p,q}}.
\]
Then, for every $k\geq a_{p,q}+1$, we have
\[
    [     SWF_R^{(k)}(T(p,q))]_{\mathrm{loc}}
        =
        [SWF_R^{(k+d_{p,q})}(T(p,q))]_{\mathrm{loc}}.
\]
In particular, all numerical invariants in \eqref{list_conc_inv} are eventually periodic with the same period. 
\end{thm}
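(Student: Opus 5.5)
By \Cref{thm:main} and \Cref{torus_computation}, the local equivalence class $[SWF_R^{(k)}(T(p,q))]_{\mathrm{loc}}$ equals $[(\C_+^{-w_R(\Sigma(2^k,p,q),\mathfrak{s}_k,\tau_k)})^+]_{\mathrm{loc}}$. So it suffices to prove that the rational number $w_R(\Sigma(2^k,p,q),\mathfrak{s}_k,\tau_k)$ is periodic in $k$ with period $d_{p,q}$ once $k\geq a_{p,q}+1$. The statement about the numerical invariants then follows from \Cref{thm:Quantities}, since each of them is determined by the local equivalence class.

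To check this periodicity I will use the explicit combinatorial formula of \Cref{thm:explicit-w-formula}, specialized to the Brieskorn manifold $\Sigma(2^k,p,q)$ with Seifert invariants $(2^k,b_1),(p,b_2),(q,b_3)$. This formula has the shape of a sum of local contributions, one per singular fibre, and each contribution is a finite trigonometric or Dedekind-type sum depending on $(a_i,b_i)$ and on the action of $\tau_k$ near that fibre. The $b_i$ are determined by $b_1pq+b_2 2^k q+b_3 2^k p=1$. Modulo $p$ and modulo $q$ this gives
\[
b_2\equiv (2^k q)^{-1}\pmod p,\qquad b_3\equiv (2^kp)^{-1}\pmod q .
\]
These residues depend on $k$ only through $2^k$ modulo $p$ and modulo $q$. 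Replacing $k$ by $k+d_{p,q}$ multiplies $2^k$ by a number $\equiv 1 \pmod{m_{p,q}}$, so the contributions of the fibres of orders $p$ and $q$ are unchanged. This step uses that the $2$-parts of $p$ and $q$ are handled by $k\geq a_{p,q}+1$; if $p$ or $q$ is even, the odd part is what matters for the invertibility of $2^k$.

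The main obstacle is the contribution of the fibre of order $2^k$, together with any global normalization terms such as $e(Y)$, the term $\sum 1/a_i$, or the sign and $\eta$-type terms coming from the orbifold index theorem. Here the order itself grows with $k$. For this fibre I would first check that, for an odd involution, only the $\tau$-twisted part of the local index contributes. This amounts to summing over the fixed points of $\tau_k=$ ($2^{k-1}$-th power of the deck action), which restricts the character sum at the $2^k$-point to the index-$2$ coset and collapses it to a bounded expression in $b_1 \bmod$ small data and $pq/2^k$-type corrections. Concretely, I would express that contribution through the reciprocity law for Dedekind sums, $s(pq,2^k)+s(2^k,pq)=\dots$. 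This trades $s(pq,2^k)$ for $s(2^k\bmod pq,\,pq)$ plus explicit rational terms in $2^k$ and $pq$. The condition $k\geq a_{p,q}+1$ is exactly what makes $2^k \bmod pq$ periodic with period $d_{p,q}$: it ensures $2^k\equiv 0$ on the $2$-part and is periodic on the odd part. I would then verify that the explicit non-periodic terms, such as $2^k/pq$ and $pq/2^k$, cancel against the global normalization terms, as they must because $w_R$ is bounded, for instance by comparison with $\bar\mu$-type bounds. Checking this cancellation is the step I expect to require the most care.
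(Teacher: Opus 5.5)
\textbf{There is a genuine gap.} Your reduction to periodicity of the single number $w_R(\Sigma(2^k,p,q),\mathfrak{s}_k,\tau_k)$ is logically fine, given \Cref{thm:main} and \Cref{torus_computation}. The argument you sketch for that periodicity, however, does not work, for four reasons.

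\emph{The formula is not a sum over Seifert fibres.} The formula of \Cref{thm:explicit-w-formula} is not a sum of local terms indexed by Seifert fibres. Its orbifold points are the cone points of the caps $C(\wt a_i,\wt b_i)$, and $\wt a_i=-a_ib'_i-a'_ib_i$ mixes the data of two paired fibres. There is also the global term $\sign(Q)$. So knowing $b_2\bmod p$ and $b_3\bmod q$ does not show that any contribution is unchanged. Once the fibre of order $2^k$ is paired with another fibre, the corresponding cap data depend on $2^k$ and $b_1$ themselves, not merely on residues modulo $p$ and $q$.

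\emph{There is no localization.} $w_R=\frac12 w$, where $w$ is the ordinary, untwisted orbifold Dirac index. Nothing localizes to fixed points of $\tau_k$ or collapses a character sum to an index-two coset.

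\emph{The decisive step is only asserted.} The cancellation of the non-periodic terms is what you would need to prove, and you do not prove it. Boundedness of $w_R$ would not give periodicity in any case.

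\emph{The setup fails in the even case.} When $p$ or $q$ is even, which is the only case where the threshold $k\geq a_{p,q}+1$ is non-trivial, $\Sigma(2^k,p,q)$ is not a homology sphere. The equation $b_1pq+b_22^kq+b_32^kp=1$ has no solution, since its left side is even, and $(2^kq)^{-1}\bmod p$ does not exist.

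\textbf{The missing idea.} No computation of $w$ is needed. The periodicity is topological, but it becomes visible only after passing from the branched covers to the cyclic covers of the zero-surgery.
\begin{itemize}
\item By the $0$--surgery invariance of \cite{MPT25}, $[SWF_R^{(k)}(T(p,q))]_{\mathrm{loc}}=[SWF_R^{(k)}(Y)]_{\mathrm{loc}}$ with $Y=S^3_0(T(p,q))$.
\item $Y$ is the mapping torus $M_{\bar\psi}$ of the capped-off Milnor monodromy $\bar\psi$, which is periodic of order $pq$. Hence the $2^j$--fold cyclic cover is $Y_j\cong M_{\bar\psi^{2^j}}$.
\item Since $Y_k\to Y_{k-1}$ is the double cover defining the first real invariant of $Y_{k-1}$, one has $[SWF_R^{(k)}(Y)]_{\mathrm{loc}}=[SWF_R^{(1)}(Y_{k-1})]_{\mathrm{loc}}$.
\item When $k-1\geq a_{p,q}$, the integer $2^{k-1}(2^{d_{p,q}}-1)$ is divisible by $pq$, so $\bar\psi^{2^{k+d_{p,q}-1}}=\bar\psi^{2^{k-1}}$. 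Therefore $Y_{k+d_{p,q}-1}\cong Y_{k-1}$, compatibly with the double covers, which proves the claim.
\end{itemize}
This also explains the threshold: it is governed by $2^{k-1}\bmod pq$, not by $2^k\bmod pq$ as in your heuristic.
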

The proof of \cref{periodic} relies on 0-surgery invariance of real Floer homotopy type proven in \cite{MPT25}. Floer-theoretic concordance invariants obtained from branched covers
have previously been studied in
\cite{MO07,JN07, Jab12,HLS16, ACS21,AKS20,BH21,Baraglia24, IT24}.  To the best of our knowledge, the
eventual periodicity of the entire local equivalence class as the
covering exponent varies is new. 
Finally, using explicit computations obtained via \cref{torus_computation}, we shall see: 

\begin{thm}\label{thm:integral-linear-independence-delta}
The integer-valued homomorphisms
\[
8\delta_R^{(k)}
\colon
\mathcal C\longrightarrow\Z
\]
are linearly independent over $\Z$.
\end{thm}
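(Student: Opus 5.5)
Since each $8\delta_R^{(k)}$ is a homomorphism $\mathcal C\to\Z$ (integrality by \cref{prop:eighth-integrality-knot-invariants}), linear independence over $\Z$ follows from a triangularity argument. For every $N$, I will exhibit knots $K_1,\dots,K_N$ such that the $N\times N$ matrix $A_{jk}=8\delta_R^{(k)}(K_j)$ is nonsingular. Then any relation $\sum_k c_k\,8\delta_R^{(k)}=0$ evaluated on $K_1,\dots,K_N$ gives $A c=0$, hence $c=0$. Since a linear relation involves only finitely many $k$, this suffices. The natural test knots are torus knots, because \cref{torus_computation} gives
\[
8\delta_R^{(k)}(T(p,q)) = 8\,w_R\bigl(\Sigma(2^k,p,q),\mathfrak{s}_k,\tau_k\bigr),
\]
and the combinatorial formula of \Cref{thm:explicit-w-formula} makes the right-hand side explicitly computable.

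The plan is to make $A$ triangular. I will pick odd coprime $p,q$ and exploit the dependence of $w_R(\Sigma(2^k,p,q))$ on $k$. By the expected relation $w_R=-\tfrac12\bar\mu$, which holds in many cases according to the formula, these values are Neumann--Siebenmann type sums over the exceptional fibers of $\Sigma(2^k,p,q)$. They involve Dedekind-type sums $s(2^k\cdot\text{stuff}, p)$ and $s(\cdot,q)$, and these depend on $2^k$ modulo $p$ and $q$. One concrete choice is the family $K_j=T(2^j-1,2^j+1)$ or a similar family of torus knots. Another is to use differences such as $T(p,q)\#-T(p',q')$ chosen so that the invariants vanish for $k<j$ and are nonzero at $k=j$.

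A cleaner route may come from \cref{periodic}. For a fixed torus knot with $m=pq$ odd, the sequence $k\mapsto\delta_R^{(k)}$ is periodic from $k=1$ with period $d_{p,q}=\mathrm{ord}_m(2)$. One can choose primes $p$ for which $\mathrm{ord}_p(2)$ is large and pairwise distinct, and then show that the resulting periodic integer sequences are linearly independent as functions of $k$. Concretely, I will use a Vandermonde or discrete-Fourier argument. Periodic sequences with distinct minimal periods $d_1<d_2<\dots$ that are not sums of sequences of smaller period are independent. This reduces to checking that each sequence has a nonzero "primitive" Fourier component at frequency $1/d_j$.

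The main obstacle is this nondegeneracy check: verifying from \Cref{thm:explicit-w-formula} that the sequences genuinely vary with $k$ in the needed way. I will first try small explicit examples, for instance $T(3,5)$, $T(3,7)$, and $T(5,7)$. For these I will compute $8w_R$ for $k=1,\dots,N$ directly from the formula and look for a family where $\delta_R^{(k)}(K_j)=0$ for $k>j$ after taking suitable linear combinations. I will then prove the general pattern using the reciprocity law for Dedekind sums to control the leading term.
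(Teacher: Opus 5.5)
There is a genuine gap. Your reduction is sound, and it is also the skeleton of the paper's argument: a relation involves only finitely many $k$, so it suffices to find, for each $N$, test knots on which the evaluation matrix of $8\delta_R^{(1)},\dots,8\delta_R^{(N)}$ has rank $N$. But the proposal stops exactly where the content of the theorem begins.

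You never fix a family for which $\delta_R^{(k)}$ is actually computed for all $k\le N$, and you explicitly postpone the nondegeneracy check. Appealing to $w_R=-\frac12\bar\mu$ does not help, since the paper only conjectures this in general.

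Moreover, the periodicity/Fourier route aims at the wrong kind of independence. Showing that the sequences $k\mapsto 8\delta_R^{(k)}(K_j)$ are linearly independent (distinct minimal periods, nonzero primitive Fourier components) is independence of the \emph{rows} of the evaluation matrix. What you need is independence of the \emph{columns}: for each $N$, the vectors $\bigl(8\delta_R^{(1)}(K),\dots,8\delta_R^{(N)}(K)\bigr)$ must span $\Q^N$ as $K$ varies. Infinitely many independent sequences can all satisfy one fixed linear constraint on their first $N$ terms (e.g.\ all vanish at $k=1$). So that argument cannot rule out a relation $\sum_{k\le N}c_k\delta_R^{(k)}=0$.

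What is missing is an explicit computation such as \cref{lem:delta-values-two-bridge-torus}. The paper takes $K_n=T(2,2^n-1)$ and sets $r=2^n-1$. For $2\le k\le n$, the closed monodromy $\overline\psi(x,y)=(\zeta_rx,-y)$ gives $Y_k\cong M_{\overline\psi^{\,2^k}}\cong M(0;(r,-u),(r,-u),(r,2u))$ with $u=2^{n-k}$, and $\tau_k$ is of type C. The continued-fraction form of the $w$-formula then yields $8\delta_R^{(k)}(K_n)=-1$ for $2\le k\le n-1$. At $k=n$, where $u=1$ requires a different Seifert presentation and spin parameters, it yields $8\delta_R^{(n)}(K_n)=2^n-1$. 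Separately, $8\delta_R^{(1)}(K_n)=2^{n-1}-1$ follows from $\delta_R^{(1)}=-\sigma/16$.

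Hence $D_n=K_n\mathbin{\#}-K_{n+1}$ satisfies $8\delta_R^{(1)}(D_n)=-2^{n-1}$, $8\delta_R^{(k)}(D_n)=0$ for $2\le k<n$, and $8\delta_R^{(n)}(D_n)=2^n$. Evaluating a relation on $D_{N+1}$ kills $c_1$, and descending induction on $D_j$ kills the rest. Only values with $k\le n$ are needed, so no periodicity or control of large $k$ enters. Your remark about differences of torus knots vanishing for $k<j$ points in this direction, but it needs a concrete family and a concrete evaluation of the $w$-invariant to become a proof.
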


\subsection*{Acknowledgements}

We dedicate this paper to the memory of Mikio Furuta.
The authors would like to thank Kimihiko Motegi and Tetsuya Ito for very helpful
discussions on Seifert invariants of 0-surgeries of torus knots. The second author was partially supported by JSPS KAKENHI Grant Number 22K13921. The authors used generative AI tools for language editing and proofreading.

\section{Preliminaries}

\subsection{Notations of Seifert fibrations}
For fundamental notions of orbifolds used in this paper, see \Cref{Apx:orbifold}. 
Let $\Sigma_g$ be a closed oriented surface of genus $g$. Fix pairwise disjoint disks $U_1,\ldots,U_n \subset \Sigma_g$ and put
$$S_0:=\Sigma_g\setminus \coprod_{i=1}^n \mathring{U}_i.$$
Fix a tuple of non-zero integers $(a_1,\ldots,a_n)$. For each $i$, let $\mathbb{Z}_{a_i}$ denote the cyclic group of order $|a_i|$, acting on $D^2\subset \mathbb{C}$ by $\zeta_i\cdot z=\zeta_i z,$
where $\zeta_i$ is a primitive $|a_i|$-th root of unity. We define the closed oriented orbifold surface
$$\check{\Sigma}=\check{\Sigma}(g;a_1,\ldots,a_n)$$
by
$$\check{\Sigma}=S_0\cup_{\{\bar{\varphi}_i\}}\coprod_{i=1}^n D^2/\mathbb{Z}_{a_i},$$
where the gluing maps are given by
$$\bar{\varphi}_i:\partial D^2/\mathbb{Z}_{a_i}\to \partial U_i,\qquad \bar{\varphi}_i([z])=z^{a_i}.$$

Fix integers $\{b_i\}$ such that $a_i$ and $b_i$ are relatively prime.
Let 
\[
\pi : L = L((a_1,b_1),\ldots,(a_n,b_n)) \to \check{\Sigma}
\]
be an orbifold complex line bundle over $\check{\Sigma} = \check{\Sigma}(g; a_1,\ldots,a_n)$ defined to be
\begin{eqnarray*}
    && L = S_0 \times \mathbb{C} \cup_{\{\varphi_i\}} \coprod_{i=1}^n (D_i \times \mathbb{C})/\mathbb{Z}_{a_i},
\end{eqnarray*}
where $\mathbb{Z}_{a_i}$ acts on $D_i\times D^2 \subset \mathbb{C} \times \mathbb{C}= \mathbb{C}^2$ by 
$\zeta_i \cdot (z,w) = (\zeta_i z ,\zeta_i^{b_i} w)$
and the gluing maps are given by 
$$\varphi_i : (\partial D_i \times \mathbb{C})/\mathbb{Z}_{a_i} \to \partial U_i \times \mathbb{C}, ~
\varphi_i([(z,w)]) = (z^{a_i},z^{-b_i}w).$$
Then $s = \{s_0, s_i\}$, $s_0(z) = 1$ for $z \in S_0$ and $s_i([z]) = [z,v_i(z)]$ for $[z] \in D_i/\mathbb{Z}_{a_i}$, where $v_i(z) = z^{b_i}$ for $b_i \geq 0$ and $v_i(z) = (\rho(|z|) \bar{z})^{-b_i}$ for $b_i < 0$ with a strictly positive function  $\rho \in C^{\infty}(\mathbb{R}_{\geq 0})$ satisfying $\rho(r) = 1$ for $0 \leq r \leq 1/3$ and $\rho(r) = 1/r^2$ for $r \geq 2/3$, gives a section $s : S \to L$ of $L$ and the rational Euler number of $\pi : L \to S$ is calculated to be 
$e(L) = \sum_{i=1}^n \frac{b_i}{a_i}.$

The following proposition describes the classification of orbifold line bundles over closed oriented orbifold surfaces. Let $\check{\Sigma}=\check{\Sigma}(g;a_1,\ldots,a_n)$ be a closed oriented orbifold surface. We denote by
$$
\mathrm{Pic}^t(\check{\Sigma})
=
\{L\mid L\to \check{\Sigma}\text{ is a smooth orbifold complex line bundle}\}/\cong
$$
the set of isomorphism classes of smooth orbifold complex line bundles over $\check{\Sigma}$.
We have the following classification proven by Furuta--Steer \cite{FurutaSteer}: 
\begin{prop}[\cite{FurutaSteer}]\label{prop:Pic-orbifold-surface}
Let $\check{\Sigma}=\check{\Sigma}(g;a_1,\ldots,a_n)$ be a closed oriented orbifold surface with $n\geq 1$. Then any element of $\mathrm{Pic}^t(\check{\Sigma})$ is represented by an orbifold line bundle of the form
$$
L((a_1,b_1),\ldots,(a_n,b_n))
$$
for some integers $b_i$. Moreover, the map
$$
\Theta_{\check{S}}:
\mathrm{Pic}^t(\check{\Sigma})
\to
\Z_{a_1}\oplus\cdots\oplus \Z_{a_n}\oplus \Q
$$
given by
$$
[L((a_1,b_1),\ldots,(a_n,b_n))]
\mapsto
(b_1\operatorname{mod} a_1,\ldots,b_n\operatorname{mod} a_n,e(L))
$$
is injective. Its image consists of the tuples
$
(r_1,\ldots,r_n,e)
\in
\Z_{a_1}\oplus\cdots\oplus\Z_{a_n}\oplus\Q
$
satisfying
$$
e-\sum_{i=1}^n\frac{\widetilde r_i}{a_i}\in\Z,
$$
where $\widetilde r_i\in\Z$ is any lift of $r_i\in\Z_{a_i}$.
\end{prop}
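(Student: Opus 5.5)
We prove the classification of orbifold line bundles over $\check{\Sigma}$ by comparing local isotropy data with the rational Euler number. The plan has three steps: show every bundle is of the standard form, show the invariants $(b_i \bmod a_i, e)$ determine the bundle, and identify the image.

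\textbf{Step 1: every bundle has the standard form.} Let $L\to\check{\Sigma}$ be a smooth orbifold line bundle. Over each orbifold chart $D^2/\Z_{a_i}$, $L$ is given by a $\Z_{a_i}$-equivariant line bundle on $D^2$. Since $D^2$ is contractible, this bundle is equivariantly trivial, so it is $D^2\times\C$ with $\zeta_i$ acting by $(z,w)\mapsto(\zeta_i z,\zeta_i^{b_i}w)$ for some $b_i$ determined modulo $a_i$. Over $S_0$ the bundle is an ordinary line bundle. Since $n\ge1$, $S_0$ has nonempty boundary and is homotopy equivalent to a wedge of circles, so $L|_{S_0}\cong S_0\times\C$. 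The transition maps on the annuli $\partial U_i\times\C$ are then maps $\partial U_i\to U(1)$, classified by their degrees. Changing $b_i$ to $b_i+a_i m_i$ corresponds to the equivariant gauge change $w\mapsto z^{a_im_i}w$ on $D_i$, and this shifts the degree of the $i$-th clutching map. So all clutching degrees can be absorbed into the integer lifts $b_i$, with the standard gluing $\varphi_i([(z,w)])=(z^{a_i},z^{-b_i}w)$. Hence $L\cong L((a_1,b_1),\dots,(a_n,b_n))$ for suitable integers $b_i$.

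\textbf{Step 2: the map $\Theta_{\check{S}}$ is well defined and injective.} The residue $b_i \bmod a_i$ is the isotropy representation at the $i$-th cone point, so it is an isomorphism invariant. The Euler number $e(L)=\sum_i b_i/a_i$ is computed from the section $s$ given in the text and is a Chern–Weil invariant, so it is also an isomorphism invariant; this makes $\Theta_{\check{S}}$ well defined.

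For injectivity, suppose two bundles $L((a_i,b_i))$ and $L((a_i,b_i'))$ have the same image. Then $b_i'=b_i+a_im_i$ with $\sum_i m_i=0$. Applying the gauge changes $z^{a_im_i}$ on the disks, we reduce to comparing two gluings that differ on $\partial U_i$ by maps of degree $m_i$ with $\sum_i m_i=0$. Such a collection of degrees extends to a map $S_0\to U(1)$, since the obstruction is exactly $\sum_i m_i$ in $H^1$ relative to the boundary components. This gives an isomorphism between the two bundles.

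\textbf{Step 3: the image.} For any $(r_1,\dots,r_n,e)$, we have $e-\sum_i\tilde r_i/a_i=\sum_i(b_i-\tilde r_i)/a_i$, which lies in $\Z$ because each $b_i\equiv\tilde r_i \pmod{a_i}$. So every element of the image satisfies the stated condition. Conversely, given a tuple satisfying the condition, put $N:=e-\sum_i\tilde r_i/a_i\in\Z$. Taking $b_i=\tilde r_i$ for $i\ge2$ and $b_1=\tilde r_1+a_1N$ realizes the tuple.

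The main obstacle is Step 1: one must carefully keep track of how the clutching degrees on the boundary annuli interact with the choice of integer lifts $b_i$. The remaining steps are bookkeeping once that correspondence is in place.
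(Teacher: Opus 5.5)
Your argument is correct. Note that the paper gives no proof of this proposition. It quotes the classification from \cite{FurutaSteer} and uses it as a black box, most visibly in \cref{classify_over}, where the pull-back $p^*$ is computed on the invariants $\Theta$. Your clutching argument is therefore a self-contained, elementary proof rather than a variant of one in the paper. Its advantage is that it works directly with the explicit models $L((a_1,b_1),\ldots,(a_n,b_n))$ and the gluing maps $\varphi_i$ that the paper fixes. The single observation that replacing $b_i$ by $b_i+a_im_i$ amounts to composing $\varphi_i$ with multiplication by $u^{-m_i}$ on $\partial U_i\times\C$ gives both the reduction to standard models and injectivity. Citing Furuta--Steer instead gives the result in their intrinsic formulation without redoing this bookkeeping.

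A few points should be tightened.
\begin{enumerate}
\item The map $w\mapsto z^{a_im_i}w$ is not a gauge transformation over the whole chart $D_i$, since it vanishes or has a pole at the cone point. It is only an equivariant automorphism over $\partial D_i$, where it descends to multiplication by $u^{m_i}$ on $\partial U_i\times\C$, and that is all you actually use.
\item Equivariant triviality of the bundle over $D^2$ follows from the equivariant radial contraction onto the fixed point $0$, not from mere contractibility.
\item The obstruction to extending prescribed boundary degrees $(m_i)$ to a map $S_0\to U(1)$ is the image of $(m_i)$ under the coboundary $H^1(\partial S_0;\Z)\to H^2(S_0,\partial S_0;\Z)\cong\Z$. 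This image is $\sum_i m_i$, with all $\partial U_i$ oriented alike, and it does not live in a relative $H^1$ as you wrote.
\item Step~3, and the statement itself, require allowing $b_i$ not coprime to $a_i$; for example, the trivial bundle has invariants $(0,\ldots,0,0)$. So the standing coprimality assumption imposed earlier in the paper must be dropped here. Your construction does this implicitly, which is right, but it is worth saying explicitly.
\end{enumerate}
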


We put $Y := S(L)$, the sphere bundle of $L$. The following is the homological computation of $Y$ described by its Seifert invariant and genus.  
The following is proven by Mayer--Vietoris argument: 
\begin{prop}\label{prop:H1-Seifert}
Let
$$
Y=M(g;(a_1,b_1),\ldots,(a_n,b_n))
$$
be the circle bundle associated with the orbifold line bundle
$$
L((a_1,b_1),\ldots,(a_n,b_n))\to \check{\Sigma}(g;a_1,\ldots,a_n).
$$
For each singular fiber, let $\gamma_i$ denote the class of a small meridian of the $i$-th singular fiber, and let $\delta$ denote the class of a regular fiber. We also choose classes
$
\beta_1,\ldots,\beta_{2g}
$
whose projections to the underlying surface $\Sigma_g$ form a symplectic basis of $H_1(\Sigma_g;\mathbb{Z})$. Then
$$
H_1(Y;\mathbb{Z})
\cong
\left\langle
\gamma_1,\ldots,\gamma_n,\delta,\beta_1,\ldots,\beta_{2g}
\left|
\sum_{i=1}^n\gamma_i=0,\quad
a_i\gamma_i-b_i\delta=0\;\;(i=1,\ldots,n)
\right.
\right\rangle .
$$
In particular, the generators $\beta_1,\ldots,\beta_{2g}$ give a free direct summand.

Let $R$ be the relation matrix for the part generated by
$
\gamma_1,\ldots,\gamma_n,\delta .
$
Then
$$
\det R
=
\pm \alpha e
=
\pm \sum_{i=1}^n b_i a_1\cdots \widehat{a_i}\cdots a_n,
\qquad
\alpha=\prod_{i=1}^n a_i,\qquad
e=\sum_{i=1}^n\frac{b_i}{a_i}.
$$
Consequently, $M(g;(a_1,b_1),\ldots,(a_n,b_n))$ is a $\mathbb{Z}_m$-homology $3$-sphere if and only if
$
g=0
$
and $\det R$ is invertible in $\mathbb{Z}_m$. 
\end{prop}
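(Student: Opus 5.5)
We decompose $Y$ with the Seifert structure. Write $\Sigma_g = S_0 \cup \coprod_i U_i$. Over $S_0$ the bundle $S(L)$ is trivial, so $Y_0 := \pi^{-1}(S_0)\cong S_0\times S^1$. Over each $D_i/\Z_{a_i}$ the preimage is a solid torus $V_i=(D^2\times S^1)/\Z_{a_i}$.

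Since $S_0$ is $\Sigma_g$ minus $n$ disks, it deformation retracts onto a wedge of circles. Its first homology is generated by $\beta_1,\dots,\beta_{2g}$ together with the boundary circles $\partial U_1,\dots,\partial U_n$. The only relation is $\sum_i[\partial U_i]=0$ (for the appropriate boundary orientation), since $\partial S_0=\coprod\partial U_i$ is null-homologous in $S_0$. Hence
$$H_1(Y_0)=\Z\langle\beta_j\rangle\oplus\Z\langle\gamma_i\rangle/(\textstyle\sum\gamma_i)\oplus\Z\delta,$$
where $\gamma_i$ is the section $s_0$ restricted to $\partial U_i$ and $\delta$ is the fiber. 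In our setup $\gamma_i$ is literally the boundary of the global section $s_0$, which is what makes the relation exact.

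Next we glue the solid tori by Mayer--Vietoris. We have $H_1(\partial V_i)=\Z^2$, and $H_1(V_i)\cong\Z$ is generated by the core of $D^2\times S^1$ mod $\Z_{a_i}$. Gluing $V_i$ kills exactly the meridian class of $V_i$. To find that meridian, take $\{z\}\times\{w: |w|=1\}$ on $\partial D_i\times S^1$ and push it down through $\varphi_i([(z,w)])=(z^{a_i},z^{-b_i}w)$, using the local coordinates on $\partial U_i\times S^1$ in which $\gamma_i$ is the horizontal circle and $\delta$ is the fiber. Concretely, the meridian disk $\{(re^{i\theta},1)\}$ for $\theta\in[0,2\pi/a_i]$ is a fundamental domain in the quotient. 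Its boundary loop $[(e^{i\theta},1)]$ maps to $(e^{ia_i\theta},e^{-ib_i\theta})$. As $\theta$ runs over $[0,2\pi]$ in the cover, which is $a_i$ times the fundamental domain, the image travels once around $\partial U_i$ and $-b_i$ times along the fiber. So the killed class is $a_i\gamma_i-b_i\delta$ up to the sign convention for $\gamma_i$; the sign is absorbed into the orientation of $\gamma_i$, matching the stated relation.

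Mayer--Vietoris with connected intersections then gives $H_1(Y)=H_1(Y_0)/\langle a_i\gamma_i-b_i\delta\rangle$. This uses two facts: the inclusion $H_1(\partial V_i)\to H_1(V_i)$ is surjective, and the cores contribute nothing new beyond $\gamma_i,\delta$ modulo the killed class. Together with the free $\beta$ summand, which is untouched by every relation, this is exactly the presentation in the statement.

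For the determinant, the relation matrix $R$ is the $(n+1)\times(n+1)$ matrix in the unknowns $\gamma_1,\dots,\gamma_n,\delta$. Its rows are $(1,\dots,1,0)$ and, for each $i$, the row with $a_i$ in column $i$ and $-b_i$ in the last column. To compute $\det R$, clear the first row: subtract $1/a_i$ times row $i$, working over $\Q$ so the determinant is preserved. This turns the first row into $(0,\dots,0,\sum b_i/a_i)$. Expanding, $\det R=\pm(\prod a_i)\sum b_i/a_i=\pm\alpha e$.

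Finally, $H_1(Y;\Z)$ is the free part $\Z^{2g}$ plus $\mathrm{coker}\,R$, and $|\mathrm{coker}\,R|=|\det R|$ when $\det R\neq0$. Hence $H_*(Y;\Z_m)\cong H_*(S^3;\Z_m)$ if and only if $H_1(Y;\Z_m)=0$, using Poincaré duality and the universal coefficient theorem. That holds precisely when $g=0$ and $\det R$ is a unit in $\Z_m$: for square $R$, $\mathrm{coker}(R)\otimes\Z_m=0$ if and only if $\det R$ is invertible mod $m$.

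The main obstacle is the careful sign and orientation bookkeeping for the gluing map, to get exactly $a_i\gamma_i-b_i\delta$. The rest is routine.
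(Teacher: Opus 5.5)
Your route is the argument the paper intends; the paper says only ``Mayer--Vietoris''. You decompose $Y$ as $\pi^{-1}(S_0)\cong S_0\times S^1$ together with the solid tori $V_i$ and run Mayer--Vietoris. Several steps are correct: the computation of $H_1(S_0\times S^1)$, the pushout step using surjectivity of $H_1(\partial V_i)\to H_1(V_i)$, the row reduction giving $\det R=\pm\alpha e$, and the $\Z_m$ criterion. The problem is the step you yourself call the crux. The meridian computation, as written, is wrong and does not support the relation you state. The sector $\{(re^{i\theta},1):0\le\theta\le 2\pi/a_i\}$ is not a meridian disk, since its boundary arc on $\partial V_i$ does not close up in the quotient. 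Pushing down $\{z\}\times S^1$ gives only a regular fiber. And if the boundary image really went once around $\partial U_i$ and $-b_i$ times along the fiber, the killed class would be $\gamma_i-b_i\delta$, not $a_i\gamma_i-b_i\delta$.

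The correct version runs as follows. The whole disk $D_i\times\{1\}$ embeds in $V_i=(D_i\times S^1)/\Z_{a_i}$, because its translates are $D_i\times\{\zeta_i^{kb_i}\}$ and $\zeta_i^{kb_i}\neq1$ for $0<k<|a_i|$ since $\gcd(a_i,b_i)=1$. So it is a meridian disk. Its boundary $\theta\mapsto(e^{i\theta},1)$, $0\le\theta\le2\pi$, is sent by $\varphi_i$ to $(e^{ia_i\theta},e^{-ib_i\theta})$. This winds $a_i$ times around $\partial U_i$ and $-b_i$ times around the fiber, which gives $a_i\gamma_i-b_i\delta$.

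Also, signs cannot be ``absorbed'' one component at a time. Reversing a single $\gamma_i$ also reverses its sign in $\sum_i\gamma_i=0$, so the stated presentation is correct only if both families of relations hold for one common orientation of the $\gamma_i$. Orient every $\gamma_i=\partial U_i\times\{1\}$ as the boundary of $U_i$, and orient $\delta$ by the global trivialization over $S_0$. Then $\partial S_0=-\sum_i\gamma_i$ gives $\sum_i\gamma_i=0$, and the meridian gives $a_i\gamma_i-b_i\delta=0$, with exactly the stated signs. With this repair your proof is complete.
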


\subsection{Classifications of involutions}
Let $Y$ be an oriented Seifert $3$--manifold with Seifert fibration
$$
S^1 \to Y \xrightarrow{\pi} \check{\Sigma}(g;a_1,\ldots,a_n).
$$
We first record the following elementary consequence.

\begin{lem}\label{classify_under}
Let $\tau:Y\to Y$ be an orientation-preserving involution which commutes with the Seifert $S^1$--action. Assume that
$$
\tau^*=\id:H^1(Y;\mathbb{R})\to H^1(Y;\mathbb{R}).
$$
Let $\overline{\tau}$ be the induced involution on $\check{\Sigma}$, and let $\underline{\tau}$ be the induced involution on the underlying smooth surface $\Sigma_g$. Then $\underline{\tau}:\Sigma_g\to \Sigma_g$ is conjugate to one of the following maps:
\begin{itemize}
    \item if $g=0$, either $\id:S^2\to S^2$ or the $180$-degree rotation about the $z$-axis;
    \item if $g=1$, either $\id:T^2\to T^2$ or a non-trivial translation of order two, for example
    $$
    (x,y)\mapsto \left(x+\frac{1}{2},y\right):S^1\times S^1\to S^1\times S^1;
    $$
    \item if $g\geq 2$, the identity map $\id:\Sigma_g\to\Sigma_g$.
\end{itemize}
\end{lem}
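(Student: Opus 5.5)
The plan is to push the cohomological hypothesis down to the base surface and then invoke the classification of finite-order surface homeomorphisms. Since $\tau$ commutes with the $S^1$--action, it descends to an orbifold involution $\overline{\tau}$ of $\check{\Sigma}$, and hence to a homeomorphism $\underline{\tau}$ of $\Sigma_g$ with $\underline{\tau}^2=\id$. Because $\tau$ preserves orientation and commutes with the circle action, it preserves the fiber orientation or reverses it; correspondingly $\underline{\tau}$ preserves or reverses the orientation of $\Sigma_g$. I will treat the two cases separately.

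\textbf{Step 1: $\underline{\tau}$ acts trivially on $H^1(\Sigma_g;\R)$.} The projection $Y\to\Sigma_g$ (through $\check{\Sigma}$) induces an injection $H^1(\Sigma_g;\R)\hookrightarrow H^1(Y;\R)$, by \cref{prop:H1-Seifert}: the classes $\beta_1,\dots,\beta_{2g}$ span a free summand mapping isomorphically onto $H_1(\Sigma_g)$. This injection is equivariant with respect to $\tau$ and $\underline{\tau}$. Therefore the hypothesis $\tau^*=\id$ forces $\underline{\tau}^*=\id$ on $H^1(\Sigma_g;\R)$.
\begin{itemize}
\item If $\underline{\tau}$ reversed orientation and $g\ge1$, it would act by $-1$ on $H^2$. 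Since the cup product pairing is nondegenerate, $\underline{\tau}^*$ could not then be the identity on $H^1$, so this case is ruled out.
\item If $g=0$, an orientation-reversing involution of $S^2$ would be a reflection or the antipodal map.
\end{itemize}
The $g=0$ reflection case needs a separate argument. Here I would use that $\tau$ reversing the base orientation must also reverse the fiber orientation (to preserve the total orientation), and then check against the list in the statement. I expect the intended reading of the lemma is that these cases are excluded, for instance by the Euler number $e$ changing sign under such a $\tau$ so that the case forces $e=0$. I will flag this as the delicate point. Otherwise I restrict to orientation-preserving $\underline{\tau}$, as the listed maps suggest.

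\textbf{Step 2: classify the orientation-preserving involutions.} An orientation-preserving involution of $\Sigma_g$ is conjugate to an isometric one, and its quotient is an orbifold. By Riemann--Hurwitz, if $\underline{\tau}\ne\id$ with $r$ fixed points and quotient genus $h$,
\[
2-2g=2(2-2h)-r.
\]
The trace of $\underline{\tau}^*$ on $H^1$ equals $2g$ by Step 1. The Lefschetz fixed point theorem then gives
\[
r=L(\underline{\tau})=2-2g.
\]
\begin{itemize}
\item For $g\ge2$ this gives $r<0$, a contradiction, so $\underline{\tau}=\id$.
\item For $g=1$ we get $r=0$, so the involution is free, and the quotient is a torus by Riemann--Hurwitz. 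A free involution of $T^2$ acting trivially on homology is conjugate to a translation of order two.
\item For $g=0$ we get $r=2$, so the involution is conjugate to the rotation by $\pi$.
\end{itemize}
The main obstacle is the orientation-reversing subcase in Step 1. The rest is standard Lefschetz and Riemann--Hurwitz bookkeeping.
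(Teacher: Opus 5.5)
Your reduction to the base and your fixed-point count are essentially the paper's argument. The paper gets $\underline{\tau}^*=\id$ on $H^1(\Sigma_g;\R)$ from the Gysin sequence; you get the same injectivity from surjectivity of $\pi_*$ on $H_1$ via \cref{prop:H1-Seifert}. The paper then derives $r=2-2g$ by combining Riemann--Hurwitz with the transfer isomorphism $H^1(S;\R)\cong H^1(\Sigma_g;\R)^{\underline{\tau}^*}$, which gives $h=g$. You get $r=2-2g$ directly from the Lefschetz number. That substitution is harmless, but note that it uses orientation preservation twice: for $\underline{\tau}^*=+1$ on $H^2$, and for each isolated fixed point having index $+1$.

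The gap is the orientation step, which you leave open. Your claim that a $\tau$ commuting with the circle action may reverse the fiber orientation is incorrect. The identity $\tau(t\cdot y)=t\cdot\tau(y)$ means that $d\tau$ sends the infinitesimal generator of the action at $y$ to the generator at $\tau(y)$, so every fiber is carried to a fiber with its orientation preserved. On the regular part, the orientation of $Y$ is the base orientation followed by the fiber orientation. Hence orientation preservation of $\tau$ forces $\underline{\tau}$ to preserve the orientation of $\Sigma_g$. The paper uses this observation tacitly when it writes ``since $\underline{\tau}$ is orientation-preserving''. It removes the orientation-reversing case for every $g$, so you need neither the cup-product argument nor a special treatment of $g=0$.

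Your suggested fallback via the Euler number would not work. If both base and fiber are reversed, the identification of $\underline{\tau}^*L$ with the conjugate bundle $\overline{L}$ is consistent with any value of $e$. Concretely, complex conjugation on a Brieskorn sphere $\Sigma(p,q,r)$ is an orientation-preserving involution that acts trivially on $H^1(Y;\R)=0$. It inverts the $S^1$--action and induces a reflection of the base $S^2$, even though $e\neq 0$. So the reflection case genuinely occurs unless you use that $\tau$ commutes with the action rather than inverting it. As written, your proof of the $g=0$ case is incomplete.
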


\begin{proof}
For the Seifert $U(1)$-fibration
$
U(1)\to Y\xrightarrow{\pi}\check{\Sigma},
$
we have the Gysin exact sequence over $\mathbb{R}$:
$$
0
\longrightarrow
H^1_{\mathrm{orb}}(\check{\Sigma};\mathbb{R})
\xrightarrow{\pi^*}
H^1(Y;\mathbb{R})
\longrightarrow
H^0_{\mathrm{orb}}(\check{\Sigma};\mathbb{R})
\xrightarrow{\cdot e(\pi)}
H^2_{\mathrm{orb}}(\check{\Sigma};\mathbb{R})
\longrightarrow
H^2(Y;\mathbb{R})
\longrightarrow \cdots .
$$
In particular, $\pi^*$ is injective. Since $\tau$ commutes with the Seifert $S^1$--action, the induced involution $\overline{\tau}$ on $\check{\Sigma}$ satisfies
$$
\pi\circ \tau=\overline{\tau}\circ \pi.
$$
Thus the assumption $\tau^*=\id$ on $H^1(Y;\mathbb{R})$ implies
$$
\overline{\tau}^*=\id:H^1_{\mathrm{orb}}(\check{\Sigma};\mathbb{R})\to H^1_{\mathrm{orb}}(\check{\Sigma};\mathbb{R}).
$$
Since the real orbifold cohomology of $\check{\Sigma}$ agrees with the real cohomology of its underlying surface $\Sigma_g$, we obtain
$$
\underline{\tau}^*=\id:H^1(\Sigma_g;\mathbb{R})\to H^1(\Sigma_g;\mathbb{R}).
$$

Suppose that $\underline{\tau}$ is not the identity. Since $\underline{\tau}$ is orientation-preserving, the quotient
$
S=\Sigma_g/\langle\underline{\tau}\rangle
$
is an oriented surface, and the natural projection
$
p:\Sigma_g\to S
$
is a two-fold branched covering. Let $h$ be the genus of $S$, and let $r$ be the number of branch points. The Riemann--Hurwitz formula gives
$$
2-2g=2(2-2h)-r=4-4h-r.
$$
On the other hand, by the transfer argument,
$
H^1(S;\mathbb{R})\cong H^1(\Sigma_g;\mathbb{R})^{\underline{\tau}^*}.
$
Since $\underline{\tau}^*=\id$, we have $2h=2g$, and hence $h=g$. Substituting this into the Riemann--Hurwitz formula gives
$
r=2-2g.
$
Therefore $g$ is either $0$ or $1$.

If $g=0$, the standard classification of periodic maps of $S^2$ implies that every non-trivial orientation-preserving involution is conjugate to the $180$-degree rotation about an axis. If $g=1$, then $r=0$, so $\underline{\tau}$ is fixed-point-free. By the standard classification of periodic maps of the torus, an orientation-preserving involution of $T^2$ acting trivially on $H^1(T^2;\mathbb{R})$ is conjugate to a non-trivial translation of order two. This proves the claim.
\end{proof}
From \cref{classify_under}, we obtain the following description of the induced involution on the orbifold base.

\begin{prop}\label{classify_over}
Under the assumptions of \cref{classify_under}, the induced involution
$$
\overline{\tau}:\check{\Sigma}(g;a_1,\ldots,a_n)\to \check{\Sigma}(g;a_1,\ldots,a_n)
$$
is conjugate to one of the following:
\begin{itemize}[leftmargin=0pt,itemindent=1.5em,labelsep=0.5em]
    \item if $g=0$, either $\id:S^2\to S^2$ or the $180$-degree rotation about the $z$-axis. In the latter case, after reordering the orbifold points, the orbifold data can be written as
    $$
    (a_1,\ldots,a_n)=(c_1,\ldots,c_m,c'_1,\ldots,c'_m,d_0,d_1),
    \qquad c_i=c'_i,
    $$
    where the points of orders $d_0$ and $d_1$ are fixed by $\overline{\tau}$, and the point of order $c_i$ is interchanged with the point of order $c'_i$;

    \item if $g=1$, either $\id:T^2\to T^2$ or a non-trivial translation of order two, for example
    $$
    (x,y)\mapsto \left(x+\frac{1}{2},y\right).
    $$
    In the latter case, after reordering the orbifold points, the orbifold data can be written as
    $$
    (a_1,\ldots,a_n)=(c_1,\ldots,c_m,c'_1,\ldots,c'_m),
    \qquad c_i=c'_i,
    $$
    and the points of orders $c_i$ and $c'_i$ are interchanged by $\overline{\tau}$;

    \item if $g\geq 2$, $\overline{\tau}$ is the identity on $\check{\Sigma}$.
\end{itemize}
Moreover, suppose that $\tau$ is non-trivial. Then the following holds:
\begin{itemize}[leftmargin=0pt,itemindent=1.5em,labelsep=0.5em]
    \item If $\overline{\tau}=\id$, then $\tau$ is the $(-1)$--gauge transformation of the orbifold circle bundle
    $
    S^1\to Y\to \check{\Sigma}.
    $

    \item If $g=0$ and $\overline{\tau}$ is the $180$-degree rotation, then the fiber-preserving conjugacy class of $\tau$ is determined by its action on the fibers over the two fixed points of $\overline{\tau}$. There are two conjugacy classes, which are interchanged by composition with the $(-1)$--gauge transformation.

    \item If $g=1$ and $\overline{\tau}$ is a non-trivial translation of order two, then, in the standard model considered here, there is one fiber-preserving conjugacy class of lifts.
\end{itemize}
\end{prop}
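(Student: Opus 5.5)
The first part of the statement follows directly from \cref{classify_under}; the work is in identifying how $\overline{\tau}$ acts on the orbifold points, and then in describing the lifts $\tau$ of $\overline{\tau}$.

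\textbf{Step 1: the base involution.} The underlying involution $\underline{\tau}$ is conjugate to one of the models listed in \cref{classify_under}. The map $\overline{\tau}$ is an orbifold diffeomorphism, so it preserves the isotropy orders. Hence it permutes the cone points, sending a point of order $a_i$ to a point of the same order.
\begin{itemize}
\item For $g\geq 2$, $\underline{\tau}=\id$, so $\overline{\tau}$ is the identity on $\check{\Sigma}$.
\item For $g=0$ with $\underline{\tau}$ the rotation, every cone point is either one of the two poles or lies in a free orbit of size two. Pairing the free orbits gives the data $(c_1,\ldots,c_m,c'_1,\ldots,c'_m)$ with $c_i=c'_i$. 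The poles give $d_0,d_1$; if a pole is not an orbifold point we allow $d_j=1$.
\item For $g=1$ with $\underline{\tau}$ a translation, the action is free, so all cone points come in interchanged pairs.
\end{itemize}
A conjugacy of $\underline{\tau}$ can be chosen to carry the cone points along, since $\mathrm{Diff}^+$ acts transitively on invariant configurations of the same combinatorial type. This gives the orbifold conjugacy.

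\textbf{Step 2: lifts of the identity.} Suppose $\overline{\tau}=\id$. Then $\tau$ commutes with the $S^1$-action and covers the identity, so on each fiber it acts by a rotation $\lambda(x)\in S^1$, smooth in $x$. On the principal part, commuting with the action forces $\tau(y)=\lambda(\pi(y))\cdot y$ for a smooth function $\lambda\colon\check{\Sigma}\to S^1$. The condition $\tau^2=\id$ gives $\lambda^2=1$. Since $\check{\Sigma}$ is connected, $\lambda$ is constant and equal to $\pm1$. Because $\tau$ is nontrivial, $\lambda=-1$, so $\tau$ is the $(-1)$--gauge transformation.

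\textbf{Step 3: lifts of a nontrivial $\overline{\tau}$.} Suppose $\overline{\tau}\neq\id$, and let $\tau,\tau'$ be two involutive lifts commuting with the action. Then $\tau'=\lambda\cdot\tau$ for some $\lambda\colon\check{\Sigma}\to S^1$. The involution condition reads $\lambda\cdot(\lambda\circ\overline{\tau})=1$. Conjugating $\tau$ by a gauge transformation $\mu$ replaces $\lambda$ by $\lambda\,\mu\,(\mu\circ\overline{\tau})^{-1}$.

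So classifying lifts amounts to computing the twisted cohomology group
\[
\{\lambda \mid \lambda\,(\lambda\circ\overline{\tau})=1\}\big/\{\mu\,(\mu\circ\overline{\tau})^{-1}\},
\]
modulo those $\lambda$ realized by conjugations that also move $\overline{\tau}$. The cleanest way to compute this is to pass to the quotient orbifold $\check{\Sigma}/\overline{\tau}$, where $\tau$ descends to a $\Z_2$-structure.

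\emph{Case $g=0$.} At a fixed point $x$ of $\overline{\tau}$, the value $\lambda(x)$ is a square root of $1$, so it equals $\pm1$. Moreover $\lambda(x)$ is unchanged by $\mu\,(\mu\circ\overline{\tau})^{-1}$, since that factor is $1$ at fixed points. So $\lambda$ at the two poles gives invariants in $\{\pm1\}^2$. Continuity along an invariant arc joining the poles, combined with the conjugation freedom, should show that only the product matters. I expect the precise statement to be that the action of $\tau$ on the two fixed fibers, each being a rotation by $\pm1$ relative to the other, determines the class, and that there are exactly two classes.

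To see this, consider the quotient, which is a disk-like orbifold with two branch points. Lifts correspond to elements of $H^1(\text{quotient};\Z_2)$ subject to constraints at the branch points, and this group has order two. Composing with the $(-1)$--gauge transformation flips the value at both poles simultaneously, which should interchange the two classes.

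\emph{Case $g=1$.} The action is free, so $\lambda$ can be trivialized on a fundamental domain. The only obstruction is a holonomy class in $\Z_2$. Changing the $S^1$-trivialization along the translation direction, by a map $\mu$ of degree one that gets halved, absorbs this class. This gives a single class in the standard model.

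\textbf{Main obstacle.} The delicate step is the $g=0$ rotation case: showing that exactly two classes survive and no more, and handling the orbifold points at the poles, where the action of $\tau$ on the singular fiber must be compatible with the local $\Z_{d_j}$-model. I would work in the explicit local models $(D\times\C)/\Z_{d_j}$ and check that the lift there is $(z,w)\mapsto(-z,\pm w')$ up to the isotropy action.
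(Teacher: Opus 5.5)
Your Steps 1 and 2 match the paper. Step 3 takes a different route. You treat lifts up to gauge conjugacy as a torsor under $\{\lambda:\lambda\,(\lambda\circ\overline{\tau})=1\}/\{\mu\,(\mu\circ\overline{\tau})^{-1}\}$. The paper instead identifies fiber-preserving conjugacy classes of lifts with descents $(p^*)^{-1}([Y])\subset\operatorname{Pic}^t(\check\Sigma/\overline{\tau})$ and counts them with the Furuta--Steer invariants. The two preimages differ by $\widetilde\alpha_j\mapsto\widetilde\alpha_j+\epsilon_jd_j$ with $\epsilon_0+\epsilon_1$ even, and for $g=1$ the map $P_T$ is injective.

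Your framework is legitimate, but the $g=0$ rotation case has a genuine gap. This is the case you flag as the main obstacle. You prove only the lower bound: $\lambda(x_0)\in\{\pm1\}$ is a gauge-conjugacy invariant, so $\tau$ and $-\tau$ are not conjugate. The upper bound rests on heuristics that are wrong.

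\begin{itemize}
\item The product $\lambda(x_0)\lambda(x_1)$ is always $1$. So if the class depended only on that product, there would be a single class.
\item The quotient $\check\Sigma/\overline{\tau}$ is the sphere orbifold $S^2(c_1,\ldots,c_m,2d_0,2d_1)$, not a disk.
\item The classes are not counted by its $H^1$ with $\Z_2$ coefficients. The underlying $S^2$ has $H^1=0$, and $\mathrm{Hom}(\pi_1^{\mathrm{orb}},\Z_2)$ has order $2^{1+k}$, where $k$ is the number of even $c_i$.
\end{itemize}

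The classes in fact form a torsor over $\ker\bigl(p^*\colon\operatorname{Pic}^t(\check\Sigma/\overline{\tau})\to\operatorname{Pic}^t(\check\Sigma)\bigr)$, which has order two. That is what the paper computes.

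Within your framework the gap closes in a few lines.
\begin{enumerate}
\item Since $|\check\Sigma|=S^2$ is simply connected, write $\lambda=e^{i\theta}$ with $\theta$ orbifold-smooth.
\item Then $\theta+\theta\circ\overline{\tau}$ is continuous with values in $2\pi\Z$, hence equals a constant $2\pi k$.
\item Evaluating at the poles gives $\lambda(x_0)=\lambda(x_1)=(-1)^k$. The two values are forced to be equal, not merely to have fixed product.
\item Set $\psi:=\theta-\pi k$. Then $\psi\circ\overline{\tau}=-\psi$, so $\psi=\phi-\phi\circ\overline{\tau}$ with $\phi=\psi/2$.
\item Hence $\lambda=(-1)^k\,\nu\,(\nu\circ\overline{\tau})^{-1}$ with $\nu=e^{i\phi}$.
\end{enumerate}
So every lift is gauge-conjugate to $\tau$ or $-\tau$, and there are exactly two classes.

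The same argument makes your $g=1$ sketch rigorous. A translation acts trivially on $H^1(T^2;\Z)$, so $[\lambda(\lambda\circ\overline{\tau})]=2[\lambda]$, which forces $[\lambda]=0$. Then $\lambda$ reduces to a constant $\pm1$ as above. Your $\mu=e^{2\pi i x}$ satisfies $\mu(\mu\circ\overline{\tau})^{-1}=-1$, so only one class remains.

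Completed this way, your route is more elementary than the paper's, since it avoids the Furuta--Steer classification altogether.
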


\begin{proof}
The first assertion follows from \cref{classify_under}. Indeed, the underlying involution on $\Sigma_g$ is orientation-preserving and acts trivially on $H^1(\Sigma_g;\mathbb{R})$. Hence it is conjugate to one of the involutions listed in the statement. Since an orbifold automorphism preserves the orders of orbifold points, the orbifold points are either fixed or paired as stated. 

We now classify the lifts to the Seifert circle bundle
$
S^1\to Y\to \check{\Sigma}.
$
If $\overline{\tau}=\id$, then any lift of $\overline{\tau}$ is a gauge transformation of this orbifold circle bundle. Since $\tau^2=\id$ and $\check{\Sigma}$ is connected, such a gauge transformation is multiplication by a constant element of order two in $S^1$. Thus the non-trivial lift is the $(-1)$--gauge transformation.

Suppose next that $g=0$ and that $\overline{\tau}$ is the $180$-degree rotation. Write
$$
\check{S}=\check{S}^2(c_1,\ldots,c_m,c'_1,\ldots,c'_m,d_0,d_1),
\qquad c_i=c'_i,
$$
where the points of orders $d_0,d_1$ are fixed and the points of orders $c_i,c'_i$ are interchanged. Then
$$
\check{S}_*:=\check{S}/\langle\overline{\tau}\rangle
\cong
\check{S}^2(c_1,\ldots,c_m,2d_0,2d_1).
$$
Let $p:\check{S}\to\check{S}_*$ be the quotient map. A lift of $\overline{\tau}$ to $Y$, up to fiber-preserving conjugacy, is equivalent to a descent of the orbifold circle bundle $Y\to\check{S}$ to $\check{S}_*$. Hence the set of such conjugacy classes is identified with
$$
(p^*)^{-1}([Y])\subset \operatorname{Pic}^t(\check{S}_*).
$$

Let
$$
\Theta_{\check{S}} :\operatorname{Pic}^t(\check{S})\to
\left(\bigoplus_i\Z_{c_i}\right)\oplus
\left(\bigoplus_i\Z_{c'_i}\right)\oplus \Z_{d_0}\oplus\Z_{d_1}\oplus\Q
$$
be Furuta--Steer's classification map
$$
L\mapsto
(\text{local isotropy data of }L,\ e(L)),
$$
and define $\Theta_{\check{S}_*}$ similarly. Then the pull-back is characterized by the commutative diagram
$$
\begin{CD}
\operatorname{Pic}^t(\check{S}_*) @>{p^*}>> \operatorname{Pic}^t(\check{S})\\
@V{\Theta_{\check{S}_*}}VV @VV{\Theta_{\check{S}}}V\\
\left(\bigoplus_i\Z_{c_i}\right)\oplus\Z_{2d_0}\oplus\Z_{2d_1}\oplus\Q
@>{P_S}>>
\left(\bigoplus_i\Z_{c_i}\right)\oplus
\left(\bigoplus_i\Z_{c'_i}\right)\oplus \Z_{d_0}\oplus\Z_{d_1}\oplus\Q,
\end{CD}
$$
where
$$
P_S((\zeta_i)_i,\widetilde{\alpha}_0,\widetilde{\alpha}_1,e)
=
((\zeta_i)_i,(\zeta_i)_i,\alpha_0,\alpha_1,2e),
\qquad
\alpha_j\equiv \widetilde{\alpha}_j \pmod {d_j}.
$$
The Furuta--Steer's classification map is injective. Since a lift is assumed to exist, the fiber $(p^*)^{-1}([Y])$ is non-empty. Fix one element of this fiber. The other possible preimages have the same $(\zeta_i)_i$ and the same rational Euler number, while
$
\widetilde{\alpha}_j
$
may be replaced by
$ 
\widetilde{\alpha}_j+\epsilon_jd_j, 
\epsilon_j\in\{0,1\}.
$
The Furuta--Steer's condition for an orbifold line bundle over $\check{S}_*$ says that
$$
e-\sum_i\frac{\zeta_i}{c_i}
-\frac{\widetilde{\alpha}_0}{2d_0}
-\frac{\widetilde{\alpha}_1}{2d_1}
\in \Z.
$$
Changing $\widetilde{\alpha}_j$ by $\epsilon_jd_j$ changes the left-hand side by
$
-\frac{\epsilon_0+\epsilon_1}{2}.
$
Hence the condition is preserved exactly when $\epsilon_0+\epsilon_1$ is even. Thus there are precisely two choices, namely
$$
(\epsilon_0,\epsilon_1)=(0,0)
\quad\text{and}\quad
(\epsilon_0,\epsilon_1)=(1,1).
$$
Therefore
$
|(p^*)^{-1}([Y])|=2.
$
The non-trivial choice changes the lift by multiplication by $-1$ on the fibers, and hence corresponds to composing with the $(-1)$--gauge transformation. Thus the two conjugacy classes are interchanged by the $(-1)$--gauge transformation.

Finally suppose that $g=1$ and that $\overline{\tau}$ is a non-trivial translation of order two. After conjugation, write
$
\overline{\tau}(x,y)=\left(x+\frac12,y\right).
$
The action on the base is free, so the orbifold points are paired:
$$
\check{T}=\check{T}^2(c_1,\ldots,c_m,c'_1,\ldots,c'_m),
\qquad c_i=c'_i,
$$
and
$$
\check{T}_*:=\check{T}/\langle\overline{\tau}\rangle
\cong
\check{T}^2(c_1,\ldots,c_m).
$$
Again, the pull-back is characterized by the commutative diagram
$$
\begin{CD}
\operatorname{Pic}^t(\check{T}_*) @>{p^*}>> \operatorname{Pic}^t(\check{T})\\
@V{\Theta_{\check{T}_*}}VV @VV{\Theta_{\check{T}}}V\\
\left(\bigoplus_i\Z_{c_i}\right)\oplus\Q
@>{P_T}>>
\left(\bigoplus_i\Z_{c_i}\right)\oplus
\left(\bigoplus_i\Z_{c'_i}\right)\oplus\Q,
\end{CD}
$$
where
$$
P_T((\zeta_i)_i,e)=((\zeta_i)_i,(\zeta_i)_i,2e).
$$
Since a lift exists, $[Y]$ lies in the image of $p^*$. Furuta--Steer's classification map is injective, and the above formula for $P_T$ shows that the preimage is unique: the local isotropy data determine each $\zeta_i$, and the rational Euler number determines $e$. Therefore
$
|(p^*)^{-1}([Y])|=1.
$
Thus, in the genus-one standard model, there is exactly one fiber-preserving conjugacy class of lifts. This completes the proof.
\end{proof}

\section{Construction of real spin 4-orbifolds}
\label{section : Construction of X_0}

In this section, we fix a closed orbifold surface
$$
\check{\Sigma}
=
\check{\Sigma}(g;a_1,\ldots,a_n,a'_1,\ldots,a'_n,a''_1,\ldots,a''_m)
$$
and the Seifert fibration
$$
Y
=
M(g;(a_1,b_1),\ldots,(a_n,b_n),(a'_1,b'_1),\ldots,(a'_n,b'_n),(a''_1,b''_1),\ldots,(a''_m,b''_m))
\to \check{\Sigma}.
$$
We shall construct a class of compact $4$--orbifolds $X$ with boundary $Y$. 

Let $H_g$ be a genus $g$ handlebody. Put
$$
I=[0,1],\qquad
I_1=\left[0,\frac{1}{3}\right],\qquad
I_2=\left[\frac{2}{3},1\right],\qquad
I''=\left[\frac{1}{3},1\right].
$$
Choose pairwise disjoint disks $U_i$ and $U''_j$ in $H_g$, and define
$$
W_0
=
H_g
\setminus
\left(
\coprod_{i=1}^n (U_i\times I)
\sqcup
\coprod_{j=1}^m (U''_j\times I'')
\right).
$$
The following construction was already considered by Furuta--Fukumoto--Ue \cite{fukumoto2001w}. In this paper, this construction will play a central role.  
\begin{defn}
We define a smooth compact $4$--manifold $X_0$ by
$$
\begin{aligned}
X_0
&=
W_0\times S^1
\cup_{\{\varphi_i,\varphi'_i,\varphi''_j\}}
\left(
\coprod_{i=1}^n
-\frac{\widetilde{D}_i\times I_1\times S^1}{\mathbb{Z}_{a_i}}
\sqcup
\coprod_{i=1}^n
\frac{\widetilde{D}'_i\times I_2\times S^1}{\mathbb{Z}_{a'_i}}
\sqcup
\coprod_{j=1}^m
\frac{\widetilde{D}''_j\times I_2\times S^1}{\mathbb{Z}_{a''_j}}
\right)  \\
&\cong
W_0\times S^1
\cup_{\{\phi_i,\phi'_i,\phi''_j\}}
\left(
\coprod_{i=1}^n -(D_i\times I_1\times S^1)
\sqcup
\coprod_{i=1}^n (D'_i\times I_2\times S^1)
\sqcup
\coprod_{j=1}^m (D''_j\times I_2\times S^1)
\right).
\end{aligned}
$$
Here the gluing maps in the orbifold description are given by
$$
\begin{aligned}
\varphi_i:
\frac{\partial\widetilde{D}_i\times I_1\times S^1}{\mathbb{Z}_{a_i}}
&\to
\partial U_i\times I\times S^1,
&
[z,t,w]
&\mapsto
(z^{-a_i},t,z^{-b_i}w),\\
\varphi'_i:
\frac{\partial\widetilde{D}'_i\times I_2\times S^1}{\mathbb{Z}_{a'_i}}
&\to
\partial U_i\times I\times S^1,
&
[z,t,w]
&\mapsto
(z^{a'_i},t,z^{-b'_i}w),\\
\varphi''_j:
\frac{\partial\widetilde{D}''_j\times I_2\times S^1}{\mathbb{Z}_{a''_j}}
&\to
\partial U''_j\times I''\times S^1,
&
[z,t,w]
&\mapsto
(z^{a''_j},t,z^{-b''_j}w).
\end{aligned}
$$
Equivalently, after passing to smooth solid tori, the gluing maps are
$$
\begin{aligned}
\phi_i:
\partial D_i\times I_1\times S^1
&\to
\partial U_i\times I\times S^1,
&
(\xi,t,\eta)
&\mapsto
(\xi^{-a_i}\eta^{-\nu_i},t,\xi^{-b_i}\eta^{\rho_i}),\\
\phi'_i:
\partial D'_i\times I_2\times S^1
&\to
\partial U_i\times I\times S^1,
&
(\xi',t',\eta')
&\mapsto
(\xi'{}^{a'_i}\eta'{}^{\nu'_i},t',\xi'{}^{-b'_i}\eta'{}^{\rho'_i}),\\
\phi''_j:
\partial D''_j\times I_2\times S^1
&\to
\partial U''_j\times I''\times S^1,
&
(\xi'',t'',\eta'')
&\mapsto
(\xi''{}^{a''_j}\eta''{}^{\nu''_j},t'',\xi''{}^{-b''_j}\eta''{}^{\rho''_j}).
\end{aligned}
$$
Choose the integers
\(\nu_i,\rho_i,\nu'_i,\rho'_i,\nu''_j,\rho''_j\)
so that
\[
a_i\rho_i+b_i\nu_i=1,
\qquad
a'_i\rho'_i+b'_i\nu'_i=1,
\qquad
a''_j\rho''_j+b''_j\nu''_j=1.
\]
With these choices, the displayed gluing maps are diffeomorphisms of
the boundary tori.
\end{defn}

We call the regions $U_i\times I$ {\it full tunnels}, and the regions $U''_j\times I''$ {\it half tunnels}.

The compact $4$--manifold $X_0$ has a natural $S^1$--action, given by rotation of the last $S^1$--factor on each piece. Hence $X_0$ can be regarded as the total space of an orbifold $U(1)$--bundle
$
X_0\to N:=X_0/S^1.
$
The quotient $N$ has a natural $3$--dimensional compact orbifold structure with codimension two singular strata and with boundary.

The construction of $X_0$ is schematically illustrated in
\Cref{fig:picture-X0}.  The figure should be understood at the level of the
quotient $N=X_0/S^1$.  The ambient three-dimensional object represents the
handlebody $H_g$.  The red cylinders represent the full tunnels
$U_i\times I$, while the green cylinders represent the half tunnels
$U''_j\times I''$.  After removing these tunnels and gluing in the corresponding
solid pieces, we obtain the compact $4$-manifold $X_0$ with the natural
$S^1$-action described above.

\begin{figure}[htbp]
\centering
\includegraphics[width=0.42\textwidth]{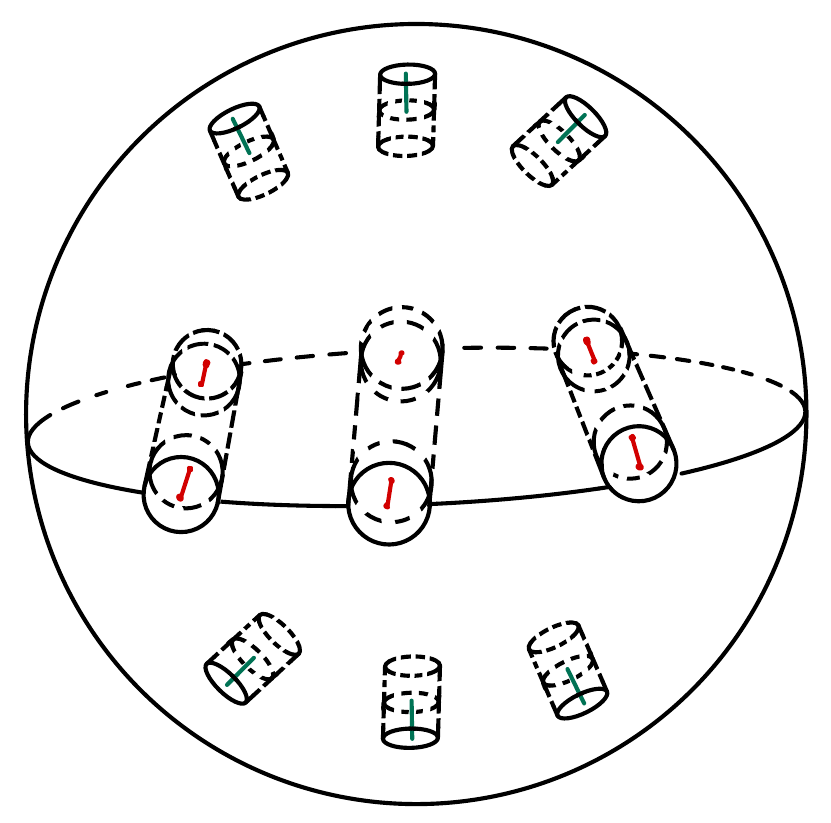}
\caption{
A schematic picture of the construction of $X_0$, drawn at the level of
the quotient $N=X_0/S^1$.  The red lines indicate 1-dimensional orbifold singularities for the full tunnels
$U_i\times I$, and the green lines indicate 1-dimensional orbifold singularities of the half tunnels
$U''_j\times I''$.
}
\label{fig:picture-X0}
\end{figure}

The boundary components of $X_0$ are illustrated in \Cref{fig:X0-boundary}.
The bottom component represents the original Seifert manifold $Y=\partial_-X_0$.
The other boundary components arise from the unused ends of the tunnels.  More
precisely, a full tunnel connecting the two Seifert pieces with data
$(a_i,b_i)$ and $(a'_i,b'_i)$ contributes the lens space
$-L(\widetilde a_i,\widetilde b_i)$, while a half tunnel contributes
$-L(\widetilde a''_j,\widetilde b''_j)$.  Thus these components form
$\partial_+X_0$.  Capping them off by the corresponding cyclic quotient
$4$-balls gives the orbifold $\check{X}$ with boundary $Y$.

\begin{figure}[htbp]
    \centering
    \includegraphics[width=0.22\textwidth]{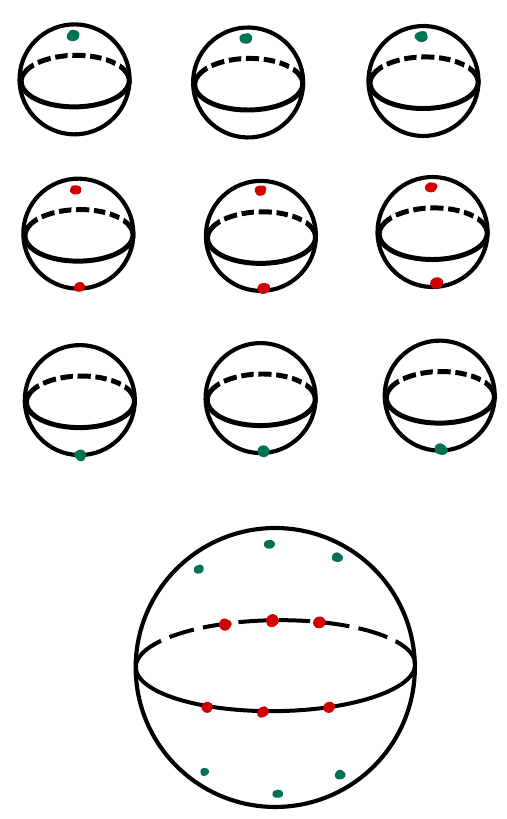}
    \caption{
    A schematic picture of the boundary of $X_0$. 
    }
    \label{fig:X0-boundary}
\end{figure}

For later use, we record the lens space parameters of the remaining boundary
components of $X_0$.  For a full tunnel connecting the two Seifert pieces
with data $(a_i,b_i)$ and $(a'_i,b'_i)$, define integers
$\widetilde a_i,\widetilde b_i,\widetilde\nu_i,\widetilde\rho_i$ by
\[
\begin{pmatrix}
-\widetilde b_i & \widetilde\rho_i\\
\widetilde a_i & \widetilde\nu_i
\end{pmatrix}
=
\begin{pmatrix}
a'_i & \nu'_i\\
-b'_i & \rho'_i
\end{pmatrix}^{-1}
\begin{pmatrix}
-a_i & -\nu_i\\
-b_i & \rho_i
\end{pmatrix}.
\]
The corresponding boundary component is the lens space
$
-L(\widetilde a_i,\widetilde b_i).
$

For a half tunnel corresponding to the Seifert datum $(a''_j,b''_j)$, we use
the convention that the missing end has data
$
(a,b,\nu,\rho)=(1,0,0,1).
$
Thus we define
\[
\begin{pmatrix}
-\widetilde b''_j & \widetilde\rho''_j\\
\widetilde a''_j & \widetilde\nu''_j
\end{pmatrix}
=
\begin{pmatrix}
a''_j & \nu''_j\\
-b''_j & \rho''_j
\end{pmatrix}^{-1}
\begin{pmatrix}
-1 & 0\\
0 & 1
\end{pmatrix}.
\]
The corresponding boundary component is
$
-L(\widetilde a''_j,\widetilde b''_j).
$ Therefore, the boundary of $X_0$ is
$$
\partial X_0
=
\left(
\bigcup_{i=1}^n -L(\widetilde{a}_i,\widetilde{b}_i)
\right)
\cup
\left(
\bigcup_{j=1}^m -L(\widetilde{a}''_j,\widetilde{b}''_j)
\right)
\cup
Y.
$$
The restricted $S^1$--action preserves each boundary component, and its restriction to $Y$ is the given Seifert $S^1$--action on $Y$.

We denote
$$
\partial_+X_0
=
\left(
\bigcup_{i=1}^n -L(\widetilde{a}_i,\widetilde{b}_i)
\right)
\cup
\left(
\bigcup_{j=1}^m -L(\widetilde{a}''_j,\widetilde{b}''_j)
\right),
\qquad
\partial_-X_0=Y.
$$
We note that, once the Seifert data are fixed, the diffeomorphism type of $X_0$ is determined by the choices of the integers appearing in the gluing maps. We write
$$
\boldsymbol{\nu}
=
(\nu_1,\ldots,\nu_n,\nu'_1,\ldots,\nu'_n,\nu''_1,\ldots,\nu''_m),
\qquad
\boldsymbol{\rho}
=
(\rho_1,\ldots,\rho_n,\rho'_1,\ldots,\rho'_n,\rho''_1,\ldots,\rho''_m),
$$
and denote the resulting manifold by
$
X_0(\boldsymbol{\nu},\boldsymbol{\rho}).
$
We call $(\boldsymbol{\nu},\boldsymbol{\rho})$ the {\it gluing data} of $X_0$.

For each lens space component $L(\widetilde{a},\widetilde{b})$ of $\partial_+X_0$, let
$
C(\widetilde{a},\widetilde{b})
$
denote the orbifold $4$--ball whose boundary is $L(\widetilde{a},\widetilde{b})$. More explicitly, it is the quotient of $D^4\subset \mathbb{C}^2$ by the standard cyclic action corresponding to the lens space $L(\widetilde{a},\widetilde{b})$.

\begin{defn}
We define the compact oriented $4$--orbifold
$
\check{X}=\check{X}(\boldsymbol{\nu},\boldsymbol{\rho})
$
by capping off the lens space components of $\partial_+X_0$:
$$
\check{X}
:=
X_0
\cup_{\partial_+X_0}
\left(
\coprod_{i=1}^n C(\widetilde{a}_i,\widetilde{b}_i)
\sqcup
\coprod_{j=1}^m C(\widetilde{a}''_j,\widetilde{b}''_j)
\right).
$$
Here $C(\widetilde{a}_i,\widetilde{b}_i)$ is attached along the boundary component $-L(\widetilde{a}_i,\widetilde{b}_i)$ of $X_0$, and similarly for $C(\widetilde{a}''_j,\widetilde{b}''_j)$. Thus
$
\partial \check{X}=Y.
$
\end{defn}

For simplicity, we assume $m=0$. For the general case, see \cref{general_homology}. 

\begin{prop}\label{homologyofx0}
The homology groups of $X_0$ are described as follows:
$$
H_2(X_0;\Z)
\cong
\bigoplus_{l=1}^g \Z\langle \beta_l\times \delta\rangle
\oplus
\left\{
\sum_{i=1}^n s_i(\bar{a}'_i m_i+\bar{a}_i m'_i)
\ \middle|\
\sum_{i=1}^n(\bar{a}'_i b_i+\bar{a}_i b'_i)s_i=0
\right\},
$$
and
$$
H_1(X_0;\Z)
\cong
\left\langle
\gamma'_1,\ldots,\gamma'_n,\delta,\beta_1,\ldots,\beta_g
\ \middle|\
-a_i\gamma'_i-b_i\delta=0,\quad
a'_i\gamma'_i-b'_i\delta=0\;\;(i=1,\ldots,n)
\right\rangle .
$$
Here $\delta$ denotes the class of a regular fiber of the $S^1$--fibration
$
X_0\to X_0/S^1=N,
$
and $\beta_1,\ldots,\beta_g$ are the classes coming from a chosen half of a symplectic basis of $H_1(\Sigma_g;\Z)$. The classes $m_i$ and $m'_i$ denote the meridians of $\partial D_i$ and $\partial D'_i$, respectively. Finally,
$$
\bar{a}_i=\frac{a_i}{d_i},
\qquad
\bar{a}'_i=\frac{a'_i}{d_i}, \text{ where  }\gcd(a_i,a'_i) = d_i. 
$$
\end{prop}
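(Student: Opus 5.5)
The plan is to exploit the $S^1$--action: $X_0\to N=X_0/S^1$ is a circle bundle over a smooth $3$--manifold $N$ with boundary. I will compute $H_*(X_0)$ by a Mayer--Vietoris decomposition adapted to the construction, namely
\[
A=W_0\times S^1,\qquad B=\coprod_{i=1}^n\bigl(D_i\times I_1\times S^1\sqcup D'_i\times I_2\times S^1\bigr),\qquad A\cap B\simeq \coprod_{i=1}^n\bigl(T^2\sqcup T^2\bigr),
\]
where the tori are $\partial D_i\times S^1$ and $\partial D'_i\times S^1$. This follows the Mayer--Vietoris computation behind \cref{prop:H1-Seifert}.

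First I will record the homotopy types of the pieces. $W_0$ is $H_g$ with $n$ full tunnels $U_i\times I$ drilled out, so $W_0\simeq\bigvee^{g+n}S^1$. The generators are $\beta_1,\ldots,\beta_g$ from the handlebody and the meridians $\mu_i$ of the tunnels, which are $\partial U_i\times\{t\}$. Hence
\[
H_1(A)=\Z^{g+n}\oplus\Z\delta,\qquad H_2(A)=\bigoplus_{l=1}^g\Z\,\beta_l\times\delta\oplus\bigoplus_{i=1}^n\Z\,\mu_i\times\delta,
\]
and there is no relation among the $\mu_i$. This is the key difference from the closed surface case, where $\sum\gamma_i=0$. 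Each solid piece is $\simeq S^1$, generated by the core circle $\eta_i$ (respectively $\eta'_i$). Each torus has $H_1=\Z\langle m,\ell\rangle$, where $m=\partial D$ and $\ell$ is the $\eta$--direction, and $H_2=\Z$.

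Next I will write down the maps induced by the gluings $\phi_i,\phi'_i$. The torus $\partial D_i\times S^1$ maps to $A$ by $m_i\mapsto -a_i\mu_i-b_i\delta$ and $\ell_i\mapsto-\nu_i\mu_i+\rho_i\delta$. The torus $\partial D'_i\times S^1$ maps by $m'_i\mapsto a'_i\mu_i-b'_i\delta$ and $\ell'_i\mapsto \nu'_i\mu_i+\rho'_i\delta$. In the solid pieces, $m\mapsto0$ and $\ell\mapsto\eta$.

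\emph{Computing $H_1$.} Using $\ell_i,\ell'_i$ to eliminate $\eta_i,\eta'_i$, I get $H_1(X_0)$ as $H_1(A)$ modulo the images of $m_i$ and $m'_i$. Renaming $\mu_i$ as $\gamma'_i$ gives exactly the relations $-a_i\gamma'_i-b_i\delta=0$ and $a'_i\gamma'_i-b'_i\delta=0$. The sign convention may need to be matched to the paper's orientation $-(D_i\times I_1\times S^1)$. The $\beta_{g+1},\ldots,\beta_{2g}$ classes die, because they bound in the handlebody.

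\emph{Computing $H_2$.} I will use the exact sequence
\[
\bigoplus_i\bigl(\Z[T_i]\oplus\Z[T'_i]\bigr)\to H_2(A)\oplus H_2(B)\to H_2(X_0)\to\bigoplus_i\bigl(H_1(T_i)\oplus H_1(T'_i)\bigr)\to H_1(A)\oplus H_1(B).
\]
Here $H_2(B)=0$. The torus $[T_i]=m_i\times\delta$-direction maps to $(-a_i\mu_i-b_i\delta)\times\delta=-a_i\,\mu_i\times\delta$, and similarly $[T'_i]\mapsto a'_i\,\mu_i\times\delta$. The cokernel is therefore
\[
\bigoplus_l\Z\,\beta_l\times\delta\;\oplus\;\bigoplus_i\Z_{d_i}.
\]
The kernel part comes from classes $\sum_i(x_im_i+y_im'_i)$ lying in the kernel of $H_1(\sqcup T)\to H_1(A)\oplus H_1(B)$. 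In $H_1(B)$ the $\ell$--components must vanish, so only meridians occur. In $H_1(A)$ the $\mu_i$--coefficient gives $-a_ix_i+a'_iy_i=0$, which forces $(x_i,y_i)=s_i(\bar a'_i,\bar a_i)$. The $\delta$--coefficient gives $\sum_i s_i(\bar a'_ib_i+\bar a_ib'_i)=0$, up to the sign convention. This yields the displayed subgroup, free of rank $n-1$.

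The main obstacle is torsion and splitting. I must show that the $\Z_{d_i}$ torsion from the cokernel does not appear in $H_2$ as stated. Either it is killed, because $\mu_i\times\delta$ is actually identified via the capping surfaces, or the statement tacitly uses the free part. I will check this by a direct cycle construction. The classes $\bar a'_im_i+\bar a_im'_i$ lift to closed surfaces, built from a disk in $D_i$, a disk in $D'_i$, and a cylinder in the tunnel, the cylinder glued along $\bar a'_ia_i=\bar a_ia'_i$ copies of $\mu_i$. The extension then splits because the quotient is free. I expect that matching the paper's exact presentation requires careful bookkeeping of these sign conventions and of the torsion term.
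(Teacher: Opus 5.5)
You set up the same Mayer--Vietoris decomposition as the paper, and your computations of $H_1(X_0)$ and of $\ker i_1$ agree with it. The $H_2$ part, however, rests on a wrong computation of $i_2\colon H_2(T)\to H_2(W_0\times S^1)$, and this error is what creates your torsion ``obstacle''. The fundamental class of the gluing torus $T_i=\partial D_i\times\{t\}\times S^1$ is $m_i\times\ell_i$, not $m_i\times\delta$. The circle factor of the solid piece is the longitude $\ell_i$, which $\phi_i$ sends to $-\nu_i\mu_i+\rho_i\delta$, not to $\delta$. In fact $m_i\times\delta=\pm a_i[T_i]$, because $m_i\cdot\delta=\pm a_i$, so you computed the image of $a_i[T_i]$. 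The map $\phi_{i*}$ has matrix $\begin{pmatrix}-a_i&-\nu_i\\-b_i&\rho_i\end{pmatrix}$, of determinant $-(a_i\rho_i+b_i\nu_i)=-1$, so $[T_i]\mapsto-\mu_i\times\delta$. Similarly $[T'_i]\mapsto(a'_i\rho'_i+b'_i\nu'_i)\,\mu_i\times\delta=\mu_i\times\delta$. Hence $i_2$ hits every $\mu_i\times\delta$, and $\operatorname{coker}i_2=\bigoplus_l\Z\langle\beta_l\times\delta\rangle$ has no $\Z_{d_i}$ summands. This is exactly the corresponding step in the paper.

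Your proposed ways around the spurious torsion cannot work. By exactness, $\operatorname{coker}i_2$ injects into $H_2(X_0)$, so a genuine $\Z_{d_i}$ there could not be killed by any cycle construction. Also, the proposition is about the full integral group, not its free part. If your $i_2$ were correct, the statement would be false. For example, take $g=0$, $n=1$, $(a_1,b_1)=(a'_1,b'_1)=(2,1)$; you would get $\Z_2\subset H_2(X_0)$. But here $W_0\times S^1\simeq T^2$ and both gluing tori include as homotopy equivalences. So $X_0$ is homotopy equivalent to two solid tori glued along a torus, which is a lens space with $H_1=\Z_4$. Thus $H_2(X_0)=0$, as the formula predicts.

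Once $i_2$ is corrected, the rest of your argument goes through. $\ker i_1$ is a subgroup of a free abelian group, hence free, so $0\to\operatorname{coker}i_2\to H_2(X_0)\to\ker i_1\to0$ splits. Surjectivity onto $\ker i_1$ comes from exactness, so no explicit cycle construction is needed. Your sketched surfaces would not close up individually anyway: $\bar a'_im_i+\bar a_im'_i$ maps to $-\Delta_i\delta\neq0$ in $H_1(W_0\times S^1)$. Only combinations with $\sum_i\Delta_is_i=0$ lift, and they need path terms between tunnels, as in the proof of \cref{prop:Q-intersection-form-on-X0}.
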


\begin{rem}
Set $\Delta_i:=\bar a'_i b_i+\bar a_i b'_i .$ We usually assume that $ (\Delta_1,\ldots,\Delta_n)\neq(0,\ldots,0)$.  Then \[ \left\{ \sum_{i=1}^n s_i(\bar a'_i m_i+\bar a_i m'_i) \ \middle|\ \sum_{i=1}^n\Delta_i s_i=0 \right\} \cong \Z^{n-1}. \] Consequently, $H_2(X_0;\Z)\cong \Z^{g+n-1}$. Let \[ A_X:= \left\langle \gamma'_1,\ldots,\gamma'_n,\delta \ \middle|\ -a_i\gamma'_i-b_i\delta=0,\quad a'_i\gamma'_i-b'_i\delta=0\;\;(i=1,\ldots,n) \right\rangle . \] Then $ H_1(X_0;\Z)\cong A_X\oplus \Z^g$. Moreover, if $ (\Delta_1,\ldots,\Delta_n)\neq(0,\ldots,0)$,  then $A_X$ is finite. In particular, $H_1(X_0;\Q)\cong \Q^g $. Whenever we consider the orbifold $\check X$, we assume that $
\Delta_i\neq 0$ for every $i$.
\end{rem}

\begin{proof}
All homology groups are taken with $\Z$--coefficients. Let $\mu_i$ be the meridian of the $i$-th tunnel $C_i$ in $W_0$. We apply the Mayer--Vietoris sequence to the decomposition of $X_0$ into $W_0\times S^1$ and the solid pieces $D_i\times I_1\times S^1$, $D'_i\times I_2\times S^1$. The relevant part is
$$
H_2(T)\xrightarrow{i_2}
H_2(W_0\times S^1)\oplus H_2(B)
\xrightarrow{j_2}
H_2(X_0)
\xrightarrow{\partial_2}
H_1(T)
\xrightarrow{i_1}
H_1(W_0\times S^1)\oplus H_1(B)
\xrightarrow{j_1}
H_1(X_0)\to 0,
$$
where
$$
T=\coprod_{i=1}^n(\partial D_i\times I_1\times S^1)
\sqcup
\coprod_{i=1}^n(\partial D'_i\times I_2\times S^1)
\text{ and }
B=
\coprod_{i=1}^n(D_i\times I_1\times S^1)
\sqcup
\coprod_{i=1}^n(D'_i\times I_2\times S^1).
$$
We use the generators
$$
H_2(T)=\left\langle m_i\times l_i,\; m'_i\times l'_i\right\rangle,
\qquad
H_2(W_0\times S^1)=\left\langle \mu_i\times\delta,\; \beta_l\times\delta\right\rangle,
$$
and
$$
H_1(T)=\left\langle m_i,l_i,m'_i,l'_i\right\rangle,
\qquad
H_1(W_0\times S^1)\oplus H_1(B)
=
\left\langle \mu_i,\beta_l,\delta,l_i,l'_i\right\rangle .
$$
Here $l=1,\ldots,g$. The map $i_2$ is given by
$$
m_i\times l_i\mapsto \mu_i\times\delta,\qquad
m'_i\times l'_i\mapsto \mu_i\times\delta.
$$
Hence
$
\operatorname{coker} i_2
\cong
\bigoplus_{l=1}^g \Z\langle \beta_l\times\delta\rangle .
$

The map $i_1$ is determined by the gluing maps:
$$
\begin{aligned}
m_i &\mapsto -a_i\mu_i-b_i\delta,\\
l_i &\mapsto -\nu_i\mu_i+\rho_i\delta+l_i,\\
m'_i &\mapsto a'_i\mu_i-b'_i\delta,\\
l'_i &\mapsto \nu'_i\mu_i+\rho'_i\delta+l'_i.
\end{aligned}
$$
Let
$$
x=\sum_{i=1}^n(p_i m_i+q_i l_i+p'_i m'_i+q'_i l'_i)\in H_1(T).
$$
If $x\in \ker i_1$, then the $l_i$ and $l'_i$ components imply
$
q_i=0, q'_i=0$
for all $i$. The remaining components give
$$
p_i a_i-p'_i a'_i=0\ (i=1,\ldots,n), \text{ and }
\sum_{i=1}^n(p_i b_i+p'_i b'_i)=0.
$$
Put
$
d_i=\gcd(a_i,a'_i), 
\bar{a}_i=\frac{a_i}{d_i}, 
\bar{a}'_i=\frac{a'_i}{d_i}.
$
Then the solutions of $p_i a_i=p'_i a'_i$ are
$
(p_i,p'_i)=s_i(\bar{a}'_i,\bar{a}_i),
\ s_i\in \Z.
$
Therefore
$$
\ker i_1
=
\left\{
\sum_{i=1}^n s_i(\bar{a}'_i m_i+\bar{a}_i m'_i)
\ \middle|\
\sum_{i=1}^n(\bar{a}'_i b_i+\bar{a}_i b'_i)s_i=0
\right\}.
$$

By exactness, we have a short exact sequence
$$
0\to \operatorname{coker} i_2\to H_2(X_0)\to \ker i_1\to 0.
$$
One can see this sequence splits. Hence
$$
H_2(X_0)
\cong
\bigoplus_{l=1}^g \Z\langle \beta_l\times \delta\rangle
\oplus
\left\{
\sum_{i=1}^n s_i(\bar{a}'_i m_i+\bar{a}_i m'_i)
\ \middle|\
\sum_{i=1}^n(\bar{a}'_i b_i+\bar{a}_i b'_i)s_i=0
\right\}.
$$

Finally, from the cokernel of $i_1$ we obtain
$$
H_1(X_0)
\cong
\left\langle
\mu_i,\beta_l,\delta,l_i,l'_i
\ \middle|\
-a_i\mu_i-b_i\delta=0,\;
-\nu_i\mu_i+\rho_i\delta+l_i=0,\;
a'_i\mu_i-b'_i\delta=0,\;
\nu'_i\mu_i+\rho'_i\delta+l'_i=0
\right\rangle .
$$
Eliminating $l_i$ and $l'_i$, this becomes
$$
H_1(X_0)
\cong
\left\langle
\mu_i,\delta,\beta_l
\ \middle|\
-a_i\mu_i-b_i\delta=0,\;
a'_i\mu_i-b'_i\delta=0
\right\rangle .
$$
This is the desired description.
\end{proof}

\begin{rem}\label{general_homology}
A half-tunnel can be incorporated into the above formula by the following convention. Suppose that, for some $j$, we replace a full tunnel by a half-tunnel
$
U_j\times \left[\frac{1}{3},1\right].
$
Then we formally put
$
a_j=1, b_j=0.
$
The corresponding relations in $H_1(X_0)$ become
$
-\mu_j=0, a'_j\mu_j-b'_j\delta=0,
$
and hence reduce to
$
\mu_j=0, b'_j\delta=0.
$

For $H_2(X_0)$, we have
$
d_j=\gcd(a_j,a'_j)=1,
\bar{a}_j=1,
\bar{a}'_j=a'_j.
$
Thus the corresponding generator
$
\bar{a}'_j m_j+\bar{a}_j m'_j
$
formally becomes
$
a'_j m_j+m'_j,
$
and the corresponding coefficient in the relation
$
\sum_i(\bar{a}'_i b_i+\bar{a}_i b'_i)s_i=0
$
becomes $b'_j$. Since a half-tunnel has no corresponding generator $m_j$, the term $a'_j m_j+m'_j$ is replaced simply by $m'_j$.
\end{rem}

\begin{cor}\label{cor:H1-X0-Z2}
The inclusion map 
$$
\iota : H_1(Y;\Z)\to H_1(X_0;\Z)
$$
is surjective. 
In particular, if $H_1(Y;\Z_2)=0$, then $H_1(X_0;\Z_2)=0$.
\end{cor}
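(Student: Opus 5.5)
We compare the presentations of $H_1(Y;\Z)$ from \cref{prop:H1-Seifert} and of $H_1(X_0;\Z)$ from \cref{homologyofx0}. The plan is to show that the inclusion $Y=\partial_-X_0\hookrightarrow X_0$ sends each generator of $H_1(Y)$ to a generator of $H_1(X_0)$, and that every generator of $H_1(X_0)$ is hit. We work with $m=0$ as in \cref{homologyofx0}; half tunnels are handled by the convention of \cref{general_homology}.

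\textbf{Step 1: the image of the fiber and the base classes.} In \cref{homologyofx0}, $H_1(X_0)$ is generated by the classes $\mu_i$ (written $\gamma'_i$ there), by $\delta$, and by $\beta_1,\ldots,\beta_g$, all coming from $W_0\times S^1$. The $S^1$-action on $X_0$ restricts to the Seifert action on $Y$. Hence the regular fiber $\delta$ of $Y$ maps to the regular fiber $\delta$ of $X_0$.

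The classes $\beta_l\in H_1(Y)$ project to curves on $\Sigma_g$. Recall that $Y$ sits at the bottom level $H_g\supset\Sigma_g=\partial H_g$, minus the tunnel disks. We lift these curves to the complement $S_0$ of the singular disks using a section of the circle bundle over $S_0$. Their images in $W_0\times S^1$ are then the classes $\beta_l$ of the handlebody. The classes $\beta_{g+1},\ldots,\beta_{2g}$ bound in $H_g$, so they die, or become combinations of $\mu_i$ and $\delta$; in any case they cause no problem for surjectivity.

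\textbf{Step 2: the meridian classes.} Each tunnel $U_i\times I$ meets the bottom boundary surface in two disks, one near each end, carrying the Seifert data $(a_i,b_i)$ and $(a'_i,b'_i)$. The meridian $\gamma'_i$ of the singular fiber of $Y$ with data $(a'_i,b'_i)$ is the boundary circle of the corresponding disk on $\partial H_g$. In $W_0$ this circle is isotopic to the tunnel meridian $\mu_i$. With the orientation conventions of the gluing maps $\phi_i,\phi'_i$, one end gives $\gamma_i\mapsto-\mu_i$ and the other gives $\gamma'_i\mapsto\mu_i$. So $\mu_i$ is in the image for every $i$.

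Combining Steps 1 and 2, all generators $\mu_i,\delta,\beta_l$ of $H_1(X_0)$ are in the image of $\iota$, and $\iota$ is surjective.

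\textbf{Step 3: the $\Z_2$ statement.} Tensoring with $\Z_2$ is right exact, so $H_1(Y;\Z)\otimes\Z_2\to H_1(X_0;\Z)\otimes\Z_2$ is surjective. By the universal coefficient theorem, $H_1(-;\Z_2)\cong H_1(-;\Z)\otimes\Z_2$, since the Tor term $\mathrm{Tor}(H_0,\Z_2)$ vanishes because $H_0$ is free. Therefore $H_1(Y;\Z_2)=0$ forces $H_1(X_0;\Z_2)=0$.

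\textbf{Main obstacle.} The delicate point is the geometric identification in Steps 1 and 2, namely checking that the generators of the Mayer--Vietoris presentation in \cref{homologyofx0} really are images of curves on $\partial_-X_0=Y$. This means tracking where $Y$ sits inside $W_0\times S^1\cup B$. I would first note that $Y$ is obtained from $(\partial H_g\setminus\text{disks})\times S^1$ by gluing the bottom ends of the solid tori. Then I would check the formulas only up to sign, since signs do not affect surjectivity.
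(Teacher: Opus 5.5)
Your proposal is correct and is essentially the paper's argument. The paper uses the same identification of generators ($\gamma_i\mapsto-\mu_i$, $\gamma'_i\mapsto\mu_i$, $\delta\mapsto\delta$, $\beta_l\mapsto\beta_l$), phrased as saying that the presentation of $H_1(X_0;\Z)$ is the quotient of that of $H_1(Y;\Z)$ by the classes $\eta_i=\gamma_i+\gamma'_i$ and the half of the symplectic basis that bounds in $H_g$. Your generator-by-generator version needs only surjectivity, not that kernel description, and your right-exactness plus universal-coefficient argument spells out the $\Z_2$ consequence that the paper leaves implicit.
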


\begin{proof}
We first show that the inclusion $Y\hookrightarrow X_0$ induces a surjection on first homology. The first homology of $Y$ has the presentation
$$
H_1(Y;\Z)
\cong
\left\langle
\gamma_i,\gamma'_i,\delta,\alpha_l,\beta_l
\ \middle|\
\sum_{i=1}^n(\gamma_i+\gamma'_i)=0,\quad
a_i\gamma_i-b_i\delta=0,\quad
a'_i\gamma'_i-b'_i\delta=0
\right\rangle ,
$$
where $l=1,\ldots,g$ and $\alpha_l,\beta_l$ is a symplectic basis of the genus $g$ part of the base.

Put
$
\eta_i=\gamma_i+\gamma'_i.
$
Then $\gamma_i=\eta_i-\gamma'_i$, and the relation
$
a_i\gamma_i-b_i\delta=0
$
becomes
$
a_i\eta_i-a_i\gamma'_i-b_i\delta=0.
$
The classes $\eta_i$ are killed in $X_0$, and one half of the symplectic basis, say the classes $\alpha_l$, is also killed by the handlebody. Therefore the quotient of $H_1(Y;\Z)$ by the subgroup generated by $\eta_i$ and $\alpha_l$ has the presentation
$$
\left\langle
\gamma'_i,\delta,\beta_l
\ \middle|\
-a_i\gamma'_i-b_i\delta=0,\quad
a'_i\gamma'_i-b'_i\delta=0
\right\rangle .
$$
By \cref{homologyofx0}, this is precisely $H_1(X_0;\Z)$. Hence the inclusion-induced map
$
H_1(Y;\Z)\to H_1(X_0;\Z)
$
is surjective. 
\end{proof}
We next compute the rational intersection form of $X_0$. Let
\[
\operatorname{PD}\colon
H_2(X_0;\mathbb Q)
\overset{\cong}{\longrightarrow}
H^2(X_0,\partial X_0;\mathbb Q)
\]
denote the Poincaré--Lefschetz duality isomorphism, and let $
\iota\colon
H^2(X_0,\partial X_0;\mathbb Q)
\longrightarrow
H^2(X_0;\mathbb Q)$
be the natural map. We define
\[
Q_0\colon
H_2(X_0;\mathbb Q)\otimes H_2(X_0;\mathbb Q)
\longrightarrow\mathbb Q
\]
by
\[
Q_0(x,y)
=
\left\langle
\iota\bigl(\operatorname{PD}(x)\bigr)
\smile
\operatorname{PD}(y),
[X_0,\partial X_0]
\right\rangle .
\]
This is the usual rational intersection form of the compact oriented
$4$--manifold $X_0$, and it is not necessarily nondegenerate.

\begin{prop}\label{prop:Q-intersection-form-on-X0}
Assume $m=0$. For each $i=1,\ldots,n$, set
\[
d_i:=\gcd(a_i,a'_i),
\qquad
\bar a_i:=\frac{a_i}{d_i},
\qquad
\bar a'_i:=\frac{a'_i}{d_i},
\]
as in \Cref{homologyofx0}, and put $\Delta_i:=\bar a_i b'_i+\bar a'_i b_i$.
After reordering the indices, assume that
$
\Delta_n\neq 0.
$
Let
\[
        E_i:=\bar a'_i m_i+\bar a_i m'_i
        \in H_2(X_0;\Q)
        \qquad
        (i=1,\ldots,n).
\]
For $i=1,\ldots,n-1$, define $S_i:=E_i-\frac{\Delta_i}{\Delta_n}E_n$.
Then
$
S_1,\ldots,S_{n-1}
$
form a basis of
\[
V:=
\left\{
\sum_{i=1}^n s_iE_i
\;\middle|\;
\sum_{i=1}^n \Delta_i s_i=0
\right\}
\subset H_2(X_0;\Q).
\]
With respect to this basis, the restriction of the relative rational
intersection form
$
Q_0
$
to $V$ is given by
\[
Q_0(S_i,S_j)
=
-\delta_{ij}\Delta_i \bar a'_i a_i
-
\frac{\Delta_i\Delta_j}{\Delta_n}\bar a'_n a_n
\qquad
(1\leq i,j\leq n-1).
\]
Moreover, $
\bigoplus_{l=1}^g \Q\langle \beta_l\times\delta\rangle
\subset
\operatorname{Rad}(Q_0)$,
where
\[
\operatorname{Rad}(Q_0)
:=
\left\{
x\in H_2(X_0;\Q)
\;\middle|\;
Q_0(x,y)=0
\text{ for all } y\in H_2(X_0;\Q)
\right\}.
\]
In particular, if $g=0$, then $V=H_2(X_0;\Q)$ and the above matrix is the
whole relative rational intersection form on $H_2(X_0;\Q)$.
\end{prop}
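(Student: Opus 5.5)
The plan is to compute the pairings $Q_0(E_i,E_j)$ directly and then change basis to the $S_i$. Each class $E_i=\bar a'_i m_i+\bar a_i m'_i$ lies in $\ker i_1$. By the splitting in \Cref{homologyofx0}, it is represented by a closed surface $F_i\subset X_0$, built as follows. In the solid pieces, take $\bar a'_i$ copies of the meridian disk $D_i\times\{t\}\times\{\eta\}$ and $\bar a_i$ copies of $D'_i\times\{t'\}\times\{\eta'\}$. Their boundaries map under $\phi_i,\phi'_i$ to curves in $\partial U_i\times I\times S^1$ of total class $-(\bar a'_i b_i+\bar a_i b'_i)\delta=-\Delta_i\delta$ in $H_1(W_0\times S^1)$, because the $\mu_i$-parts cancel ($\bar a'_i a_i=\bar a_i a'_i$). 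The remaining $\delta$-multiple cannot be capped inside the $i$-th tunnel alone. This is why only combinations with $\sum\Delta_i s_i=0$ close up: the leftover fiber classes $-\sum s_i\Delta_i\delta$ cancel through an annulus-type chain $c\times\delta$ in $W_0\times S^1$.

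For $g=0$ I would use a different model: a cobordism/plumbing picture of $X_0$ as an $S^1$-bundle over $N$. In this model $Q_0$ is computed as a linking/Euler-type pairing. The key local quantity is the self-intersection contribution of the $i$-th tunnel. Pushing a meridian disk of $D_i$ off itself along the fiber direction, the boundary curves $(\xi^{-a_i}\eta^{-\nu_i},\xi^{-b_i}\eta^{\rho_i})$ on the torus yield an intersection count expressed by the $2\times2$ determinants of the gluing matrices. I expect the contribution to $Q_0(E_i,E_i)$ to be $-\Delta_i\bar a'_i a_i$ (modulo the global $\delta$-term). Distinct tunnels are disjoint, so $Q_0(E_i,E_j)$ for $i\neq j$ receives only the global contribution coming from the capping chain.

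Concretely, I would write
\[
Q_0\Bigl(\sum_i s_iE_i,\sum_j t_jE_j\Bigr)=-\sum_i s_it_i\,\Delta_i\bar a'_i a_i
\]
for $s,t\in V$. Then substituting $S_i=E_i-\frac{\Delta_i}{\Delta_n}E_n$, so that $s=e_i-\frac{\Delta_i}{\Delta_n}e_n$, gives
\[
-\delta_{ij}\Delta_i\bar a'_ia_i-\frac{\Delta_i\Delta_j}{\Delta_n^2}\Delta_n\bar a'_na_n,
\]
which is the claimed formula. That the $S_i$ form a basis of $V$ is immediate: they satisfy the linear constraint, they are independent because of their $E_1,\dots,E_{n-1}$ coordinates, and $\dim V=n-1$ since $\Delta_n\neq0$.

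The main obstacle is justifying the diagonal local formula $-\Delta_i\bar a'_ia_i$ with the correct sign, and showing that the global capping chains contribute nothing extra on $V$. The orientation $-(D_i\times I_1\times S^1)$ enters here. To handle this I would instead compute with the closed-up orbifold. In $\check X$ (or rationally), $\bar a'_i m_i$ is rationally the orbifold disk class $\bar a'_i a_i[\widetilde D_i]/\dots$. On $V$, the combination $\sum s_iE_i$ is rationally homologous to $\sum s_i\bar a'_ia_i$ times a section-like class over the $i$-th singular arc. The pairing then becomes fiber-intersection count times $\Delta_i$, which I would verify against the $n=2$ lens-space case for sign.

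The radical statement is easy. $\beta_l\times\delta$ is represented by a torus $\beta_l\times S^1$ in $W_0\times S^1$ that can be pushed off along $I$ and off any $\beta_{l'}\times\delta$ and $E_i$ representatives. Moreover, $\delta$ pairs trivially with everything since the representatives above are fiber-transverse only inside the tunnels, where the torus is absent. Hence $\beta_l\times\delta\in\operatorname{Rad}(Q_0)$. For $g=0$, \Cref{homologyofx0} gives $H_2(X_0;\Q)=V$.
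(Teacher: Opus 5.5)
Your basis argument, the change-of-basis algebra, and the radical and $g=0$ remarks are fine. The identity you write down, $Q_0\bigl(\sum_k s_kE_k,\sum_k t_kE_k\bigr)=-\sum_k s_kt_k\Delta_k\bar a'_ka_k$ for $s,t\in V$, is indeed equivalent to the stated matrix. But that identity is the entire content of the proposition, and you only assert it, deferring its justification as ``the main obstacle''. Passing to $\check X$ and checking a sign for $n=2$ does not supply it.

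The mechanism you sketch is also misplaced. As you observe, $E_k$ by itself does not close up to a cycle unless $\Delta_k=0$. So $Q_0(E_k,E_j)$, and a self-intersection of the $k$-th tunnel obtained by pushing a meridian disk along the fiber, are not defined. Moreover, for the natural representatives, the disk and annulus parts of two cycles are separated simply by changing the $t$- and $S^1$-coordinates. The capping chains are therefore not a correction that must vanish on $V$; they are the whole source of the pairing.

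The missing step is an explicit count, and it is exactly what the paper carries out with the paths $\alpha^n_{i0},\alpha^n_{i1},\alpha^n_{n0}$. Represent $x=\sum_k s_kE_k\in V$ by:
\begin{itemize}
\item $s_k\bar a'_k$ disks $D_k\times\{t_0\}\times\{1\}$ and $s_k\bar a_k$ disks $D'_k\times\{t_1\}\times\{1\}$;
\item $s_k\bar a'_ka_k$ copies of the annulus $\partial U_k\times[t_0,t_1]\times\{1\}$;
\item a chain $c_s\times\delta$, where $c_s$ is a $1$-chain in $W_0$ whose boundary lies on $\partial U_k\times\{t_0,t_1\}$ with total multiplicity $\pm s_k\Delta_k$ in the $k$-th tunnel.
\end{itemize}
Such a $c_s$ exists precisely because $\sum_k s_k\Delta_k=0$. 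Represent $y=\sum_k t_kE_k$ in the same way, at another point of $S^1$ and over a slightly larger interval $[t''_0,t''_1]\supset[t_0,t_1]$.

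After this push-off, the only intersections are those of $c_s\times\delta$ with the annuli of $y$. The endpoints of $c_s$ in the $k$-th tunnel lie inside the $k$-th annulus of $y$, which carries multiplicity $t_k\bar a'_ka_k$. The endpoints of $c_t$ lie outside the annuli of $x$. This gives $Q_0(x,y)=-\sum_k s_kt_k\Delta_k\bar a'_ka_k$, independently of the choice of $c_s$. The overall sign is fixed by the orientation conventions for $\phi_k,\phi'_k$ and $-(D_k\times I_1\times S^1)$.

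In the $S^1$-bundle language you allude to, this says the following. Let $P_k\subset N$ be the quotient of the $k$-th lens-space boundary component. Then $x$ projects to $\pm\sum_k s_k\operatorname{lcm}(a_k,a'_k)[P_k]$. Over $P_k$ the orbifold circle bundle has rational Euler number $\pm(a_kb'_k+a'_kb_k)/(a_ka'_k)$, and $Q_0$ is the resulting diagonal Euler pairing. Without some such computation, the proposal does not prove the formula.
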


\begin{proof}
We use the notation
\[
\Delta_i=\bar a_i b_i'+\bar a_i' b_i,
\qquad
E_i=\bar a_i' m_i+\bar a_i m'_i .
\]
Recall from the computation of $H_2(X_0;\Q)$ that the part generated by the
full-tunnel pieces is
\[
V=
\left\{
\sum_{i=1}^n s_iE_i
\;\middle|\;
\sum_{i=1}^n \Delta_i s_i=0
\right\}.
\]
Since $\Delta_n\neq0$, every element of $V$ can be written uniquely as
\[
\sum_{i=1}^{n-1} s_i
\left(
E_i-\frac{\Delta_i}{\Delta_n}E_n
\right).
\]
Thus $
S_i:=E_i-\frac{\Delta_i}{\Delta_n}E_n,
\ 
i=1,\ldots,n-1$,
form a basis of $V$.

We now compute the intersection form with respect to this basis. Fix
\[
t_0=\frac{1}{6},\qquad t_1=\frac{5}{6},
\qquad I=[t_0,t_1].
\]
Choose points $x_i\in\partial U_i$, and choose paths in $W_0\times I$
\[
\alpha_{i0}^n:x_i\times t_0\to x_n\times t_1, 
\alpha_{i1}^n:x_i\times t_1\to x_n\times t_1,
\text{ and }
\alpha_{n0}^n:x_n\times t_0\to x_n\times t_1.
\]
Let $\delta$ denote the $S^1$--factor. We represent $S_i$ by the following $2$--cycle:
\[
\begin{aligned}
\Sigma_i
:={}&
\bar a_i'[D_i\times t_0\times 1]
+
\bar a_i[D_i'\times t_1\times 1]
-
\frac{\Delta_i}{\Delta_n}
\left(
\bar a_n'[D_n\times t_0\times 1]
+
\bar a_n[D_n'\times t_1\times 1]
\right)
\\
&+
\bar a_i'a_i\,\partial U_i\times I\times 1
-
\frac{\Delta_i}{\Delta_n}\bar a_n'a_n\,\partial U_n\times I\times 1
-
\bar a_i'b_i\,\alpha_{i0}^n\times\delta
-
\bar a_i b_i'\,\alpha_{i1}^n\times\delta
+
\frac{\Delta_i}{\Delta_n}\bar a_n'b_n\,\alpha_{n0}^n\times\delta.
\end{aligned}
\]
Indeed, the boundary of the disk part is
\[
\begin{aligned}
&
\bar a_i'a_i
\left(
-\gamma_i\times t_0\times1+\gamma_i\times t_1\times1
\right)
-
\bar a_i'b_i\, x_i\times t_0\times\delta
-
\bar a_i b_i'\, x_i\times t_1\times\delta
\\
&\quad
-
\frac{\Delta_i}{\Delta_n}
\bar a_n'a_n
\left(
-\gamma_n\times t_0\times1+\gamma_n\times t_1\times1
\right)
+
\frac{\Delta_i}{\Delta_n}
\left(
\bar a_n'b_n\, x_n\times t_0\times\delta
+
\bar a_n b_n'\, x_n\times t_1\times\delta
\right).
\end{aligned}
\]
The annulus terms cancel the $\gamma_i$ and $\gamma_n$ contributions.  The path
terms cancel the remaining $\delta$--terms.  The cancellation at
$x_n\times t_1$ uses precisely $\Delta_n=\bar a_n'b_n+\bar a_n b_n'$.
Therefore $\Sigma_i$ is a cycle representing $S_i$.

To compute intersections, perturb the second representative.  Let
\[
t_0''=\frac{1}{7},\qquad t_1''=\frac{6}{7},
\qquad I''=[t_0'',t_1''],
\]
and choose a point $1''\in S^1$ with $1''\neq1$.  Let $\Sigma_j''$ be the cycle
obtained from $\Sigma_j$ by replacing $t_0,t_1,I,1$ and the paths
$\alpha^n_{j0},\alpha^n_{j1},\alpha^n_{n0}$ by
$t_0'',t_1'',I'',1''$ and nearby paths
\[
\alpha^{\prime\prime n}_{j0},\qquad
\alpha^{\prime\prime n}_{j1},\qquad
\alpha^{\prime\prime n}_{n0}.
\]

After this perturbation, the only intersections contributing to
$\Sigma_i\bullet\Sigma_j''$ are intersections between the path terms in
$\Sigma_i$ and the annulus terms in $\Sigma_j''$.  We arrange the paths so that
\[
(\alpha_{i0}^n\times\delta)\bullet
(\partial U_j\times I''\times1'')
=
\delta_{ij},
\]
\[
(\alpha_{i1}^n\times\delta)\bullet
(\partial U_j\times I''\times1'')
=
\delta_{ij},
\]
and
\[
(\alpha_{i0}^n\times\delta)\bullet
(\partial U_n\times I''\times1'')
=
(\alpha_{i1}^n\times\delta)\bullet
(\partial U_n\times I''\times1'')
=
-1.
\]
Moreover, $
(\alpha_{n0}^n\times\delta)\bullet
(\partial U_n\times I''\times1'')=0$,
and all other intersections are zero.

Hence
\[
\begin{aligned}
Q_0(S_i,S_j)
&=
\Sigma_i\bullet\Sigma_j''
\\
&=
(-\bar a_i'b_i-\bar a_i b_i')
\left(
\delta_{ij}\bar a_j'a_j
+
\frac{\Delta_j}{\Delta_n}\bar a_n'a_n
\right)
\\
&=
-\Delta_i
\left(
\delta_{ij}\bar a_j'a_j
+
\frac{\Delta_j}{\Delta_n}\bar a_n'a_n
\right)
\\
&=
-\delta_{ij}\Delta_i\bar a_i'a_i
-
\frac{\Delta_i\Delta_j}{\Delta_n}\bar a_n'a_n.
\end{aligned}
\]
This is the desired formula.

Finally, the classes $\beta_l\times\delta$ are represented entirely in the
product part $W_0\times S^1$.  By choosing the paths $\alpha_{i0}^n$,
$\alpha_{i1}^n$, and $\alpha_{n0}^n$ disjoint from the curves $\beta_l$ in
$W_0$, the representatives $\Sigma_i$ can be made disjoint from
$\beta_l\times\delta$.  Also, two classes of the form $\beta_l\times\delta$
can be made disjoint after a small perturbation in the $S^1$--direction.
Therefore
\[
\bigoplus_{l=1}^g \Q\langle \beta_l\times\delta\rangle
\subset
\operatorname{Rad}(Q_0),
\]
where
$
\operatorname{Rad}(Q_0)
=
\left\{
x\in H_2(X_0;\Q)
\;\middle|\;
Q_0(x,y)=0
\text{ for all } y\in H_2(X_0;\Q)
\right\}$.

When $g=0$, there are no such classes $\beta_l\times\delta$, and the summand
$V$ is all of $H_2(X_0;\Q)$.  Hence the displayed matrix gives the whole
relative rational intersection form on $H_2(X_0;\Q)$.
\end{proof}

We next discuss spin structures on $X_0$. In this subsection, we focus on the case $g=0$ and $m=0$.

Recall that
\[
X_0(\boldsymbol{\nu},\boldsymbol{\rho})
=
W_0\times S^1
\cup_{\{\phi_i,\phi'_i\}_{i=1}^n}
\coprod_{i=1}^n
\left(
-\bigl(D_i\times I_1\times S^1\bigr)
\sqcup
\bigl(D'_i\times I_2\times S^1\bigr)
\right),
\]
where $
I_1=\left[0,\frac13\right], 
I_2=\left[\frac23,1\right]$,
and
\[
\begin{aligned}
\phi_i(\xi,t,\eta)
&=
\left(
\xi^{-a_i}\eta^{-\nu_i},
t,
\xi^{-b_i}\eta^{\rho_i}
\right),\\
\phi'_i(\xi',t,\eta')
&=
\left(
\xi'{}^{a'_i}\eta'{}^{\nu'_i},
t,
\xi'{}^{-b'_i}\eta'{}^{\rho'_i}
\right).
\end{aligned}
\]

For each oriented circle, we identify its two spin structures with
$\Z_2$ by the convention
\[
0=\text{the  nonbounding or Lie group spin structure},
\qquad
1=\text{the bounding spin structure}.
\]

Let
\[
T_i^-=
\partial U_i\times\left\{\frac13\right\}\times S^1,
\qquad
T_i^+=
\partial U_i\times\left\{\frac23\right\}\times S^1
\]
be the two gluing tori in the product piece. On each of these tori, let
\[
\alpha_i=
\partial U_i\times\{t\}\times\{1\},
\qquad
\delta=
\{1\}\times\{t\}\times S^1
\]
denote, respectively, the boundary-circle direction of the $i$-th
tunnel and the regular $S^1$-fiber direction.

\begin{defn}
For a spin structure $\mathfrak{s}$ on $X_0$, define $\lambda_i,\mu\in\Z_2$
by requiring that the restriction of $\mathfrak{s}$ to either
$T_i^-$ or $T_i^+$ is represented by $(\lambda_i,\mu)$
with respect to the ordered circle directions $(\alpha_i,\delta)$.
\end{defn}
The parameter $\lambda_i$ is the same at the two ends of the tunnel
because the corresponding $\alpha_i$-circles cobound an annulus in
$\partial U_i\times I\times\{1\}$. The parameter $\mu$ is independent
of $i$ because it records the restriction to the global $S^1$-fiber.

On $
\partial D_i\times\left\{\frac13\right\}\times S^1$,
let
\[
m_i=
\partial D_i\times\left\{\frac13\right\}\times\{1\},
\qquad
\ell_i=
\{1\}\times\left\{\frac13\right\}\times S^1.
\]
Since $m_i$ bounds the disk $D_i$, the restriction of $\mathfrak{s}$
to this torus is represented by $(1,e_i)$
with respect to $(m_i,\ell_i)$, for a uniquely determined
$e_i\in\Z_2$.

Similarly, on $\partial D'_i\times\left\{\frac23\right\}\times S^1$,
let
\[
m'_i=
\partial D'_i\times\left\{\frac23\right\}\times\{1\},
\qquad
\ell'_i=
\{1\}\times\left\{\frac23\right\}\times S^1.
\]
The restriction of $\mathfrak{s}$ to this torus is represented by $(1,e'_i)$
with respect to $(m'_i,\ell'_i)$, for a uniquely determined
$e'_i\in\Z_2$.

\begin{prop}\label{prop:spin structures on X_0}
The set of isomorphism classes of spin structures on
$X_0(\boldsymbol{\nu},\boldsymbol{\rho})$ is naturally identified with
the set of tuples
\[
\lambda_i,e_i,e'_i,\mu\in\Z_2,
\qquad i=1,\ldots,n,
\]
satisfying
\[
(\lambda_i,\mu)
\begin{pmatrix}
-a_i & -\nu_i\\
-b_i & \rho_i
\end{pmatrix}
\equiv
(1,e_i)
 \text{ and }
(\lambda_i,\mu)
\begin{pmatrix}
a'_i & \nu'_i\\
-b'_i & \rho'_i
\end{pmatrix}
\equiv
(1,e'_i)
\pmod2
\]
for every $i=1,\ldots,n$.
\end{prop}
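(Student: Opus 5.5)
The plan is to build spin structures on $X_0=X_0(\boldsymbol{\nu},\boldsymbol{\rho})$ piece by piece from the decomposition into $W_0\times S^1$ and the solid pieces $D_i\times I_1\times S^1$, $D'_i\times I_2\times S^1$, and to glue them along the tori $T_i^\pm$. For $g=0$ and $m=0$, $W_0$ is a $3$-ball with $n$ tunnels drilled through, so $W_0\simeq\bigvee_{i=1}^n S^1$, generated by the meridians $\alpha_i$. Hence $W_0\times S^1$ is homotopy equivalent to $\bigl(\bigvee_i S^1\bigr)\times S^1$, and
\[
H^1(W_0\times S^1;\Z_2)\cong \Z_2^{\,n+1},
\]
with basis dual to $\alpha_1,\ldots,\alpha_n,\delta$. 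Since this piece is a parallelizable $4$-manifold, its spin structures form an $H^1(\,\cdot\,;\Z_2)$-torsor. Using the product framing, a spin structure is recorded exactly by its restrictions to the circles $\alpha_i$ and $\delta$, that is, by the tuple $(\lambda_1,\ldots,\lambda_n,\mu)$. The two remarks before the statement show that these values are well defined: the annulus $\partial U_i\times I\times\{1\}$ carries $\alpha_i$ from $T_i^-$ to $T_i^+$, and $\delta$ is the global fiber.

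Next I treat the solid pieces. Each $D_i\times I_1\times S^1$ deformation retracts to a circle, so it has exactly two spin structures, indexed by $e_i\in\Z_2$ on $\ell_i$. The meridian $m_i$ bounds the disk $D_i$, so it must carry the bounding structure $1$. The same holds for $D'_i\times I_2\times S^1$, with parameter $e'_i$.

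Spin structures on $X_0$ then correspond to compatible pieces glued along the $2n$ tori, and I expect this to be the main obstacle. Two points need care. First, the gluing tori are disjoint and each has $H^0=\Z_2$. The automorphism ambiguity at the gluings is already absorbed, since we classify isomorphism classes and each piece is connected. In Mayer--Vietoris terms, the restriction
\[
H^0(\text{pieces};\Z_2)\to H^0(T;\Z_2)
\]
has cokernel controlling ambiguity. I would argue directly instead: a spin structure on $X_0$ is determined by its restrictions, because any $H^1(X_0;\Z_2)$ class restricting to zero on all pieces comes from $H^0(T)$ modulo the pieces' $H^0$, and one checks this is zero. The check is a count on the graph of pieces and tori: it is a tree-like incidence structure only if its first Betti number vanishes. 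Here the solid pieces are leaves attached to the single central piece $W_0\times S^1$, so the graph is a tree and no extra gluing twist appears.

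Second, compatibility on each torus becomes the stated matrix condition. The gluing $\phi_i$ maps the classes $(m_i,\ell_i)$ to $(-a_i\alpha_i-b_i\delta,\,-\nu_i\alpha_i+\rho_i\delta)$, as used in the proof of \Cref{homologyofx0}. A spin structure on $T^2$ is a $\Z_2$-valued function on $H_1(T^2;\Z_2)$ that is linear with respect to the Lie framing, after shifting by the framing convention. Pulling back $(\lambda_i,\mu)$ through this map therefore gives the row vector $(\lambda_i,\mu)M_i$ mod $2$, where $M_i$ is the gluing matrix. This must equal $(1,e_i)$, and similarly for the primed pieces.

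The delicate step is verifying that the linear pull-back rule is correct with the convention $0=$ Lie structure. The quadratic correction term for a spin structure on a sum, $q(x+y)=q(x)+q(y)+x\cdot y$, vanishes here because the framings come from product structures. The orientation reversal on the $-(D_i\times I_1\times S^1)$ pieces also has to be shown not to affect the spin parity. Conversely, any tuple satisfying the congruences gives compatible local structures, and these glue to a spin structure on $X_0$, which yields the bijection.
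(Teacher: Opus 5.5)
Your proposal follows the paper's proof. Spin structures on $W_0\times S^1$ are parametrized by $(\lambda_1,\ldots,\lambda_n,\mu)$, those on each solid piece by $e_i$ or $e'_i$ with the meridian forced to be bounding, and the gluing along $T_i^\pm$ is imposed through $(\epsilon_1,\epsilon_2)\mapsto(\epsilon_1,\epsilon_2)A \pmod 2$; your tree-shaped Mayer--Vietoris remark also supplies the injectivity that the paper leaves implicit.

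Only your justification of the pull-back rule needs fixing: $x\cdot y$ does not vanish (here $\alpha_i\cdot\delta=1$). Instead, the Lie structure is the unique one in which every essential curve is nonbounding. It is therefore fixed by every diffeomorphism of $T^2$, including $\phi_i$, whose matrix has determinant $-1$, which also settles your orientation concern. Taking it as origin makes the action the linear one on $H^1(T^2;\Z_2)$. Equivalently, with the convention $0=$ nonbounding one has $\epsilon=q+1$ on primitive classes, and this shift cancels the term $x\cdot y=1$.
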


\begin{proof}
All restrictions of spin structures to the circle factors below are
understood using the product normal framings determined by the displayed
product decompositions.

We first recall the action of a torus diffeomorphism on spin structures.
Let $T^2=S^1\times S^1$
with its ordered coordinate circles. A spin structure on \(T^2\) is represented by an element
$(\epsilon_1,\epsilon_2)\in\Z_2^2$.
The spin structure represented by \((0,0)\) is the Lie-group spin
structure induced by the product framing. Equivalently, it is the unique
odd spin structure on \(T^2\). It is therefore preserved by every
orientation-preserving diffeomorphism of \(T^2\).

Consequently, after taking \((0,0)\) as the origin of the affine space
of spin structures, the pullback action is the usual action on
\(H^1(T^2;\Z_2)\). More precisely, suppose that $f:T^2\longrightarrow T^2$
is an orientation-preserving diffeomorphism and that $f_*:H_1(T^2;\Z)\longrightarrow H_1(T^2;\Z)$
is represented by
\[
A=
\begin{pmatrix}
p&q\\
r&s
\end{pmatrix}
\in SL(2,\Z)
\]
with respect to the ordered coordinate circles. Then $
f^*(\epsilon_1,\epsilon_2)
=
(\epsilon_1,\epsilon_2)A
\pmod 2$.

We apply this observation to the decomposition of \(X_0\). Put $
P=W_0\times S^1$,
and denote the solid pieces by
\[
B_i^-=D_i\times I_1\times S^1,
\qquad
B_i^+=D'_i\times I_2\times S^1.
\]
Since \(g=0\) and \(m=0\), the manifold \(W_0\) is a handlebody of
genus \(n\). The classes represented by $
\alpha_1,\ldots,\alpha_n$
form a basis of \(H_1(W_0;\Z_2)\). Hence
$
H^1(P;\Z_2)
\cong
\bigoplus_{i=1}^n\Z_2\langle\alpha_i^*\rangle
\oplus
\Z_2\langle\delta^*\rangle$.

Choose a reference spin structure on \(P\) whose restrictions to all
the circles \(\alpha_i\) and to the regular fiber \(\delta\) are
nonbounding. Since the set of spin structures on \(P\) is an affine
space over \(H^1(P;\Z_2)\), every spin structure on \(P\) is uniquely
specified by $
\lambda_1,\ldots,\lambda_n,\mu\in\Z_2$.
Its restriction to either of the two tori
\[
T_i^-=
\partial U_i\times\left\{\frac13\right\}\times S^1,
\qquad
T_i^+=
\partial U_i\times\left\{\frac23\right\}\times S^1
\]
is represented by $(\lambda_i,\mu)$
with respect to the ordered circle directions $(\alpha_i,\delta)$.
The same value of \(\lambda_i\) occurs at the two ends because the two
copies of \(\alpha_i\) cobound the annulus $\partial U_i\times
\left[\frac13,\frac23\right]\times\{1\}$.

Next, each solid piece \(B_i^-\) deformation retracts onto its
\(S^1\)-factor. Thus its spin structures are parametrized by a unique
element $e_i\in\Z_2$,
which records the restriction to the longitudinal circle \(\ell_i\).
On the boundary torus, the meridian \(m_i\) bounds the disk \(D_i\).
Its induced spin structure is therefore bounding. Hence the restriction
of the spin structure on \(B_i^-\) is represented by
$(1,e_i)$
with respect to the ordered circle directions \((m_i,\ell_i)\).

Similarly, the spin structures on \(B_i^+\) are parametrized by
\(e'_i\in\Z_2\), and their boundary restrictions are represented by $
(1,e'_i)$
with respect to \((m'_i,\ell'_i)\).

We now impose the gluing conditions. The first gluing map is
\[
\phi_i(\xi,t,\eta)
=
\left(
\xi^{-a_i}\eta^{-\nu_i},
t,
\xi^{-b_i}\eta^{\rho_i}
\right).
\]
Thus, on first homology,
\[
(\phi_i)_*[m_i]
=
-a_i[\alpha_i]-b_i[\delta],
\qquad
(\phi_i)_*[\ell_i]
=
-\nu_i[\alpha_i]+\rho_i[\delta].
\]
With respect to the ordered bases $(m_i,\ell_i)$ and $
(\alpha_i,\delta)$
the induced homomorphism is represented by $
A_i^-=
\begin{pmatrix}
-a_i&-\nu_i\\
-b_i&\rho_i
\end{pmatrix}$.
It follows from the torus calculation above that the pullback by
\(\phi_i\) of the spin structure represented by
\((\lambda_i,\mu)\) is represented by $(\lambda_i,\mu)A_i^-$.
Compatibility with the spin structure on \(B_i^-\) is therefore
equivalent to
\[
(\lambda_i,\mu)
\begin{pmatrix}
-a_i&-\nu_i\\
-b_i&\rho_i
\end{pmatrix}
\equiv
(1,e_i)
\pmod2.
\]
Compatibility of the second map is proven in a similar way. This completes the proof.

\end{proof}

We next describe the spin structures on the boundary Seifert manifold
in the type A case.  Write
\[
Y=
M(0;(a''_1,b''_1),(a_2,b_2),(a'_2,b'_2),\ldots,
(a_{n-1},b_{n-1}),(a'_{n-1},b'_{n-1}),(a''_n,b''_n)).
\]
Let \(S_0\) denote the complement in \(S^2\) of disjoint disks around
the corresponding Seifert fibers.  Thus
\[
Y=
S_0\times S^1
\cup
\left(
\coprod_{i=2}^{n-1}
\bigl(-D_i\times S^1\bigr)
\sqcup
\coprod_{i=2}^{n-1}
\bigl(D'_i\times S^1\bigr)
\sqcup
D''_1\times S^1
\sqcup
D''_n\times S^1
\right).
\]

For a spin structure \(\mathfrak{s}\) on \(Y\), let $
(\lambda_i,\mu),\ 
(\lambda'_i,\mu),\ 
(\lambda''_j,\mu)$
denote its restrictions to the boundary tori in \(S_0\times S^1\),
with respect to the ordered directions given by the boundary circle of
\(S_0\) and the regular \(S^1\)-fiber.  Here $
2\leq i\leq n-1,\ 
j=1,n$.
On the corresponding Seifert solid tori, write the boundary restrictions
as $
(1,e_i),\ 
(1,e'_i),\ 
(1,e''_j),$
with respect to the meridional and longitudinal directions.

\begin{prop}\label{prop:spin structures on type-A Y}
The set of isomorphism classes of spin structures on \(Y\) is naturally
identified with the set of tuples
\[
\mu\in\Z_2,\ 
\lambda_i,\lambda'_i,e_i,e'_i\in\Z_2
(2\leq i\leq n-1),
\text{ and }
\lambda''_j,e''_j\in\Z_2
\qquad
(j=1,n),
\]
satisfying $
\lambda''_1+\lambda''_n
+
\sum_{i=2}^{n-1}
\bigl(\lambda_i+\lambda'_i\bigr)
\equiv 0
\pmod 2$,
together with
\[
(\lambda_i,\mu)
\begin{pmatrix}
-a_i&-\nu_i\\
-b_i&\rho_i
\end{pmatrix}
\equiv
(1,e_i) \text{ and }
(\lambda'_i,\mu)
\begin{pmatrix}
a'_i&\nu'_i\\
-b'_i&\rho'_i
\end{pmatrix}
\equiv
(1,e'_i)
\pmod 2
\qquad
(2\leq i\leq n-1),
\]
and
\[
(\lambda''_j,\mu)
\begin{pmatrix}
a''_j&\nu''_j\\
-b''_j&\rho''_j
\end{pmatrix}
\equiv
(1,e''_j)
\pmod 2
\qquad
(j=1,n).
\]

Moreover, under the restriction map $
\operatorname{Spin}(X_0)\longrightarrow\operatorname{Spin}(Y)$,
the image consists precisely of the tuples satisfying
\[
\lambda_i=\lambda'_i
\ 
(2\leq i\leq n-1), \ 
\lambda''_1=\lambda''_n=1.
\]
\end{prop}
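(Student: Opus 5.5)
The proof has two parts: classify $\operatorname{Spin}(Y)$ by the same cut-and-paste argument as \cref{prop:spin structures on X_0}, then identify the image of restriction from $X_0$.

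\textbf{Step 1: spin structures on $Y$.} Decompose $Y = S_0\times S^1 \cup (\text{solid tori})$. Here $S_0$ is a planar surface with $2(n-2)+2$ boundary circles. The boundary circles $\alpha_i,\alpha'_i,\alpha''_j$ generate $H_1(S_0;\Z_2)$, subject to the single relation that their sum vanishes. Hence a spin structure on $S_0\times S^1$ is determined by its restrictions $(\lambda_\bullet,\mu)$ to the boundary tori.

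These restrictions must satisfy one constraint. The restriction of a spin structure on $S_0$ to $\partial S_0$, with the bounding circle recorded as $1$, has total parity $\chi(S_0)\equiv \#\partial S_0 \pmod 2$. In the convention used here, this is exactly the condition $\lambda''_1+\lambda''_n+\sum(\lambda_i+\lambda'_i)\equiv 0 \pmod 2$. I will double-check this against the sign convention of \cref{prop:spin structures on X_0}; for instance, the disk case gives $\lambda=1$, which has to be consistent with the count.

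Conversely, any $(\lambda_\bullet,\mu)$ satisfying this parity condition arises, since the affine space has dimension $(\#\partial S_0 - 1)+1$. Each solid torus then contributes a unique $e_\bullet$. Gluing compatibility is again the torus-pullback rule $f^*(\epsilon)=\epsilon A \pmod 2$ proved earlier, applied to the matrices of $\phi_i,\phi'_i,\phi''_j$.

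Since the decomposition has no further $H^1$ (the Mayer--Vietoris gluing of the solid tori kills no extra freedom beyond the $e$'s), isomorphism classes correspond bijectively to the listed tuples.

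\textbf{Step 2: the image of $\operatorname{Spin}(X_0)\to\operatorname{Spin}(Y)$.} First identify how $Y=\partial_-X_0$ sits in $X_0$ in the type A configuration. The pieces indexed $2\le i\le n-1$ are full tunnels whose two ends both meet $Y$. The two end pieces $j=1,n$ are half tunnels, each with one end on $Y$ and the other end capped by a lens space boundary component.

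Take a spin structure on $X_0$ as described in \cref{prop:spin structures on X_0}, with parameters $\lambda_i,\mu,e_\bullet$. For a full tunnel, the two boundary circles $\alpha_i,\alpha'_i$ on $Y$ are the two ends of the annulus $\partial U_i\times I\times\{1\}$. So the restriction gives $\lambda_i=\lambda'_i$, and the $e_i,e'_i$ are carried over unchanged.

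For a half tunnel, the circle $\alpha''_j$ bounds a disk in $H_0$: the tunnel is only half, so $U''_j\times\{0\}$ is a meridian disk in $W_0$. The restriction of the spin structure to $\alpha''_j$ is therefore bounding, giving $\lambda''_j=1$. This forces the necessary conditions.

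For sufficiency, start from a tuple on $Y$ with $\lambda_i=\lambda'_i$ and $\lambda''_1=\lambda''_n=1$. Define $\lambda_i$ and $\mu$ on $W_0\times S^1$ from it, which is possible because these circles form a $\Z_2$-basis of $H_1(W_0)$ together with $\delta$. Then take $e_i,e'_i,e''_j$ from $Y$. The gluing equations on $X_0$ are literally the same equations as on $Y$, so they hold. Restricting this spin structure to $Y$ recovers the given one, because spin structures on $Y$ are determined by the data in Step 1.

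\textbf{Main obstacle.} The delicate point is the parity bookkeeping in Step 1 together with the claim $\lambda''_j=1$ for the half tunnels. Both depend on the orientation and framing conventions ($0$ = nonbounding, product normal framings), and on correctly identifying which circles bound in $W_0$ once the half tunnels are only partially removed. I would verify these with the simplest model, a single solid torus, before trusting the general count.
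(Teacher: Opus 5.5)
Your proposal is correct and follows the paper's own proof. The ingredients match: the affine parametrization of spin structures on $S_0\times S^1$ with the parity condition coming from spin null-bordism of $\partial S_0$, the torus gluing rule, the annulus argument giving $\lambda_i=\lambda'_i$, and the bounding-disk argument giving $\lambda''_j=1$; your injectivity, dimension-count and Mayer--Vietoris remarks fill in details the paper leaves implicit. One small slip: $\alpha''_j=\partial U''_j\times\{1\}$ does not bound $U''_j\times\{0\}$ itself, but it does bound the disk $\partial U''_j\times\left[\frac13,1\right]\cup U''_j\times\left\{\frac13\right\}\subset W_0$, and that disk is what the argument needs. Near $\alpha''_j$ this disk is the vertical tunnel wall, so the normal framing it induces differs from the product framing only by a constant rotation, which settles the framing concern you raised.
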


\begin{proof}
The set of spin structures on \(S_0\times S^1\) is an affine space over
\[
H^1(S_0\times S^1;\Z_2)
\cong
H^1(S_0;\Z_2)\oplus\Z_2\langle\delta^*\rangle.
\]
The second summand is recorded by \(\mu\). The surface \(S_0\) is a sphere with \(2n-2\) boundary components.
For any spin structure on \(S_0\), the disjoint union of the induced
spin structures on its boundary is spin null-bordant.  Hence the number
of boundary components carrying the nonbounding spin structure is even.
Since \(S_0\) has an even number of boundary components, this condition
is equivalent, under our convention, to
\[
\lambda''_1+\lambda''_n
+
\sum_{i=2}^{n-1}
\bigl(\lambda_i+\lambda'_i\bigr)
\equiv0
\pmod2.
\]
Each Seifert solid torus has two spin structures.  Its meridian bounds a
disk and therefore carries the bounding spin structure.  Thus its
restriction to the boundary torus is of the form
\[
(1,e_i),\qquad
(1,e'_i),\qquad
(1,e''_j).
\]
By the calculation used in
\cref{prop:spin structures on X_0}, the gluing maps act on the boundary
spin structures by the corresponding matrices.  Therefore the matching
conditions are exactly
\[
(\lambda_i,\mu)
\begin{pmatrix}
-a_i&-\nu_i\\
-b_i&\rho_i
\end{pmatrix}
\equiv
(1,e_i),\ 
(\lambda'_i,\mu)
\begin{pmatrix}
a'_i&\nu'_i\\
-b'_i&\rho'_i
\end{pmatrix}
\equiv
(1,e'_i),
\]
and
\[
(\lambda''_j,\mu)
\begin{pmatrix}
a''_j&\nu''_j\\
-b''_j&\rho''_j
\end{pmatrix}
\equiv
(1,e''_j)
\pmod2.
\]

Finally, a spin structure extending over \(X_0\) has the same parameter
at the two ends of every full tunnel, and hence $\lambda_i=\lambda'_i$.
At a half tunnel, the corresponding boundary circle bounds in the
product piece of \(X_0\), so its induced spin structure is bounding;
thus 
$
\lambda''_j=1$.
Conversely, these conditions allow the spin structure on
\(S_0\times S^1\) to extend over the product piece and all full and half
tunnels of \(X_0\).  This proves the final assertion.
\end{proof}

\section{An involution on the oribfold $X$}
\label{section:Involution}
In this section, we again fix a closed orbifold surface
$$
\check{\Sigma}
=
\check{\Sigma}(g;a_1,\ldots,a_n,a'_1,\ldots,a'_n,a''_1,a''_2)
$$
together with a Seifert fibration
$$
Y
=
M(g;(a_1,b_1),\ldots,(a_n,b_n),(a'_1,b'_1),\ldots,(a'_n,b'_n),(a''_1,b''_1),(a''_2,b''_2))
\to
\check{\Sigma}.
$$
We shall classify involutions $\tau:Y\to Y$ satisfying the assumptions of \cref{classify_under}. Then $\tau$ induces an involution
$
\overline{\tau}:\check{\Sigma}\to\check{\Sigma}.
$
By \cref{classify_over}, after reordering the orbifold points, $\overline{\tau}$ is conjugate to one of the following three cases: 
\begin{itemize}[leftmargin=2pt,itemindent=4em,labelsep=0.5em]
    \item[{\bf \underline{Type A}}]
    We have $g=0$, and $\overline{\tau}:S^2\to S^2$ is the
    $180$--degree rotation about the $z$--axis. The points of orders
    $a''_1$ and $a''_2$ are fixed, while the point of order $a_i$ is
    interchanged with the point of order $a'_i$. In particular,
    \[
    a_i=a'_i
    \qquad
    (i=1,\ldots,n).
    \]

    \item[{\bf \underline{Type B}}]
    We have $g=1$, and $\overline{\tau}:T^2\to T^2$ is a non-trivial
    translation of order two, for example
    \[
    (x,y)\longmapsto
    \left(x+\frac12,y\right).
    \]
    This involution has no fixed points. Hence all orbifold points are
    paired: the point of order $a_i$ is interchanged with the point of
    order $a'_i$, so $
    a_i=a'_i$.
    Under the convention used above, the remaining two points are
    ordinary points, and hence $
    a''_1=a''_2=1.$

    \item[{\bf \underline{Type C}}]
    We have $
    \overline{\tau}=\id_{\check{\Sigma}}.$ 
    In this case, there is no restriction on the orbifold orders.
\end{itemize}

We now describe the involution induced on the boundary Seifert manifold. This is the restriction to
$
\partial_-X_0=Y
$
of the involution on $X_0$ constructed above.

Recall that the boundary component $Y$ is obtained from the following boundary faces of $X_0$. The product part is
$$
S_0\times S^1
=
(\partial W_0\cap \partial H_g)\times S^1
\subset \partial(W_0\times S^1).
$$
For each full tunnel $U_i\times I$, the two Seifert solid tori in $Y$ are the boundary faces
$$
-D_i\times\{0\}\times S^1
\subset
-(D_i\times I_1\times S^1),
\qquad
D'_i\times\{1\}\times S^1
\subset
D'_i\times I_2\times S^1.
$$
We identify them with $-D_i\times S^1$ and $D'_i\times S^1$, respectively. Similarly, for each half tunnel $U''_j\times I''$, the corresponding Seifert solid torus in $Y$ is
$$
D''_j\times\{1\}\times S^1
\subset
D''_j\times I_2\times S^1,
$$
which we identify with $D''_j\times S^1$.

Now, we construct involutions of type A, for a fixed $\lambda\in\{\pm1\}$. 
\begin{defn}
We define an involution
$
\tau_\lambda:Y\to Y
$
as follows.

\begin{itemize}[leftmargin=0pt,itemindent=1.5em,labelsep=0.5em]
    \item 
On the product part $S_0\times S^1$, its restriction is given near the boundary components by
$$
(u,v)\mapsto (u^{-1},\lambda v)
$$
on the paired components $\partial U_i\times S^1$ and $\partial U'_i\times S^1$, and by
$$
(u'',v'')\mapsto (-u'',\lambda v'')
$$
on each fixed component $\partial U''_j\times S^1$.

\item On the Seifert solid tori coming from the full tunnels, the restriction is
$$
\tau_{\lambda i}:
-D_i\times S^1\to D'_i\times S^1,
\qquad
(\xi,\eta)\mapsto(\lambda^{-\nu_i}\xi,\lambda^{a_i}\eta),
$$
and
$$
\tau'_{\lambda i}:
D'_i\times S^1\to -D_i\times S^1,
\qquad
(\xi',\eta')\mapsto(\lambda^{-\nu_i}\xi',\lambda^{a_i}\eta').
$$
\item 
On the Seifert solid torus coming from the half tunnel, the restriction is
$$
\tau''_{\lambda j}:
D''_j\times S^1\to D''_j\times S^1,
\qquad
(\xi'',\eta'')
\mapsto
(\lambda^{-\nu''_j}\zeta_2^{\rho''_j}\xi'',
\lambda^{a''_j}\zeta_2^{b''_j}\eta'').
$$
Equivalently, in the orbifold solid-torus charts, these maps are
$
[z,w]\mapsto [z,\lambda w]
$
on the paired components, and
$
[z,w]\mapsto [\zeta_{2a''_j}z,\lambda\zeta_{2a''_j}^{b''_j}w]
$
on the fixed component of type $(a''_j,b''_j)$.
\end{itemize}

\end{defn}
One can check the compatibilities of these involutions. 
Hence the above maps glue together to define an involution
$
\tau_\lambda:Y\to Y
$
commuting with the Seifert $S^1$--action.

This actually classifies certain involutions in the following sense:

\begin{prop}\label{prop:classification-tau-lambda}

Let $\tau:Y\to Y$ be an orientation-preserving involution which commutes with the Seifert $S^1$--action of type A. 
Then, after conjugating $\tau$ by a fiber-preserving diffeomorphism of $Y$ preserving the Seifert data, $\tau$ is of the form
$
\tau_\lambda
$
for some $\lambda\in\{\pm1\}$ as in the above definition.
\end{prop}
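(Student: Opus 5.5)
The plan is to reduce the statement to the lift classification already obtained in \cref{classify_over}. Since $\tau$ is of type A, the induced involution $\overline{\tau}$ on $\check{\Sigma}$ is, after a fiber-preserving conjugation, the $180$-degree rotation of $S^2$. This rotation fixes the two orbifold points of orders $a''_1,a''_2$ and swaps the point of order $a_i$ with the point of order $a'_i$, so $a_i=a'_i$ for all $i$. We may also arrange the Seifert data so that $(a_i,b_i)$ and $(a'_i,b'_i)$ correspond under $\overline{\tau}$. This uses that $\tau$ preserves orientation and the $S^1$-action, so the local invariants of swapped fibers agree, and that the normalized chart parameters $\nu_i,\rho_i$ can be chosen equal for paired fibers.

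Next we check that each $\tau_\lambda$ is a genuine lift of this same $\overline{\tau}$. On $S_0\times S^1$, the maps $(u,v)\mapsto(u^{-1},\lambda v)$ and $(u'',v'')\mapsto(-u'',\lambda v'')$ cover the rotation near the respective boundary circles. The solid-torus formulas are the conjugates of $[z,w]\mapsto[z,\lambda w]$, respectively $[z,w]\mapsto[\zeta_{2a''_j}z,\lambda\zeta_{2a''_j}^{b''_j}w]$, under the charts $\phi_i$ and $\phi''_j$. We verify the matching by substituting into $\phi_i$, using $a_i\rho_i+b_i\nu_i=1$. These are the compatibilities asserted after the definition, and we take them as given.

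By \cref{classify_over}, the lifts of the rotation to $Y$ up to fiber-preserving conjugacy form exactly two classes. They correspond to the two elements of $(p^*)^{-1}([Y])\subset\operatorname{Pic}^t(\check{S}_*)$ and are interchanged by composition with the $(-1)$-gauge transformation. Since $\tau_{-1}=(-1)\circ\tau_{1}$ on every piece (the factor $\lambda$ enters only through multiplication of the fiber coordinate in the orbifold charts), the two maps $\tau_{\pm1}$ lie in different classes. Hence they represent both classes, and $\tau$ is conjugate to one of them.

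The main obstacle is justifying the identification ``lift up to fiber-preserving conjugacy $\leftrightarrow$ descent of the orbifold circle bundle,'' together with the claim that the conjugating diffeomorphism can be taken to preserve the Seifert data. The natural route is to say that a lift $\tau$ makes $Y/\tau\to\check{S}_*$ an orbifold circle bundle. A fiber-preserving isomorphism between two such quotient bundles lifts to an equivariant bundle isomorphism over $\check{S}$ intertwining the two lifts, and a bundle automorphism over $\operatorname{id}_{\check S}$ automatically preserves the Seifert invariants. Near the fixed fibers one must also check that the local isotropy data $\widetilde{\alpha}_j\in\Z_{2d_j}$ of $Y/\tau_\lambda$ differ by $d_j$ between $\lambda=1$ and $\lambda=-1$. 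This can be read off from the chart formula $[z,w]\mapsto[\zeta_{2a''_j}z,\lambda\zeta^{b''_j}_{2a''_j}w]$, since the generator of the local $\Z_{2a''_j}$ acts on the fiber with weight shifted by $a''_j$ when $\lambda=-1$.
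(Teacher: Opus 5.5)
Your proposal is correct and follows the same route as the paper, whose proof simply combines the two-class count of \cref{classify_over} with the fact that $\tau_1$ and $\tau_{-1}$ are non-conjugate. Your observation that $\tau_{-1}=(-1)\circ\tau_1$ on every piece, together with your local check that the isotropy datum of $Y/\tau_\lambda$ at each fixed fiber shifts by $a''_j$ when $\lambda=-1$, supplies the details that the paper leaves implicit.
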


\begin{proof}
This follows from the classification result \cref{classify_over} combined with the fact that $\tau_1$ and $\tau_{-1}$ give different involutions.
\end{proof}

Now, we state our main result on the extension of involutions.

\begin{prop}\label{Prop:extension}
Let $(Y,\mathfrak{s},\tau)$ be an odd spin Seifert $3$--manifold with
sphere base, and suppose that $\tau$ is a type A involution commuting
with the Seifert $S^1$--action. Fix an odd lift $
\widehat{\tau}:P_{\mathfrak{s}}\longrightarrow P_{\mathfrak{s}}$
of $\tau$.

Then the gluing data in the construction of $X_0$, including the
meridional slopes at the unused ends of the half tunnels, can be chosen
so that there exist $
\widetilde{\mathfrak{s}}\in\operatorname{Spin}(X_0)$,
 $\widetilde{\tau}:X_0\longrightarrow X_0$, 
and an odd lift 
$\widehat{{\tau}_0}:
P_{\widetilde{\mathfrak{s}}}
\longrightarrow
P_{\widetilde{\mathfrak{s}}}$ 
such that $
\partial_-
(X_0,\widetilde{\mathfrak{s}},
\widetilde{\tau},
\widehat{{\tau}_0})
=
(Y,\mathfrak{s},\tau,\widehat{\tau}).$
\end{prop}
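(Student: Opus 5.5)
First I will construct $\widetilde{\tau}$ on $X_0$ piece by piece, so that it restricts to $\tau_\lambda$ on $\partial_-X_0=Y$. By \cref{prop:classification-tau-lambda}, after a fiber-preserving conjugation I may assume $\tau=\tau_\lambda$ for some $\lambda\in\{\pm1\}$.

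In the type A setting, take $H_0=D^3$ and realize the rotation $\overline{\tau}$ of $S^2=\partial D^3$ as the restriction of the $180$-degree rotation $R$ of $D^3$ about the $z$-axis. Then choose the tunnels equivariantly. Each full tunnel $U_i\times I$ is an arc from the point of order $a_i$ to the paired point of order $a'_i=a_i$, chosen so that $R$ maps the tunnel to itself, reversing the $I$-direction, i.e. $t\mapsto 1-t$. The two half tunnels are short arcs starting at the two fixed points of $\overline{\tau}$ and lying along the rotation axis; the cross-sectional disk $U''_j$ is rotated by $u''\mapsto -u''$.

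On $W_0\times S^1$ define $\widetilde{\tau}=R\times(v\mapsto\lambda v)$. On the solid pieces take the maps dictated by the definition of $\tau_\lambda$:
\begin{itemize}
\item $-D_i\times I_1\times S^1\leftrightarrow D'_i\times I_2\times S^1$ via $(\xi,t,\eta)\mapsto(\lambda^{-\nu_i}\xi,1-t,\lambda^{a_i}\eta)$;
\item on each half-tunnel piece, $\tau''_{\lambda j}\times\id_{I_2}$.
\end{itemize}
For compatibility with the gluing maps $\phi_i,\phi'_i$ I will choose the gluing data symmetrically, $\nu'_i=\nu_i$, $\rho'_i=\rho_i$, $b'_i=b_i$. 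This matching of paired Seifert invariants is forced by the existence of $\tau$ via the Furuta--Steer classification used in \cref{classify_over}. The orientation sign in $-D_i$ together with the reversal $t\mapsto1-t$ makes the map orientation preserving. Checking that $\widetilde{\tau}$ commutes with the gluings is then a direct computation on the boundary tori, exactly as in the verification that $\tau_\lambda$ is well defined. At the unused ends of the half tunnels, the rotation by $\zeta_{2a''_j}$ must descend to the capping lens space $-L(\widetilde a''_j,\widetilde b''_j)$. This is where the freedom in the meridional slope at the missing end is used: choose it so that the induced map on that torus is again a product rotation. By construction $\widetilde{\tau}$ commutes with the $S^1$-action and $\widetilde{\tau}^2=\id$.

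Next comes the spin structure. Using \cref{prop:spin structures on X_0} and \cref{prop:spin structures on type-A Y}, $\mathfrak{s}$ extends to $X_0$ if and only if $\lambda_i=\lambda'_i$ for $2\le i\le n-1$ and $\lambda''_1=\lambda''_n=1$. The first condition holds because $\tau^*\mathfrak{s}\cong\mathfrak{s}$ (which $\tau$ having a lift implies) and $\tau$ swaps the paired boundary tori, preserving the $\alpha$- and $\delta$-directions up to orientation. For the second condition, the restriction of $\mathfrak{s}$ to $\alpha''_j$ must be bounding. I expect to derive this from the oddness of $\widehat{\tau}$. Near the fiber over a fixed point, $\tau$ acts freely along $\alpha''_j$ by the half rotation $u''\mapsto -u''$. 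An odd lift (one of order $4$) squares to $-1$ on spinors, and for a half rotation on a circle this happens exactly when the circle carries the bounding spin structure. If instead this parity depends on $\lambda$, I will adjust by exploiting the two choices of $(\lambda_i,\mu)$ allowed by the gluing data. Choosing the $e_i,e'_i$ compatibly then gives $\widetilde{\mathfrak{s}}$, and by symmetry it is $\widetilde{\tau}$-invariant.

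Finally I lift $\widetilde{\tau}$ to $P_{\widetilde{\mathfrak{s}}}$. Invariance of $\widetilde{\mathfrak{s}}$ gives two lifts $\pm\widehat{\tau}_0$, and either $\widehat{\tau}_0^2=\pm1$ holds globally, since $X_0$ is connected. The lift restricting to $\widehat{\tau}$ on $Y$ is chosen by sign, and it is odd because its restriction is odd. What remains is to check that the restriction map from lifts over $X_0$ to lifts over $Y$ is a bijection on sign choices, which holds because $Y$ is connected.

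I expect the main obstacle to be the half-tunnel and fixed-fiber analysis: both the equivariant choice of the end slopes and the verification that oddness of $\widehat{\tau}$ forces $\lambda''_j=1$. If that does not work directly, the fallback is to compute the square of the lift locally in the model $[z,w]\mapsto[\zeta_{2a''_j}z,\lambda\zeta^{b''_j}_{2a''_j}w]$ on $\C^2/\Z_{a''_j}$.
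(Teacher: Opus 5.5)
Your construction of $\widetilde\tau$ on $W_0\times S^1$ and across the full tunnels, using symmetric data $\nu'_i=\nu_i$, $\rho'_i=\rho_i$ and the map $(\xi,t,\eta)\mapsto(\lambda^{-\nu_i}\xi,1-t,\lambda^{a_i}\eta)$, is fine. So are the argument for $\lambda_i=\lambda'_i$ and the final sign choice of the lift; these all agree with the paper.

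The gap is at the step you flagged: oddness of $\widehat\tau$ does \emph{not} in general force $\lambda''_j=1$. On the fixed torus $T''_j$ the involution is $(u,v)\mapsto(-u,\lambda v)$. This is the half-translation along $\alpha''_j+\varepsilon_\lambda\delta$, with $\varepsilon_\lambda=(1-\lambda)/2$. So the square of a lift is the spin holonomy along that class:
\[
\widehat\tau^{\,2}=(-1)^{\lambda''_j+\varepsilon_\lambda\mu}.
\]
Your circle argument therefore only covers $\lambda=1$, or $\mu=0$. Now take $\lambda=-1$ and $\mu=1$, so the fiber is rotated by $\pi$ and $\mathfrak{s}$ is bounding on the regular fiber. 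Then oddness gives $\lambda''_1=\lambda''_n=0$. Since $\alpha''_j$ bounds a disk in $W_0$, \cref{prop:spin structures on type-A Y} shows that $\mathfrak{s}$ does not extend over the $X_0$ built from the given Seifert presentation.

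Your fallback, ``exploiting the two choices of $(\lambda_i,\mu)$ allowed by the gluing data'', cannot repair this, for three reasons:
\begin{itemize}
\item $\mu$ is fixed by $\mathfrak{s}$.
\item The full-tunnel parameters $\lambda_i$ play no role at the fixed fibers.
\item The gluing data $(\nu''_j,\rho''_j)$ enter only the second column of the matching condition, so they change $e''_j$ but never $\lambda''_j$. The first column reads $\lambda''_ja''_j-\mu b''_j\equiv1 \pmod 2$ and involves only the Seifert invariants.
\end{itemize}

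What is needed is to change the Seifert presentation itself, i.e.\ the section over $S_0$, at the two fixed fibers. Set $\kappa=\varepsilon_\lambda\mu$, $t_1=\kappa$, $t_n=-\kappa$. Replace $b''_j$ by $b''_j+t_ja''_j$; this is allowed because $t_1+t_n=0$. Replace $\rho''_j$ by $\rho''_j-t_j\nu''_j$, so that $a''_j\rho''_j+b''_j\nu''_j=1$ still holds. This replaces $\alpha''_j$ by $\alpha''_j+t_j\delta$, and hence $\lambda''_j$ by $\lambda''_j+t_j\mu\equiv\lambda''_j+\varepsilon_\lambda\mu\equiv1$.

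Geometrically, this twists the product trivialization by a map $S_0\to S^1$ of degree $\pm1$ around the two fixed disks, for instance the azimuthal angle. Such a map anticommutes with $\overline\tau$, so in the new presentation $\tau_{-1}$ takes the form $\tau_{+1}$. You must therefore re-identify the involution as some $\tau_{\bar\lambda}$ before extending it equivariantly over the half tunnels. Without this re-choice of presentation, your proof fails in the case $\lambda=-1$, $\mu=1$.
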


\begin{proof} By \cref{prop:classification-tau-lambda}, after conjugating by a fiber-preserving diffeomorphism and transporting the spin structure and its fixed lift, we may assume that \(\tau=\tau_\lambda\) for some \(\lambda\in\{\pm1\}\). Put \(\varepsilon_\lambda:=(1-\lambda)/2\in\{0,1\}\) and \(\kappa:=\varepsilon_\lambda\mu\in\{0,1\}\), where \(\mu\in\mathbb Z_2\) is the spin parameter in the regular-fiber direction. 

Choose a symmetric Seifert presentation \[Y=M\bigl(0;(a''_1,b''_1),(a_2,b_2),(a'_2,b'_2),\ldots,(a_{n-1},b_{n-1}),(a'_{n-1},b'_{n-1}),(a''_n,b''_n)\bigr),\] where \((a_i,b_i)=(a'_i,b'_i)\) for \(2\leq i\leq n-1\), and denote the boundary parameters of \(\mathfrak{s}\) by \((\lambda_i,\mu)\), \((\lambda'_i,\mu)\), and \((\lambda''_j,\mu)\), respectively. Since \(\widehat{\tau}\) is a lift of \(\tau\), the spin structure \(\mathfrak{s}\) is preserved by \(\tau\). On the two boundary tori corresponding to the \(i\)-th full tunnel, the involution sends \(\alpha_i\) to \(-\alpha'_i\) and preserves \(\delta\), and therefore \[\lambda_i=\lambda'_i\qquad(2\leq i\leq n-1).\tag{1}\] On a fixed boundary torus \(T''_j=\partial U''_j\times S^1\), write \(u=e^{i\theta}\) and \(v=e^{i\phi}\). 
Then \[\tau_\lambda(u,v)=(-u,\lambda v)=\bigl(e^{i(\theta+\pi)},e^{i(\phi+\varepsilon_\lambda\pi)}\bigr).\] The local lift calculation gives \(\widehat{\tau}^{\,2}=(-1)^{\lambda''_j+\varepsilon_\lambda\mu}\). Since \(\widehat{\tau}\) is odd, \[\lambda''_j+\varepsilon_\lambda\mu\equiv1\pmod2\qquad(j=1,n).\tag{2}\] 

Set \(t_1=\kappa\) and \(t_n=-\kappa\), and replace the two fixed Seifert invariants by 
$(a''_j,b''_j)\longmapsto(a''_j,\bar b''_j),\  \bar b''_j:=b''_j+t_ja''_j\ (j=1,n)$.
Since \(t_1+t_n=0\), this is a standard equivalence of Seifert invariants and hence gives a fiber-preservingly diffeomorphic presentation of the same Seifert manifold \(Y\). We henceforth use this equivalent presentation. Under the corresponding identification of the boundary tori, we have  $\bar\alpha''_j=\alpha''_j+t_j\delta,$ as is seen from the equality $a''_j\alpha''_j-b''_j\delta=a''_j\bar\alpha''_j-\bar b''_j\delta$. In the new boundary basis, the spin parameter is
$\bar\lambda''_j=\lambda''_j+t_j\mu$. 

Since \(t_j\equiv\kappa\pmod2\) for \(j=1,n\), (2) gives \[\bar\lambda''_j\equiv\lambda''_j+\kappa\mu=\lambda''_j+\varepsilon_\lambda\mu\equiv1\pmod2.\tag{3}\]

The underlying fiber-preserving involution is still of type A and
commutes with the Seifert \(S^1\)--action. Hence, after a further
fiber-preserving conjugacy and transporting the spin structure and its
lift, \cref{prop:classification-tau-lambda} allows us to write it as
\(\tau_{\bar\lambda}\) for some
\(\bar\lambda\in\{\pm1\}\). Renaming \(\bar\lambda\) as \(\lambda\),
we use the notation \(\tau_\lambda\) from now on.

We now choose the gluing data for \(X_0\). For each full tunnel, choose \(\nu_i,\rho_i\in\mathbb Z\) satisfying \(a_i\rho_i+b_i\nu_i=1\), and set \[\nu'_i=\nu_i,\qquad \rho'_i=\rho_i.\tag{4}\] For \(j=1,n\), first choose \(\nu''_j,\rho''_j\in\mathbb Z\) satisfying \(a''_j\rho''_j+b''_j\nu''_j=1\), and put \[\bar\nu''_j=\nu''_j,\qquad \bar\rho''_j=\rho''_j-t_j\nu''_j.\tag{5}\] Then \[a''_j\bar\rho''_j+\bar b''_j\bar\nu''_j=a''_j(\rho''_j-t_j\nu''_j)+(b''_j+t_ja''_j)\nu''_j=1,\] so (5) gives valid gluing data for the fixed half tunnels in the new
Seifert presentation. At the unused end of each half tunnel, we use
the standard missing-end gluing data
\[
(a,b,\nu,\rho)=(1,0,0,1).
\]
By \cref{prop:spin structures on type-A Y}, the image of \(\operatorname{Spin}(X_0)\to\operatorname{Spin}(Y)\) is characterized by \(\lambda_i=\lambda'_i\) at every full tunnel and \(\bar\lambda''_j=1\) at every fixed half tunnel. These conditions follow from (1) and (3), respectively.
Hence \(\mathfrak{s}\) extends to a spin structure \(\widetilde{\mathfrak{s}}\in\operatorname{Spin}(X_0)\). The symmetric choices in (4) allow \(\tau_{\bar\lambda}\) to extend across every full tunnel, while its standard local form on each fixed component allows it to extend across the corresponding half tunnel with the gluing data (5). We therefore obtain an involution \(\widetilde{\tau}:X_0\to X_0\) whose restriction to \(Y\) is \(\tau\).

For each full tunnel, the two gluing matrices appearing in
\cref{prop:spin structures on X_0} are identical modulo \(2\), because
the gluing data are chosen symmetrically. Hence
\(\lambda_i=\lambda'_i\) implies \(e_i=e'_i\), and the involution
identifies the spin structures on the two exchanged pieces. On each
fixed half tunnel, the involution is a product of rotations of the disk
and circle factors and therefore preserves each spin structure on the
solid piece. Consequently, the piecewise extension may be chosen so
that $
\widetilde{\tau}^{*}\widetilde{\mathfrak{s}}
\cong
\widetilde{\mathfrak{s}}$. Thus \(\widetilde{\tau}\) admits a lift to
\(P_{\widetilde{\mathfrak{s}}}\).

Since \(X_0\) is connected, its two lifts differ by the central element \(-1\), and we may therefore choose a lift \(\widehat{\tau}_0:P_{\widetilde{\mathfrak{s}}}\to P_{\widetilde{\mathfrak{s}}}\) whose restriction to \(P_{\mathfrak{s}}\) is the prescribed lift \(\widehat{\tau}\). The square \(\widehat{\tau}_0^{\,2}\) is a continuous function from the connected manifold \(X_0\) to \(\{\pm1\}\), and is therefore constant. Its restriction to \(Y\) is \(\widehat{\tau}^{\,2}=-1\), so \(\widehat{\tau}_0^{\,2}=-1\) on \(X_0\). Thus \(\widehat{\tau}_0\) is odd, and, after undoing the initial fiber-preserving conjugacy, we obtain $\partial_-\bigl(X_0,\widetilde{\mathfrak{s}},\widetilde{\tau},\widehat{\tau}_0\bigr)=\bigl(Y,\mathfrak{s},\tau,\widehat{\tau}\bigr).$
\end{proof}

\begin{prop}\label{prop:involution}
Let
$
\widetilde{\tau}_\lambda:X_0\to X_0
$
be the extension of the boundary involution
$
\tau_\lambda:Y\to Y
$
constructed in \cref{Prop:extension}. Then
$$
H_1(X_0;\Q)^{-\widetilde{\tau}_{\lambda *}}=0
\qquad\text{and}\qquad
H_2(X_0;\Q)^{-\widetilde{\tau}_{\lambda *}}=0.
$$
\end{prop}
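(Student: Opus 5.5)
We work in the setting of \cref{Prop:extension}: $g=0$, type A, the two fixed points of $\overline{\tau}$ are half tunnels and the $n-2$ exchanged pairs are full tunnels. By \cref{general_homology} the homology computations of \cref{homologyofx0} still apply, with the half tunnels handled by the formal convention $a_j=1$, $b_j=0$. The plan is to compute the action of $\widetilde{\tau}_{\lambda*}$ directly on explicit generators and check that every generator is fixed.

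\textbf{Degree one.} Since $g=0$, \cref{homologyofx0} (via the remark after it) shows that $H_1(X_0;\Q)$ is torsion, hence zero, provided some $\Delta_i\neq 0$. If this nondegeneracy fails, the generator $\delta$ survives rationally. The regular fiber is carried by $\widetilde{\tau}_\lambda$ to the fiber over the image point, with the same orientation, because $\widetilde{\tau}_\lambda$ acts on the fiber by $v\mapsto \lambda v$ and so commutes with the $S^1$-action. Hence $\widetilde{\tau}_{\lambda*}\delta=\delta$ and the $(-1)$-eigenspace vanishes. The same argument through \cref{cor:H1-X0-Z2} works as well: $H_1(Y)\to H_1(X_0)$ is equivariant and surjective, and on the generators $\gamma_i,\delta$ the action is a permutation possibly composed with reparametrizations preserving orientation.

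\textbf{Degree two.} By \cref{homologyofx0} and \cref{general_homology}, $H_2(X_0;\Q)$ is the subspace of combinations $\sum s_iE_i$ satisfying the single linear relation $\sum\Delta_i s_i=0$. Here $E_i=\bar a'_i m_i+\bar a_i m'_i$ for a full tunnel, and $E_j$ is the meridian class $m''_j$ for a half tunnel. These classes are detected by the boundary map $\partial_2$ into $H_1(T)$, which is a monomorphism onto $\ker i_1$ because $\operatorname{coker} i_2=0$ when $g=0$. So it suffices to compute the action on the meridian classes in $H_1(T)$.

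\begin{itemize}
\item \emph{Full tunnels.} On the paired solid pieces $\widetilde{\tau}_\lambda$ restricts to $\tau_{\lambda i}$, $\tau'_{\lambda i}$, which are of the form $(\xi,\eta)\mapsto(\lambda^{-\nu_i}\xi,\lambda^{a_i}\eta)$. These exchange $D_i$ and $D'_i$ and send $m_i\mapsto m'_i$ and $m'_i\mapsto m_i$, since rotation preserves the orientation of $\partial D$.
\item \emph{Half tunnels.} Each piece is mapped to itself by a product of rotations, so $m''_j\mapsto m''_j$.
\end{itemize}

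Because $a_i=a'_i$ and $b_i=b'_i$ in type A, we have $\bar a_i=\bar a'_i$, and therefore $E_i\mapsto E_i$ for every $i$. Thus $\widetilde{\tau}_{\lambda*}$ is the identity on $H_2(X_0;\Q)$ and its $(-1)$-eigenspace is zero.

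\textbf{Main obstacle: orientations.} The delicate point is checking that the exchange $-D_i\times I_1\times S^1\leftrightarrow D'_i\times I_2\times S^1$ respects the orientation conventions. The first piece carries a minus sign, and the interval coordinate is reversed by $t\mapsto 1-t$, since $\widetilde{\tau}$ acts on the handlebody $H_0=D^3$ as a rotation exchanging the tunnel ends. One has to confirm that $m_i$ is sent to $+m'_i$ rather than $-m'_i$.

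I would check this via the gluing maps. On the product part, $(u,v)\mapsto(u^{-1},\lambda v)$ sends $\alpha_i\mapsto-\alpha_i$. Under $\phi_i$ we have $m_i\mapsto -a_i\alpha_i-b_i\delta$, and under $\phi'_i$ we have $m'_i\mapsto a'_i\alpha_i-b'_i\delta$. Applying $\alpha\mapsto-\alpha$, $\delta\mapsto\delta$ to the image of $m_i$ gives $a_i\alpha_i-b_i\delta$, which equals the image of $m'_i$ when $a_i=a'_i$ and $b_i=b'_i$. This confirms $m_i\mapsto m'_i$. It is consistent with the rational intersection form of \cref{prop:Q-intersection-form-on-X0} being preserved, and it yields the claim.
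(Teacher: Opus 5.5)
Your proposal is correct and follows essentially the paper's argument: in degree one everything reduces to the fixed fiber class $\delta$, and in degree two $\widetilde{\tau}_{\lambda*}$ swaps $m_i\leftrightarrow m'_i$ and fixes $m''_j$, so with $\bar a_i=\bar a'_i$ every class $\sum s_iE_i$ is fixed. Your sign check through $\phi_i,\phi'_i$, together with the injectivity of $\partial_2$ (since $\operatorname{coker} i_2=0$ for $g=0$), usefully justifies what the paper only asserts ``with the above orientation conventions''. One caution about your alternative degree-one remark: the fact that $\widetilde{\tau}_{\lambda*}$ permutes the $\gamma_i$ does not by itself exclude $(-1)$-eigenvectors such as $\gamma_i-\gamma'_i$. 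There you must also use either that these classes are rational multiples of $\delta$, or the hypothesis $H^1(Y;\mathbb{R})^{-\tau^*}=0$ together with the equivariant surjection $H_1(Y;\mathbb{Q})\to H_1(X_0;\mathbb{Q})$.
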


\begin{proof}
We first consider $H_1$. In the Mayer--Vietoris description of $H_1(X_0;\Q)$,
let $\delta$ denote the $S^1$--fiber direction, let $\gamma_i,\gamma_i'$ be the
boundary-circle generators corresponding to the two ends of the $i$-th full
tunnel, and let $\gamma_j''$ be the generator corresponding to the $j$-th half
tunnel. Since we suppose type A, the full-tunnel data satisfy
\[
(a_i,b_i)=(a'_i,b'_i)
\qquad
(2\leq i\leq n-1).
\]
The two half tunnels are indexed by \(j=1,n\), and their data in the
new Seifert presentation are
$
(a''_j,\bar b''_j)
\ 
(j=1,n)$.
The relations coming from the solid pieces give, over $\Q$,
$$
\gamma_i=-\frac{b_i}{a_i}\delta,
\qquad
\gamma_i'=\frac{b_i}{a_i}\delta,
\qquad
\gamma_j''\in \Q\langle \delta\rangle.
$$
Thus $H_1(X_0;\Q)$ is generated by the class of $\delta$.

The involution $\widetilde{\tau}_\lambda$ acts on the full tunnels by
interchanging the two ends and reversing the boundary-circle coordinate:
$$
\widetilde{\tau}_{\lambda *}(\gamma_i)=-\gamma_i',
\qquad
\widetilde{\tau}_{\lambda *}(\gamma_i')=-\gamma_i.
$$
It preserves the half-tunnel generators:
$
\widetilde{\tau}_{\lambda *}(\gamma_j'')=\gamma_j''.
$
Moreover, multiplication by $\lambda$ on the $S^1$--factor is isotopic to the
identity, so
$
\widetilde{\tau}_{\lambda *}(\delta)=\delta.
$
Using the relations above, we have
$$
-\gamma_i'=-\frac{b_i}{a_i}\delta=\gamma_i,
\qquad
-\gamma_i=\frac{b_i}{a_i}\delta=\gamma_i'.
$$
Hence every generator of $H_1(X_0;\Q)$ is fixed by
$\widetilde{\tau}_{\lambda *}$. Therefore
$
H_1(X_0;\Q)^{-\widetilde{\tau}_{\lambda *}}=0.
$

We next consider \(H_2\). By the homology computation of \(X_0\),
with the half-tunnel convention of \cref{general_homology}, we have
\[
H_2(X_0;\mathbb Z)
\cong
\left\{
\sum_{i=2}^{n-1}s_i(m_i+m'_i)
+
\sum_{j\in\{1,n\}}s''_jm''_j
\;\middle|\;
\sum_{i=2}^{n-1}2b_is_i
+
\sum_{j\in\{1,n\}}\bar b''_js''_j
=0
\right\}.
\]
Here \(m_i,m'_i,m''_j\) denote the meridian generators appearing in
the Mayer--Vietoris description of \(H_2(X_0;\mathbb Z)\).

On each full tunnel, $\widetilde{\tau}_\lambda$ interchanges the two pieces
$$
-D_i\times I_1\times S^1
\quad\text{and}\quad
D'_i\times I_2\times S^1.
$$
With the above orientation conventions, this gives
$
\widetilde{\tau}_{\lambda *}(m_i)=m'_i,
\widetilde{\tau}_{\lambda *}(m'_i)=m_i.
$
On each half tunnel, $\widetilde{\tau}_\lambda$ preserves the corresponding
piece. The map on the meridian is a rotation, and the multiplication by
$\lambda$ in the $S^1$--factor is isotopic to the identity. Hence
$
\widetilde{\tau}_{\lambda *}(m''_j)=m''_j.
$
Therefore
\[
\widetilde{\tau}_{\lambda *}
\left(
\sum_{i=2}^{n-1}s_i(m_i+m'_i)
+
\sum_{j\in\{1,n\}}s''_jm''_j
\right)
=
\sum_{i=2}^{n-1}s_i(m_i+m'_i)
+
\sum_{j\in\{1,n\}}s''_jm''_j.
\]
Thus every class in $H_2(X_0;\Z)$ is fixed by
$\widetilde{\tau}_{\lambda *}$. 
This proves the proposition.
\end{proof}

\begin{lem}\label{lem:extension over orbifold caps}
The involution $
\widetilde{\tau}:X_0\longrightarrow X_0$
and its odd lift
$
\widehat{\tau}_0:
P_{\widetilde{\mathfrak{s}}}
\longrightarrow
P_{\widetilde{\mathfrak{s}}}$ 
constructed in \cref{Prop:extension} extend over the orbifold caps and
hence define an odd spin involution on \(\check X\).
\end{lem}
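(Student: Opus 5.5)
We need to extend $\widetilde{\tau}$ and $\widehat{\tau}_0$ across each cap $C(\widetilde a,\widetilde b)=D^4/\Z_{\widetilde a}$ attached along a component of $\partial_+X_0$. The plan is to work one boundary lens space at a time. First we determine how $\widetilde{\tau}$ acts on that lens space. Then we check that this action is induced by a linear unitary map of $\C^2$ that normalizes the cyclic group defining the cap. Finally we lift that linear map to the orbifold spin structure on the cap.

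\emph{Step 1: the action on $\partial_+X_0$.} In type A the full tunnels are exchanged in pairs by $\widetilde{\tau}$, and only the two half tunnels $j=1,n$ are preserved. For a full tunnel, the lens space ends $-L(\widetilde a_i,\widetilde b_i)$ are interchanged. There we simply define the extension as the identification $C\to C$ induced by the same symmetric gluing data (4), transported between the two caps. This is automatically compatible with the lift, since the spin structure on $D^4/\Z_{\widetilde a}$ restricted to the boundary is unique when $\widetilde a$ is odd; in general we transport it by the map itself. We then define the lift on the paired caps so that the square is $-1$. This is possible because over a pair of exchanged components any lift $\widehat{\tau}$ with $\widehat\tau^2=-1$ on one copy determines the other.

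\emph{Step 2: the fixed half tunnels.} The unused end uses the missing-end data $(1,0,0,1)$. The boundary lens space $-L(\widetilde a''_j,\widetilde b''_j)$ is the union of two solid tori: the Seifert solid torus $D''_j\times S^1$ with its chart $[z,w]\in(\widetilde D''_j\times S^1)/\Z_{a''_j}$, and the end $\partial U''_j\times D^2$. On the first, $\widetilde\tau$ acts by $[z,w]\mapsto[\zeta_{2a''_j}z,\lambda\zeta^{b''_j}_{2a''_j}w]$, and on the second by $(u,v)\mapsto(-u,\lambda v)$. Both are restrictions of the torus action on the standard toric model. We will identify the lens space with $S^3/\Z_{\widetilde a}$, where $\Z_{\widetilde a}$ acts by $(x,y)\mapsto(\zeta x,\zeta^{\widetilde b}y)$. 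Under this identification, $\widetilde\tau$ becomes the diagonal unitary map $(x,y)\mapsto(\theta_1x,\theta_2y)$ for suitable roots of unity $\theta_k$. We read off the $\theta_k$ from the gluing matrix $\begin{pmatrix}a''_j&\bar\nu''_j\\-\bar b''_j&\bar\rho''_j\end{pmatrix}^{-1}\begin{pmatrix}-1&0\\0&1\end{pmatrix}$. Because the map is diagonal, it commutes with the cyclic action and extends radially to $D^4/\Z_{\widetilde a}$ as an orbifold diffeomorphism. Its square lies in the group generated by $\Z_{\widetilde a}$, which is the identity on the quotient, so the extension is an involution.

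\emph{Step 3: the lift, which is the main obstacle.} The spin structure on $C$ comes from $\C^2$, with local group lifted to $SU(2)\times SU(2)$. We must choose a preimage $\widehat\Theta$ of $\mathrm{diag}(\theta_1,\theta_2)$ in $\mathrm{Spin}(4)$, modulo the lifted $\Z_{\widetilde a}$, that matches $\widehat\tau_0$ on the boundary. Matching is possible because the two lifts on the connected boundary $L$ differ only by $-1$, and we can choose either preimage. The delicate point is oddness: $\widehat\Theta^2$ must equal $-1$ in the orbifold chart, modulo the lifted local group. Since $\widehat\Theta^2$ is continuous on the connected cap, it is constant and agrees with its value on $L$, which is $-1$ by \cref{Prop:extension}.

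The remaining check is that the chosen $\widehat\Theta$ genuinely normalizes the lifted local group. This is automatic because the maps are diagonal tori elements commuting with the action. We expect Step 3, and specifically the bookkeeping of the $\theta_k$ relative to the spin lift of $\Z_{\widetilde a}$ when $\widetilde a$ is even, to need the most care. There one must ensure that $\bar\lambda''_j=1$ from (3) makes the boundary spin structure the one induced from the cap.
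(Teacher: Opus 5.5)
\textbf{Verdict.} The half-tunnel part of your proposal is fine, but Step 1 misreads how $\widetilde\tau$ acts on the full tunnels, and as a result the full-tunnel caps are not handled.

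\textbf{What matches the paper.} For the two fixed half-tunnel caps your argument is essentially the paper's. You use a diagonal unitary map commuting with the cyclic action and extend it radially. Its spin lift lies in the maximal torus, it can be matched with the given lift on the connected boundary, and oddness follows because the square is constant on the connected cap.

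\textbf{The gap in Step 1.} In the type A construction, the $i$-th full tunnel joins the Seifert piece of order $a_i$ to the piece of order $a'_i$. These are exactly the two orbifold points that $\overline{\tau}$ interchanges. So $\widetilde\tau$ maps each full tunnel to itself and swaps $-D_i\times I_1\times S^1$ with $D'_i\times I_2\times S^1$. This is what the symmetric choice (4) is for, and it is how \cref{prop:involution} uses it. Each full tunnel contributes a single lens space $-L(\widetilde a_i,\widetilde b_i)$. The involution $\widetilde\tau$ preserves this lens space while interchanging its two Heegaard solid tori. There are no pairs of exchanged caps, so nothing can be obtained by transport. Your Step 3 argument, that diagonal torus elements commute with the local group, does not apply to these caps.

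\textbf{What the full-tunnel case actually requires.} You need an involution of $C=(\widetilde D\times\widetilde D')/\Z_{\widetilde a}$ that swaps the two factors, for example $\tau_C(z,w)=(s_\lambda w,s_\lambda z)$ with $s_\lambda=\lambda^{a/\widetilde a}$.
\begin{itemize}
\item This map does not commute with $\gamma(z,w)=(\zeta_{\widetilde a}z,\zeta_{\widetilde a}^{\widetilde b}w)$. It descends to $C$ only because $\tau_C\gamma\tau_C^{-1}=\gamma^{\widetilde b}$, and that needs $\widetilde b^{2}\equiv1\pmod{\widetilde a}$.
\item The congruence holds because the symmetric data give $\widetilde a=-2ab$ and $\widetilde b=a\rho-b\nu$, so $\widetilde b^{2}-1=2\rho\nu\widetilde a$.
\item You must also check that $\tau_C^2=\gamma^{\varepsilon_\lambda a}$, and that $\tau_C$ intertwines the two boundary charts with $\tau_{\lambda0}$.
\item At the spin level, a lift $\widehat\varepsilon(-k,s_\lambda^{-1}i)$ of the swap inverts the first $\Sp(1)$-factor. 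This gives $\widetilde\tau_{C*}\widehat\gamma_C\widetilde\tau_{C*}^{-1}=\varepsilon_C^{1-\widetilde b}\,\widehat\gamma_C^{\,\widetilde b}$.
\item Since $\widetilde a$ is even here, $\varepsilon_C=-1$ genuinely occurs. The lift therefore descends only because $\widetilde b=1-2b\nu$ is odd.
\end{itemize}

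\textbf{Two smaller points.} Your remark about the spin structure being unique when $\widetilde a$ is odd does not bear on these caps, since $\widetilde a$ is even. For the half-tunnel caps, both spin structures on a lens space with even $\widetilde a$ extend over the cap by choosing $\varepsilon_C$. So $\bar\lambda''_j=1$ is the condition for extending over $X_0$, not over the cap.
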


\begin{proof}
We prove the extension over each orbifold cap attached to $X_0$.
There are two types of caps: those coming from full tunnels and those
coming from half tunnels.

Put $ 
\varepsilon_\lambda:=\frac{1-\lambda}{2}\in\{0,1\}$.
Throughout the proof, for $r\in\mathbb Q$, we use the convention $
\lambda^r:=\exp(\pi i\varepsilon_\lambda r),
\zeta_m^r:=\exp\left(\frac{2\pi i r}{m}\right)$.

We first recall a standard fact about spin structures on cyclic
quotient caps. Consider
\[
C=
\frac{\widetilde D\times\widetilde D'}
{\mathbb Z_{\widetilde a}},
\qquad
\gamma(z,w)
=
(\zeta_{\widetilde a}z,
 \zeta_{\widetilde a}^{\widetilde b}w).
\]
Put $p=|\widetilde a|$. A lift of $\gamma$ to
$\Spin(4)=\Sp(1)\times\Sp(1)$ is, up to a central sign,
\[
\widehat\gamma_C
=
\varepsilon_C
\left(
\zeta_{\widetilde a}^{(1-\widetilde b)/2},
\zeta_{\widetilde a}^{-(1+\widetilde b)/2}
\right),
\qquad
\varepsilon_C\in\{\pm1\}.
\]
Its $p$-th power is $
\widehat\gamma_C^{\,p}
=
\varepsilon_C^p(-1)^{1+\widetilde b}$.
Since $\gcd(\widetilde a,\widetilde b)=1$, if $p$ is even, then
$\widetilde b$ is odd. It follows that one can choose
$\varepsilon_C$ so that $\widehat\gamma_C^{\,p}=1$. If $p$ is odd,
the boundary lens space has a unique spin structure, while if $p$ is
even, the two choices of $\varepsilon_C$ induce its two spin
structures. Thus the spin structure already fixed on the boundary
extends to an orbifold spin structure on the cap. In what follows,
$\varepsilon_C$ is always chosen so that this orbifold spin structure
restricts to the prescribed boundary spin structure.

\medskip
\noindent
{\bf Full-tunnel case.}
For this full tunnel, write $(a,b,\nu,\rho)$ for the gluing data at
one end. By the symmetric choice in \cref{Prop:extension}, the gluing
data at the other end are also $(a,b,\nu,\rho)$, where $
a\rho+b\nu=1$.
Consider a cap of the form $
C=
\frac{\widetilde D\times\widetilde D'}
{\mathbb Z_{\widetilde a}}$,
where the generator $\gamma$ of the cyclic action is
$
\gamma(z,w)
=
(\zeta_{\widetilde a}z,
 \zeta_{\widetilde a}^{\widetilde b}w)$.
The boundary charts are
\[
\varphi:
\frac{\partial\widetilde D\times\widetilde D'}
{\mathbb Z_{\widetilde a}}
\longrightarrow S^1\times D',
\qquad
\varphi[z,w]
=
(z^{\widetilde a},z^{-\widetilde b}w),
\]
and
\[
\psi:
\frac{\widetilde D\times\partial\widetilde D'}
{\mathbb Z_{\widetilde a}}
\longrightarrow D\times S^1,
\qquad
\psi[z,w]
=
(w^{-\widetilde\nu}z,w^{\widetilde a}).
\]
Let $
\iota(\xi,\eta)=(\eta,\xi)$
and put $
\varphi'=\iota\circ\varphi$.

On the covering space $\widetilde D\times\widetilde D'$, define
\[
\tau_C(z,w)
=
(s_\lambda w,s_\lambda z),
\qquad
s_\lambda:=\lambda^{a/\widetilde a}.
\]
The symmetric gluing data give
$
\widetilde b=a\rho-b\nu,
\ 
\widetilde\nu=\widetilde b,
\ 
\widetilde a=-2ab$.
We first have
\[
\begin{aligned}
\widetilde b^2-1
&=
(a\rho-b\nu)^2-(a\rho+b\nu)^2=
-4ab\rho\nu
=
2\rho\nu\,\widetilde a.
\end{aligned}
\]
Hence $
\widetilde b^2\equiv1\pmod{\widetilde a}$,
and therefore $\tau_C\circ\gamma
=
\gamma^{\widetilde b}\circ\tau_C$.
Thus $\tau_C$ descends to the quotient.

We next check that the induced map is an involution. Since $
s_\lambda^2
=
\zeta_{\widetilde a}^{\varepsilon_\lambda a}$
and $
a(\widetilde b-1)
=
-2ab\nu
=
\widetilde a\nu$,
we have $
\zeta_{\widetilde a}^{
\varepsilon_\lambda a\widetilde b}
=
\zeta_{\widetilde a}^{\varepsilon_\lambda a}$.
Consequently,
\[
\begin{aligned}
\tau_C^2(z,w)
&=
\left(
\zeta_{\widetilde a}^{\varepsilon_\lambda a}z,
\zeta_{\widetilde a}^{\varepsilon_\lambda a}w
\right)=
\gamma^{\varepsilon_\lambda a}(z,w).
\end{aligned}
\]
Thus the induced map on $C$ has square equal to the identity.

Define $ 
\tau_{\lambda0}(\xi,\eta)
=
(\lambda^{-\nu}\xi,\lambda^a\eta)$.
Since $
a(1-\widetilde b)
=
-\widetilde a\nu$,
we have $
s_\lambda^{1-\widetilde b}
=
\lambda^{-\nu}, 
s_\lambda^{\widetilde a}
=
\lambda^a$.
Therefore
\[
\begin{aligned}
\varphi'\circ\tau_C[z,w]
&=
\left(
s_\lambda^{1-\widetilde b}
w^{-\widetilde b}z,
s_\lambda^{\widetilde a}w^{\widetilde a}
\right)=
\left(
\lambda^{-\nu}w^{-\widetilde b}z,
\lambda^aw^{\widetilde a}
\right)=
\tau_{\lambda0}\circ\psi[z,w].
\end{aligned}
\]
Hence the underlying involution extends over the full-tunnel cap.

We next lift this map to the spin bundle. The coordinate-switching
map $\iota$ has a spin lift
\[
\widetilde\iota_*
=
\widehat\varepsilon_0(-k,i)
\in\Spin(4),
\qquad
\widehat\varepsilon_0\in\{\pm1\}.
\]
Since $\tau_C$ is obtained from $\iota$ by the scalar rotation
$s_\lambda$, a spin lift of $\tau_C$ is
\[
\widetilde\tau_{C*}
=
\widehat\varepsilon
(-k,s_\lambda^{-1}i),
\qquad
\widehat\varepsilon\in\{\pm1\}.
\]

The lift of the generator of the cyclic action defining the chosen
orbifold spin structure is
\[
\widehat\gamma_C
=
\varepsilon_C
\left(
\zeta_{\widetilde a}^{(1-\widetilde b)/2},
\zeta_{\widetilde a}^{-(1+\widetilde b)/2}
\right).
\]
A direct computation, using
$\widetilde b^2\equiv1\pmod{\widetilde a}$, gives
$
\widetilde\tau_{C*}\circ\widehat\gamma_C
=
\widehat\gamma_C^{\,\widetilde b}
\circ\widetilde\tau_{C*}$
from $ 
\varepsilon_C=\varepsilon_C^{\widetilde b}$.
But
\[
\widetilde b
=
a\rho-b\nu
=
a\rho+b\nu-2b\nu
\equiv1\pmod2.
\]
Thus this condition is automatic, and
$\widetilde\tau_{C*}$ descends to a spin lift over $C$.

The restriction of this lift and the spin lift already fixed on
$X_0$ are lifts of the same boundary involution with respect to the
same spin structure. Since the boundary lens space is connected, they
differ by a constant element of $\{\pm1\}$. Choosing
$\widehat\varepsilon$ appropriately, the two lifts agree. Hence the
spin lift extends over the full-tunnel cap.

\medskip
\noindent
{\bf Half-tunnel case.}
Fix a half tunnel indexed by \(j\in\{1,n\}\). Write
$
(a',b',\nu',\rho')
=
(a''_j,\bar b''_j,\bar\nu''_j,\bar\rho''_j)$
for the gluing data at the end belonging to \(Y\), and use $
(a,b,\nu,\rho)=(1,0,0,1)$
at the missing end. Thus
$
a\rho+b\nu=1,
\ 
a'\rho'+b'\nu'=1$.

The boundary half-tunnel model is
$
\tau_\lambda(u,t,v)
=
(-u,t,\lambda v)$,
and on the two orbifold solid-torus charts it is written as
\[
[z,0,w]
\longmapsto
[\zeta_{2a}^{-1}z,0,
 \lambda\zeta_{2a}^{-b}w]
\]
and
\[
[z',1,w']
\longmapsto
[\zeta_{2a'}z',1,
 \lambda\zeta_{2a'}^{b'}w'].
\]

After identifying the boundary with the boundary of the orbifold cap $
C=
\frac{\widetilde D\times\widetilde D'}
{\mathbb Z_{\widetilde a}}$,
the generator of the cyclic action is $
\gamma(z,w)
=
(\zeta_{\widetilde a}z,
 \zeta_{\widetilde a}^{\widetilde b}w)$,
and the cap gluing is determined by
\[
\begin{pmatrix}
-\widetilde b & \widetilde\rho\\
\widetilde a & \widetilde\nu
\end{pmatrix}
=
\begin{pmatrix}
a' & \nu'\\
-b' & \rho'
\end{pmatrix}^{-1}
\begin{pmatrix}
-a & -\nu\\
-b & \rho
\end{pmatrix}.
\]

Put $ 
c_+
=
\zeta_{2\widetilde a}^{b'}
\lambda^{a'/\widetilde a},
\ 
c_-
=
\zeta_{2\widetilde a}^{-b}
\lambda^{a/\widetilde a}.$
On the covering space, define
\[
\tau_C(z,w)
=
(c_+z,c_-w).
\]
Since $\tau_C$ is diagonal, it commutes with $\gamma$ and hence
descends to the quotient.

An elemental computation ensures $
\widetilde b
\bigl(b'+\varepsilon_\lambda a'\bigr)
\equiv
-b+\varepsilon_\lambda a
\pmod{\widetilde a}$.
Since
$
c_+^2
=
\zeta_{\widetilde a}^{
b'+\varepsilon_\lambda a'},
\ 
c_-^2
=
\zeta_{\widetilde a}^{
-b+\varepsilon_\lambda a}$,
we obtain $
\tau_C^2
=
\gamma^{b'+\varepsilon_\lambda a'}$.
Thus the induced map on $C$ is an involution.

We next check that it restricts to the prescribed boundary maps. Put
\[
\varphi[z,w]
=
(z^{\widetilde a},z^{-\widetilde b}w),
\ 
\varphi'[z,w]
=
(z^{-\widetilde b}w,z^{\widetilde a}),
 \text{ and }
\psi[z,w]
=
(w^{-\widetilde\nu}z,w^{\widetilde a}).
\]
In addition to the two identities above, the gluing matrix gives
\[
b\widetilde\nu+b'
=
-\rho\widetilde a,
\qquad
a\widetilde\nu-a'
=
\nu\widetilde a.
\]
It follows that
\[
c_+^{-\widetilde b}c_-
=
\lambda^{-\nu'}\zeta_2^{\rho'},
\ 
c_+^{\widetilde a}
=
\lambda^{a'}\zeta_2^{b'},
\text{ and }
c_-^{-\widetilde\nu}c_+
=
\lambda^{-\nu}\zeta_2^{-\rho},
\ 
c_-^{\widetilde a}
=
\lambda^a\zeta_2^{-b}.
\]
Therefore
\[
\varphi'\circ\tau_C
=
\tau_\lambda^+\circ\varphi',
\qquad
\psi\circ\tau_C
=
\tau_\lambda^-\circ\psi,
\]
where
\[
\tau_\lambda^+(\xi',\eta')
=
\left(
\lambda^{-\nu'}\zeta_2^{\rho'}\xi',
\lambda^{a'}\zeta_2^{b'}\eta'
\right) \text{ and }
\tau_\lambda^-(\xi,\eta)
=
\left(
\lambda^{-\nu}\zeta_2^{-\rho}\xi,
\lambda^a\zeta_2^{-b}\eta
\right).
\]
Hence the underlying involution extends over the half-tunnel cap.

The derivative of $\tau_C$ is diagonal:
\[
d\tau_C
=
\begin{pmatrix}
c_+ & 0\\
0 & c_-
\end{pmatrix}.
\]
A spin lift is therefore
\[
\widetilde\tau_{C*}
=
\widehat\varepsilon
\left(
\zeta_{2\widetilde a}^{(b'+b)/2}
\lambda^{(a'-a)/(2\widetilde a)},
\zeta_{2\widetilde a}^{(-b'+b)/2}
\lambda^{(-a'-a)/(2\widetilde a)}
\right),
\]
where $\widehat\varepsilon\in\{\pm1\}$.

Let
\[
\widehat\gamma_C
=
\varepsilon_C
\left(
\zeta_{\widetilde a}^{(1-\widetilde b)/2},
\zeta_{\widetilde a}^{-(1+\widetilde b)/2}
\right)
\]
be the lift of the cyclic action defining the orbifold spin structure
whose restriction is the prescribed boundary spin structure. Both
$\widetilde\tau_{C*}$ and $\widehat\gamma_C$ lie in the same maximal
torus of $\Spin(4)$. Hence
$
\widetilde\tau_{C*}\circ\widehat\gamma_C
=
\widehat\gamma_C\circ\widetilde\tau_{C*}$,
and $\widetilde\tau_{C*}$ descends to a spin lift over $C$.

The restriction of this lift and the spin lift already fixed on
$X_0$ are lifts of the same boundary involution with respect to the
same spin structure. Since the boundary lens space is connected, they
differ by a constant element of $\{\pm1\}$. Choosing
$\widehat\varepsilon$ appropriately, the two lifts agree. Hence the
spin lift extends over the half-tunnel cap.

\medskip
The above arguments apply to every cap attached to $X_0$. On each
connected cap, the square of the extended spin lift covers the
identity and is therefore multiplication by a locally constant
element of $\{\pm1\}$. Its restriction to the boundary is the square
of the prescribed odd lift on $X_0$, and hence is $-1$. Therefore the
extended spin lift is odd on every cap.

Thus the spin lift on $X_0$ extends to an odd spin lift on the whole
orbifold $\check X$. This proves the lemma.
\end{proof}

\section{Real Seiberg--Witten theory}
\subsection{Real Floer homotopy type}
Let $(Y,\mathfrak{s})$ be an oriented spin $3$--manifold equipped with an
orientation-preserving smooth involution
$
\tau:Y\to Y.
$
Assume that $\tau$ preserves the isomorphism class of $\mathfrak{s}$, and fix a
lift
$$
\widetilde{\tau}:P\to P
$$
of $d\tau:SO(TY)\to SO(TY)$ to the principal $\Spin(3)$--bundle $P$ associated
with $\mathfrak{s}$. Since any two such lifts differ by the central element
$-1\in\Spin(3)$, the order of the lift is independent of this choice. We say that
the lift is even if
$
\widetilde{\tau}^2=\id,
$
and odd if
$
\widetilde{\tau}^2=-\id.
$
In the latter case, we call $(Y,\mathfrak{s},\tau,\widetilde{\tau})$ an odd spin
involutive $3$--manifold.

Throughout this subsection, we assume that $\widetilde{\tau}$ is odd and that
$$
H^1(Y;\R)^{\tau^*}=H^1(Y;\R).
$$

Choose a $\tau$--invariant Riemannian metric $g$ on $Y$, and let $\mathbb S$ be
the spinor bundle associated with $\mathfrak{s}$. We use the spin connection as
the base connection and consider the Coulomb slice
$$
V(Y):=L^2_k(i\ker d^*)\oplus L^2_k(\mathbb S).
$$
The Chern--Simons--Dirac functional
$$
CSD:V(Y)\to\R
$$
is invariant under the usual $\Pin(2)$--action. The lift $\widetilde{\tau}$
acts on spinors, and $\tau$ acts on $1$--forms by pull-back. Define an operator
$$
I:V(Y)\to V(Y)
$$
by
$$
I(a,\psi):=(-\tau^*a,\;j\,\widetilde{\tau}(\psi)),
$$
where $j\in\Pin(2)=S^1\cup jS^1$ is the quaternionic element.

Since $\widetilde{\tau}$ is odd, we have $\widetilde{\tau}^2=-1$ on spinors.
Also $j^2=-1$, and $j$ commutes with $\widetilde{\tau}$. Hence
$
I^2=\id.
$
Thus $I$ is an involution on $V(Y)$. Moreover, $I$ preserves $CSD$, and therefore
the formal gradient vector field of $CSD$ restricts to the fixed-point subspace
$
V(Y)^I.
$ Write the restricted vector field as
$
l+c,
$
where $l$ is the restriction of the self-adjoint elliptic linear part and $c$ is
the compact nonlinear part. We decompose $V(Y)^I$ into eigenspaces of $l$. For
sufficiently large $\lambda>0$, let
$$
(V^\lambda_{-\lambda}(Y))^I\oplus (W^\lambda_{-\lambda}(Y))^I
$$
be the direct sum of the eigenspaces of $l|_{V(Y)^I}$ with eigenvalues in
$(-\lambda,\lambda]$, where the first summand denotes the $1$--form part and
the second summand denotes the spinor part.

Restrict the cut-off vector field
$
l+p^\lambda_{-\lambda}c
$
to
$
(V^\lambda_{-\lambda}(Y))^I\oplus (W^\lambda_{-\lambda}(Y))^I.
$
By the compactness theorem for the  finite-dimensional approximation under
the condition $H^1(Y;\R)^{-\tau^*}=0$, this finite-dimensional flow admits a large
ball as an isolating neighborhood. Let
$
(N^I,L^I)
$
be the corresponding Conley index.
Recall that the action of $j$ commutes with $I$. Therefore $j$ preserves the
fixed-point space $V(Y)^I$, and also preserves the finite-dimensional
approximation and the Conley index $N^I/L^I$. We put
$$
G:=\langle j\rangle\cong \Z_4,
\qquad
H:=\langle -1\rangle\subset G.
$$
Thus $N^I/L^I$ is naturally a pointed finite $G$--CW complex.

On the spinor part, the operator $j$ satisfies $j^2=-1$. Hence the real vector
space $\Gamma(\mathbb S)^I$ carries a natural complex structure. We denote this
complex $G$--representation by $\C_+$; equivalently, a generator $j\in G$ acts
on $\C_+$ by multiplication by $i$.

Let
$
n(Y,\mathfrak{s},g)
$
denote the correction term appearing in Manolescu's construction, with the
same normalization as in the real finite-dimensional approximation. Namely, if
$X$ is a compact spin $4$--manifold with $\partial X=Y$, whose spin structure
restricts to $\mathfrak{s}$, and whose metric is product near the boundary and
restricts to $g$ on $Y$, then
$$
n(Y,\mathfrak{s},g)
:=
\operatorname{ind}^{APS}_{\C}(D_X^+)+\frac{\sigma(X)}{8}.
$$
Here, we emphasize that we do not require the involution to extend to $X$. 
This quantity is independent of the choice of such $X$. 
We define the real Seiberg--Witten Floer homotopy type of
$(Y,\mathfrak{s},\tau,\widetilde{\tau})$ by
$$
SWF_R(Y,\mathfrak{s},\tau)
:=
\left[
\left(
N^I/L^I,\;
\dim_{\R}(V^0_{-\lambda}(Y))^I,\;
\dim_{\C}(W^0_{-\lambda}(Y))^I+\frac{n(Y,\mathfrak{s},g)}{2}
\right)
\right]
\in\mathfrak C_G.
$$
Here the complex dimension is taken with respect to the complex structure on
$(W^0_{-\lambda}(Y))^I$ induced by the action of $j$.
We briefly recall the category $\mathfrak C_G$. Let
$$
G=\langle j\rangle\cong\Z_4,
\qquad
H=\langle -1\rangle\subset G.
$$
Let $\widetilde{\R}$ denote the real one-dimensional representation of $G$ on
which $j$ acts by multiplication by $-1$, and let $\C_+$ denote the complex
one-dimensional representation of $G$ on which $j$ acts by multiplication by $i$.

An object of $\mathfrak C_G$ is a triple
$$
(X,m,n),
$$
where $X$ is a pointed finite $G$--CW complex, $m\in\Z$, and $n\in\Q$, satisfying:
\begin{itemize}[leftmargin=0pt,itemindent=1.5em,labelsep=0.5em]
    \item $X^H$ is $G$--homotopy equivalent to $(\widetilde{\R}^s)^+$ for some
    $s\geq0$;
    \item $G$ acts freely on $X\setminus X^H$.
\end{itemize}
The morphisms are defined by stabilization. Namely, the set of morphisms from
$(X,m,n)$ to $(X',m',n')$ is
$$
\operatorname*{colim}_{\ell\to\infty}
\left[
(\widetilde{\R}^{m+\ell}\oplus\C_+^{\,n+\ell})^+\wedge X,\;
(\widetilde{\R}^{m'+\ell}\oplus\C_+^{\,n'+\ell})^+\wedge X'
\right]_G. 
$$
When the lift $\widetilde{\tau}$ is fixed or clear
from the context, we simply write
$
SWF_R(Y,\mathfrak{s},\tau)
$
or
$
SWF_R(Y).
$
We now define local equivalence. Two objects $(X,m,n)$ and $(X',m',n')$ of
$\mathfrak C_G$ are said to be locally equivalent if, for sufficiently large
$\ell$, there exist $G$--equivariant maps
$$
f:
(\widetilde{\R}^{m+\ell}\oplus\C_+^{\,n+\ell})^+\wedge X
\to
(\widetilde{\R}^{m'+\ell}\oplus\C_+^{\,n'+\ell})^+\wedge X',
$$
and
$$
f':
(\widetilde{\R}^{m'+\ell}\oplus\C_+^{\,n'+\ell})^+\wedge X'
\to
(\widetilde{\R}^{m+\ell}\oplus\C_+^{\,n+\ell})^+\wedge X,
$$
whose restrictions to the $H$--fixed point sets are $G$--homotopy equivalences.
We denote the local equivalence class of $(X,m,n)$ by
$
[(X,m,n)]_{\operatorname{loc}}.
$

Let $\mathcal{LE}_G$ denote the set of local equivalence classes. The smash
product gives $\mathcal{LE}_G$ the structure of an abelian group:
$$
[(X,m,n)]_{\operatorname{loc}}
+
[(X',m',n')]_{\operatorname{loc}}
:=
[(X\wedge X',m+m',n+n')]_{\operatorname{loc}}.
$$
The identity element is
$
[(S^0,0,0)]_{\operatorname{loc}},
$
and the inverse is given by the Spanier--Whitehead dual, together with the
grading change $(-m,-n)$.
The invariant we shall consider in this paper is
\[
[SWF_R(Y,\mathfrak{s},\tau)]_{\mathrm{loc}}
\in
\mathcal{LE}_G.
\]
The rational number $n$ is a formal $\C_+$--grading. In particular,
when $q\in\Q$, the notation $
\C_+^q$
denotes the formal grading shift by $q\C_+$ and does not mean an actual
$q$--fold direct sum unless $q$ is a non-negative integer. Morphisms,
suspensions, and local equivalences are understood in this formally
rationally graded stable category.

Once this local equivalence class is defined, we can apply the existing
equivariant Floer-theoretic formalism to extract numerical invariants from it.
In particular, the invariants $
\delta_R(Y,\mathfrak{s},\tau), 
\underline{\delta}_R(Y,\mathfrak{s},\tau),
\overline{\delta}_R(Y,\mathfrak{s},\tau)$,
and the corresponding K-theoretic invariants, which we denote by $
\kappa_R(Y,\mathfrak{s},\tau)$
and its variants, are defined from the local equivalence class $
[SWF_R(Y,\mathfrak{s},\tau)]_{\mathrm{loc}}$. See \cite{KMT21, KMT:2023}. 

We do not recall the definitions of these numerical invariants here. We only
use the fact that they depend only on the local equivalence class of
$SWF_R(Y,\mathfrak{s},\tau)$. Hence they are invariants of the odd spin
involutive $3$--manifold $(Y,\mathfrak{s},\tau,\widetilde{\tau})$.
When the lift $\widetilde{\tau}$ is fixed or clear from the context, we suppress
it from the notation.

\begin{rem} 
Let us briefly explain why the above object is independent of the choices of
the invariant metric, the cut-off parameter, and the finite-dimensional
approximation data. The independence of the cut-off parameter is proved by the usual continuation
argument for finite-dimensional approximation. If the interval of eigenvalues is
enlarged, the Conley index is suspended by the additional eigenspaces. This
change is exactly recorded by the two grading coordinates
$$
\dim_{\R}(V^0_{-\lambda}(Y))^I
\qquad\text{and}\qquad
\dim_{\C}(W^0_{-\lambda}(Y))^I.
$$

For the metric independence, let $g_t$, $0\leq t\leq1$, be a path of
$\tau$--invariant metrics. The linearized operators on the fixed-point space
$V(Y)^I$ form a path of self-adjoint Fredholm operators. The continuation
isomorphism changes the finite-dimensional approximation by the spectral flow
of this path.

The real $1$--form part contributes to the $\widetilde{\R}$--suspension. The
spinor part is the important point. Since $j$ preserves the $I$--fixed spinor
space and gives it a complex structure, the spinorial contribution to the real
finite-dimensional approximation is measured in complex $\C_+$--dimension. The
spectral flow on the $I$--fixed spinor part is one half of the usual complex
spectral flow of the Dirac operators:
$$
\operatorname{sf}_{\C_+}(D_{g_t}|_{\Gamma(\mathbb S)^I})
=
\frac{1}{2}\operatorname{sf}_{\C}(D_{g_t}).
$$
On the other hand, the APS index formula gives
$$
n(Y,\mathfrak{s},g_1)-n(Y,\mathfrak{s},g_0)
=
-\,\operatorname{sf}_{\C}(D_{g_t})
$$
with the present convention.  
Thus the variation of the spinor suspension is exactly cancelled by the
correction term $\frac{1}{2}n(Y,\mathfrak{s},g)\C_+$. This is the reason why the
real Floer homotopy type is normalized by $n(Y,\mathfrak{s},g)/2$ rather than by
$n(Y,\mathfrak{s},g)$. Again, the compact spin $4$--manifold used in the definition of
$n(Y,\mathfrak{s},g)$ is an ordinary spin bounding of $(Y,\mathfrak{s})$; no
extension of the involution $\tau$ over this bounding is needed.
\end{rem}

We now explain two standard sources of examples to which the above construction
applies.

\begin{defn}\label{def:kth-real-SWF-knot}
Let $K\subset S^3$ be a knot and let
$
\Sigma_{2^k}(K)\to S^3
$
be the $2^k$--fold cyclic branched cover of $S^3$ along $K$. Let $g_k$ be a
generator of the covering transformation group. We put
$
\tau_k:=g_k^{2^{k-1}}.
$
Then $\tau_k$ is an involution on $\Sigma_{2^k}(K)$. We denote by
$\mathfrak{s}_k$ the spin structure on $\Sigma_{2^k}(K)$ used in the real
branched-cover construction; with respect to this spin structure, $\tau_k$ is
odd. We define
$$
SWF_R^{(k)}(K)
:=
SWF_R(\Sigma_{2^k}(K),\mathfrak{s}_k,\tau_k).
$$
\end{defn}
At the level of local equivalence classes,
$$
[SWF_R^{(k)}(K)]_{\mathrm{loc}}
:=
[SWF_R(\Sigma_{2^k}(K),\mathfrak{s}_k,\tau_k)]_{\mathrm{loc}}
\in \mathcal{LE}_G.
$$
The corresponding numerical invariants are defined by
$
\delta_R^{(k)}(K)
:=
\delta_R(\Sigma_{2^k}(K),\mathfrak{s}_k,\tau_k),
$
and similarly
$
\underline{\delta}_R^{(k)}(K),
\overline{\delta}_R^{(k)}(K),
\kappa_R^{(k)}(K).
$ These are all concordance invariants of $ K$.

\begin{defn}\label{def:kth-real-SWF-homology-S1xS2}
Let $Y$ be a homology $S^1\times S^2$. For each $k\geq1$, let
$
\widetilde{Y}_k\to Y
$
be the connected $\Z_{2^k}$--cover corresponding to the reduction
$
H_1(Y;\Z)\cong\Z\to \Z_{2^k}.
$
Let $g_k$ be a generator of the deck transformation group and put
$
\tau_k:=g_k^{2^{k-1}}.
$
Then $\tau_k$ is an involution on $\widetilde{Y}_k$. We take the spin structure
$\mathfrak{s}_k$ on $\widetilde{Y}_k$ which does not descend to
$\widetilde{Y}_k/\langle\tau_k\rangle$. With respect to this spin structure,
$\tau_k$ is odd. We define
$$
SWF_R^{(k)}(Y)
:=
SWF_R(\widetilde{Y}_k,\mathfrak{s}_k,\tau_k),
$$
and
$$
[SWF_R^{(k)}(Y)]_{\mathrm{loc}}
:=
[SWF_R(\widetilde{Y}_k,\mathfrak{s}_k,\tau_k)]_{\mathrm{loc}}
\in \mathcal{LE}_G.
$$
\end{defn}
The corresponding numerical invariants are defined by
$
\delta_R^{(k)}(Y)
:=
\delta_R(\widetilde{Y}_k,\mathfrak{s}_k,\tau_k),
$
and similarly
$
\underline{\delta}_R^{(k)}(Y),
\overline{\delta}_R^{(k)}(Y),
\kappa_R^{(k)}(Y).
$
As proved in \cite{MPT25}, if
$
Y=S^3_0(K)
$
is the $0$--surgery on a knot $K$, then the invariants defined from the
branched covers of $K$ agree with the invariants defined from the cyclic covers
of $Y$:
$$
\delta_R^{(k)}(K)=\delta_R^{(k)}(Y), 
\underline{\delta}_R^{(k)}(K)=\underline{\delta}_R^{(k)}(Y), 
\overline{\delta}_R^{(k)}(K)=\overline{\delta}_R^{(k)}(Y), 
\kappa_R^{(k)}(K)=\kappa_R^{(k)}(Y).
$$

\begin{prop}\label{prop:eighth-integrality-knot-invariants}
For every knot \(K\subset S^3\) and every \(k\geq1\), one has
\[
\delta_R^{(k)}(K),
\quad
\underline{\delta}_R^{(k)}(K),
\quad
\overline{\delta}_R^{(k)}(K),
\quad
\kappa_R^{(k)}(K)
\in
\frac18\mathbb Z.
\]
\end{prop}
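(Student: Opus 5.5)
The plan is to trace where the fractional part of these invariants comes from and show that it is governed by the grading shift $n(Y,\mathfrak{s},g)/2$ alone. In \cite{KMT21, KMT:2023} the invariants $\delta_R,\underline{\delta}_R,\overline{\delta}_R,\kappa_R$ are defined from $[SWF_R]_{\mathrm{loc}}$ as a $\C_+$--grading level. For $\kappa_R$ it is a $K$--theoretic level; for the others, the degree in Borel $\Z_4$--cohomology (or the level of a map of spheres) read off in $\C_+$--units, plus the formal shift. In each case the unstable part contributes an integer or half-integer in the $\C_+$--unit normalization that gives the $\frac1{16}\Z$ statement, and the only rational contribution is $\frac{1}{2}n(Y,\mathfrak{s},g)$ (scaled by the fixed normalization constant). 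So the first step is to record that, modulo $\frac18\Z$, each invariant is congruent to a fixed multiple of $n(Y,\mathfrak{s},g)$ computed on the real branched cover. Equivalently, one needs $n(\Sigma_{2^k}(K),\mathfrak{s}_k,g)\in\frac{1}{4}\Z$ with the normalization in which $\frac{1}{16}\Z$ is the a priori range. The dimensions $\dim_{\R}(V^0_{-\lambda})^I$ and $\dim_{\C}(W^0_{-\lambda})^I$ are integers and only shift the answer by lattice amounts.

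Next, compute $n$ via its definition with an honest spin filling. Here $n(Y,\mathfrak{s},g)=\operatorname{ind}^{APS}_{\C}(D_X^+)+\sigma(X)/8$ for any spin $X$ bounding $(Y,\mathfrak{s})$, and the equivariance of $X$ is not needed. Modulo the integer index, $n\equiv\sigma(X)/8$ up to $\eta$--type terms. Rather than fight with $\eta$--invariants, I would use the fact that $\Sigma_{2^k}(K)$ is a $\Z_2$--homology sphere (its order $\prod\Delta_K(\zeta)$ is odd). For a rational homology sphere with its unique spin structure, $-\frac{n}{?}$ can be traded for the Frøyshov-type normalization: $n(Y,\mathfrak{s},g)$ is congruent, modulo $2\Z$ in the standard Manolescu normalization, to $-\mu$-type data, i.e.\ $\sigma(X)/8$ modulo integers. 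Taking $X$ to be the $2^k$--fold branched cover of $B^4$ along a pushed-in Seifert surface, which is spin because $H_1(Y;\Z_2)=0$ lets the spin structure extend uniquely, shows that $\sigma(X)$ is an integer. Hence $8n\in\Z$, up to the integral index correction.

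That gives only $\frac{1}{16}$, so the key step, and the main obstacle, is gaining the extra factor of $2$. I would try to use the real structure of $X$. The branched-cover filling carries the extended involution $\tau_k$ (a power of the deck group acting on the covering $4$--manifold), and the lift is still odd. For odd real structures the relevant index is the $I$--fixed part of the Dirac index, which is complex via $j$ and equals half of $\operatorname{ind}_{\C}D^+_X$. It is therefore natural to use the real relative Bauer--Furuta map $(\C_+^{\operatorname{ind}/2}\oplus\widetilde{\R}^{?})^+\to(\widetilde{\R}^{b^+_{-}})^+\wedge SWF_R(Y)$. This shows that the class is locally equivalent to a genuine sphere shift up to integral data, with its $\C_+$--grading equal to $\frac{1}{2}\operatorname{ind}^{APS}+\frac{n}{2}$ on the nose, i.e.\ $\sigma(X)/16$ plus integers. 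Then I would show $\sigma(X)\equiv 0\pmod 2$. Since $Y$ is a $\Z_2$--homology sphere, the intersection form of $X$ is unimodular over $\Z_{(2)}$ and even, since $X$ is spin. An even form that is $2$--adically unimodular has even rank, and a parity count then gives $\sigma(X)\equiv \operatorname{rank}\equiv 0\pmod 2$. The resulting grading therefore lies in $\frac18\Z$. The fixed-point dimensions are integers, so each derived invariant lies in $\frac{1}{8}\Z$. If the real Bauer--Furuta route is too heavy, the fallback is the direct arithmetic: the $\C_+$--grading is $\frac{n}{2}$ plus integers, and $n\equiv\sigma(X)/8$ with $\sigma(X)$ even.
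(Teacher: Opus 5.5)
Your fallback argument is the paper's proof. You write $n(Y_k,\mathfrak s_k,g)=\operatorname{ind}^{\mathrm{APS}}_{\C}(D^+_W)+\sigma(W)/8$ for a spin filling $W$, where the APS index is an integer and there are no extra $\eta$--terms; $\sigma(W)$ is even because an even form whose determinant divides the odd number $|H_1(Y_k;\Z)|$ is a nonsingular alternating form mod $2$, hence has even rank, so the $\C_+$--grading coordinate $\dim_{\C}+\frac n2$ lies in $\frac18\Z$. The real Bauer--Furuta detour is unnecessary, and its claim that $SWF_R$ is locally a sphere shift is false in general. Also, spinness of the filling does not follow from $H_1(Y;\Z_2)=0$, so simply take any (for instance simply connected) spin filling, as the paper does.
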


\begin{proof}
Put $Y_k:=\Sigma_{2^k}(K)$.
One can see that \(Y_k\) is a \(\mathbb Z_2\)--homology sphere. 
Let \(\mathfrak s_k\) be the unique spin structure on \(Y_k\).  Choose a simply connected compact spin \(4\)--manifold \(W\) bounding
\((Y_k,\mathfrak s_k)\), and set $
L:=H_2(W;\mathbb Z)/\operatorname{Tor}$.
Since \(H_1(Y_k;\mathbb Z)\) has odd order, the long exact sequence of
the pair \((W,Y_k)\), together with Poincaré--Lefschetz duality, implies
that the intersection form $
Q_W\colon L\times L\longrightarrow\mathbb Z$
has odd determinant. Since \(W\) is spin, \(Q_W\) is even. Therefore
\(Q_W\bmod 2\) is a nonsingular alternating form, and hence $
b_2(W)=\operatorname{rank}L\equiv0\pmod2$.
It follows that \(\sigma(W)\equiv0\pmod2\).

Recall that the grading correction term is $
n(Y_k,\mathfrak s_k,g)
=
\operatorname{ind}^{\mathrm{APS}}_{\mathbb C}(D_W^+)
+
\frac{\sigma(W)}8$.
The APS index is an integer, and \(\sigma(W)\) is even.  Consequently, $
n(Y_k,\mathfrak s_k,g)\in\frac14\mathbb Z$.

The real Floer spectrum is represented by a triple whose complex grading
coordinate is
\[
\dim_{\mathbb C}(W^0_{-\lambda})^I
+
\frac12n(Y_k,\mathfrak s_k,g).
\]
Hence its rational grading coordinate belongs to $\frac18\mathbb Z$.
The definitions of
\(\delta_R,\underline{\delta}_R,\overline{\delta}_R\) and
\(\kappa_R\) differ from this grading coordinate by half-integral
quantities.  Therefore $
\delta_R^{(k)}(K), 
\underline{\delta}_R^{(k)}(K), 
\overline{\delta}_R^{(k)}(K), 
\kappa_R^{(k)}(K)
\in
\frac18\mathbb Z$.
\end{proof}

\subsection{Real 4--orbifolds and real Bauer--Furuta invariants}

Let $(X,\mathfrak{s})$ be an oriented spin $4$--orbifold with only isolated
singularities and with non-empty boundary. Let
$
\tau:X\to X
$
be an orientation-preserving smooth involution preserving the spin structure.
We fix a lift
$
\widetilde{\tau}:P_{\mathfrak{s}}\to P_{\mathfrak{s}}
$
of $d\tau$ to the orbifold principal $\Spin(4)$--bundle associated with
$\mathfrak{s}$. We say that $(X,\mathfrak{s},\tau,\widetilde{\tau})$ is odd if
$
\widetilde{\tau}^2=-1
$
on the spin bundle.

Put
$
G:=\langle j\rangle\cong\Z_4, 
H:=\langle -1\rangle\subset G.
$
We also write
$
b^+_-(X,\tau):=
\dim H^+(X;\R)^{-\tau^*}.
$
Under the assumptions below, this number is equal to $
b^+(X)-b^+(X/\Z_2).
$
The following is the main technical theorem in this paper. 
\begin{thm}\label{thm:BF}
Let $(X,\mathfrak{s},\tau,\widetilde{\tau})$ be an odd spin $4$--orbifold with
only isolated singularities and with connected boundary
$
\partial X=Y.
$
Assume
$
b_1(X)-b_1(X/\Z_2)=b_1(Y)-b_1(Y/\Z_2)=0.
$
Then one can construct a $G$--equivariant stable homotopy class represented by
a pointed $G$--equivariant map
$$
BF_{(X,\mathfrak{s},\tau,\widetilde{\tau})}:
\left(\C_+^{\,-\frac{1}{2}w}\right)^+
\longrightarrow
\left(\widetilde{\R}^{\,b^+_-(X,\tau)}\right)^+
\wedge
SWF_R(Y,\mathfrak{s}|_Y,\tau|_Y,\widetilde{\tau}|_Y).
$$
Here
\[
w
:=
\operatorname{ind}_{\C}^{\operatorname{orb}}D_X^+
-
n(Y,\mathfrak{s}|_Y,g_Y),
\]
where $\operatorname{ind}_{\C}^{\operatorname{orb}}D_X^+$ is the orbifold Dirac index with APS condition with a fixed $\Z_2$-invariant metric $g$ on $X$ which is product near the boundary and $g_Y$ is the boundary metric induced by  $g$.  This combination is
independent of the metric and the APS auxiliary choices.  It will be
identified with the Fukumoto--Furuta invariant in
\cref{prop:wR-vs-orbifold-index}.

Moreover, after taking $H$--fixed points and using the standard identification
of the $H$--fixed part of the normalized boundary Floer object with $S^0$, the
induced map is $G$--equivariantly stably homotopic to the map
$$
S^0
\longrightarrow
\left(\widetilde{\R}^{\,b^+_-(X,\tau)}\right)^+
$$
obtained by one-point compactifying the linear inclusion of the zero vector
space into $H^+(X;\R)^{-\tau^*}$.
\end{thm}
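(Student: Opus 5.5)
We follow the standard Manolescu-type construction of relative Bauer--Furuta invariants, adapted to the orbifold setting and restricted to the fixed-point set of the real involution. Attach a cylindrical end $Y\times[0,\infty)$ to $X$, or work with $X$ carrying a $\tau$-invariant metric that is a product near $\partial X$. On the orbifold we use the orbifold Sobolev spaces: local uniformizing charts near each isolated singular point, with equivariant sections. Consider the Seiberg--Witten map on the double Coulomb slice
\[
SW\colon L^2_{k+1/2}\bigl(i\Omega^1_{CC}(X)\oplus\Gamma(S^+)\bigr)\longrightarrow L^2_{k-1/2}\bigl(i\Omega^+(X)\oplus\Gamma(S^-)\bigr)\times V(Y),
\]
where the $V(Y)$-component is the restriction to the boundary. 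Define the real involution $I(a,\phi)=(-\tau^*a, j\widetilde\tau\phi)$ on both sides. Because $\widetilde\tau$ is odd, $I^2=1$; and $I$ commutes with the map. The restriction to the $I$-fixed subspaces is then $G=\langle j\rangle$-equivariant, with $j$ giving complex structures on the spinor parts. The orbifold Fredholm theory (orbifold APS) is local near the singular points, so all the usual properties hold verbatim. The hypothesis $b_1(X)-b_1(X/\Z_2)=0$ ensures that the fixed-point part of $H^1$ is anti-invariant-free, so harmonic $1$-forms in the real slice do not obstruct the Coulomb gauge fixing. The boundary hypothesis is exactly the one required to define $SWF_R(Y)$.

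Next we take finite-dimensional approximations $U'_n\to U_n\times V^{\mu}_{-\lambda}(Y)^I$ of the linear part $L=(d^+\oplus D^+, p^\mu_{-\infty}r)$ plus compact perturbation. The key analytic input is the boundedness lemma: solutions of the approximated equation on $X$ whose boundary values flow into bounded trajectories of the cut-off CSD flow on $V^\mu_{-\lambda}(Y)^I$ lie in a uniform ball. This is Manolescu/Khandhawit's argument; only local elliptic estimates and Weitzenb\"ock are used, and these hold on orbifolds since spinors pull back to uniformizing charts. The resulting $G$-map goes from the one-point compactification of $U'_n$ to $U_n^+\wedge (N^I/L^I)$. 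Desuspending by the fixed parts gives a map from a sphere of virtual dimension $\ind(L^I)$. The $1$-form part contributes $-b^+_-(X,\tau)$ copies of $\widetilde\R$ plus the boundary grading $\dim(V^0_{-\lambda})^I$. The spinor part contributes complex dimension $\tfrac12\ind_\C^{\mathrm{orb}}D^+_X$ plus $\dim_\C(W^0_{-\lambda})^I$, because $j$ makes the $I$-fixed kernels and cokernels complex of half the complex dimension of the full ones. This uses the oddness of $\widetilde\tau$: the $I$-fixed part of a complex $\Z_4$-module $\ker$ with $\widetilde\tau^2=-1$ has real dimension equal to $\dim_\C$.

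Comparing with the normalization $\tfrac12 n(Y,\s,g_Y)$ in $SWF_R(Y)$, the source has $\C_+$-grading $\tfrac12(\ind^{\mathrm{orb}}_\C D^+_X - n)=\tfrac12 w$ in magnitude. Tracking the sign convention for the target shift, we obtain the map $(\C_+^{-w/2})^+\to (\widetilde\R^{b^+_-})^+\wedge SWF_R(Y)$. Metric independence of $w$ is shown by the APS variation formula: changing $g$ changes $\ind^{\mathrm{orb}}$ by the spectral flow on $Y$, and $n$ changes by the same amount. The $\eta$-terms at the singular points are local and unchanged.

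For the $H$-fixed statement, note that $H=\langle-1\rangle$ acts by $-1$ on spinors and trivially on forms. So the $H$-fixed map is the reducible (spinor-zero) part of the approximation, which is the linear map $a\mapsto (d^+a,\,$boundary$)$ on $I$-fixed forms plus the quadratic term, and the quadratic term vanishes when $\phi=0$. On the boundary, the $H$-fixed Conley index is the sphere of $(V^0_{-\lambda})^I$, identified with $S^0$ after the grading normalization. The map is therefore a linear injection whose cokernel is $H^+(X;\R)^{-\tau^*}$ by Hodge theory (the $1$-form part carries $-\tau^*$). Homotoping away the injective part yields the inclusion of $0$ into $H^+(X)^{-\tau^*}$.

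The main obstacle is the orbifold boundedness and compactness in the real setting together with the correct index and grading bookkeeping. We will handle it by reducing every local estimate to $\Z_{\widetilde a}$-invariant estimates on uniformizing charts, which are compatible with $\tau$ by \cref{lem:extension over orbifold caps}.
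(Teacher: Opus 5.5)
Your proposal is correct and follows essentially the same route as the paper. You restrict the double-Coulomb-slice orbifold Seiberg--Witten map (with boundary spectral projection) to the $I$-fixed subspace and take the Manolescu--Khandhawit finite-dimensional approximation into $V_n^+\wedge N/L$. You then read off the gradings from the $I$-fixed index, with $j$ halving the complex Dirac index, and identify the $H$-fixed part as the linear $1$-form map with cokernel $H^+(X;\R)^{-\tau^*}$.

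There are two soft spots, both at the same outline level as the paper's own argument. First, you assert rather than derive the sign of the $\mathbb{C}_+$-exponent. Second, you invoke the lemma on extension over orbifold caps, which concerns only the specific $\check X$, for chart-compatibility of $\tau$ in a general $X$; there, compatibility is simply part of what it means to have an orbifold involution with a spin lift.
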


The proof of this theorem is essentially a combination of the original construction of (real versions of) Bauer--Furuta invariants for 4-manifolds with boundary \cite{Ma03,Kha15, KMT21} and orbifold Seiberg--Witten theory for 4-manifolds with cyclic isolated singularities \cite{FF00}. We do not repeat the arguments; instead, we just give an outline of the proof.

We now recall the analytic setup used to construct the map in
\cref{thm:BF}. Choose an orbifold Riemannian metric $g$ on $X$ which is
$\tau$--invariant and product near the boundary. Let
$
P_{\mathfrak{s}}\to X
$
be the orbifold principal $\Spin(4)$--bundle associated with the spin structure.
The positive and negative spinor bundles are
$$
S^\pm_{\mathfrak{s}}
:=
P_{\mathfrak{s}}\times_{\Spin(4)}\C^2_\pm.
$$
Since $\mathfrak{s}$ is a spin structure, the determinant line bundle is
trivial. We denote by $A_0$ the product connection on this trivial determinant
line bundle near the boundary.

The Clifford multiplication is an orbifold bundle homomorphism
$$
\rho:T^*X\to \operatorname{Hom}_{\C}(S^+_{\mathfrak{s}},S^-_{\mathfrak{s}}).
$$
A $U(1)$--connection $A$ on the determinant line, together with the
Levi--Civita connection of $g$, determines the orbifold Dirac operator
$$
D_A^+:\Gamma(S^+_{\mathfrak{s}})\to\Gamma(S^-_{\mathfrak{s}}).
$$
We denote by $F_A^+\in\Omega^+(X;i\R)$ the self-dual part of the curvature of
$A$.

Fix $k\gg0$. The configuration space is
$$
\mathcal C_k(X)
:=
\left(A_0+\check L^2_k(i\Lambda^1_X)\right)
\oplus
\check L^2_k(S^+_{\mathfrak{s}}),
$$
where $\check L^2_k$ denotes the Sobolev completion defined using orbifold
charts, the orbifold metric, and orbifold bundle connections. The gauge group is
$$
\mathcal G_{k+1}(X)
:=
\check L^2_{k+1}(X,S^1),
$$
the group of orbifold $U(1)$--gauge transformations of Sobolev class
$\check L^2_{k+1}$. It acts on $\mathcal C_k(X)$ by
$$
u\cdot(A,\Phi)
=
(A-u^{-1}du,u\Phi).
$$

We work in the double Coulomb slice
$
\mathcal U_k(X)
:=
\check L^2_k(i\Lambda^1_X)_{\operatorname{CC}}
\oplus
\check L^2_k(S^+_{\mathfrak{s}}),
$
where
$$
\check L^2_k(i\Lambda^1_X)_{\operatorname{CC}}
:=
\left\{
a\in\check L^2_k(i\Lambda^1_X)
\;\middle|\;
d_X^*a=0,\quad d_Y^*(r(a))=0
\right\}.
$$
Here $r(a)$ denotes the restriction of $a$ to the boundary, and $d_Y^*$ is the
formal adjoint of $d$ on $Y=\partial X$.

Let
$$
V(Y)=L^2_{k-1/2}(i\ker d_Y^*)\oplus L^2_{k-1/2}(\mathbb S_Y)
$$
be the Coulomb slice on the boundary, and let
$
r:\mathcal U_k(X)\to V(Y)
$
be the restriction map. For a real number $\mu$, let
$$
p^\mu_{-\infty}:V(Y)\to V^\mu_{-\infty}(Y)
$$
be the projection to the direct sum of eigenspaces of the boundary linearized
operator with eigenvalues in $(-\infty,\mu]$.

The Seiberg--Witten map with boundary projection is
$$
\mathcal F_X^\mu:
\mathcal U_k(X)
\to
\check L^2_{k-1}(i\Lambda_X^+)
\oplus
\check L^2_{k-1}(S^-_{\mathfrak{s}})
\oplus
V^\mu_{-\infty}(Y),
$$
defined by
$$
\mathcal F_X^\mu(a,\phi)
:=
\left(
\frac{1}{2}F^+_{A_0+a}
-
\rho^{-1}((\phi\phi^*)_0),
\;
D^+_{A_0+a}\phi,
\;
p^\mu_{-\infty}r(a,\phi)
\right).
$$
We write
$$
\mathcal F_X^\mu=L_X^\mu+C_X,
$$
where the linear part is
$$
L_X^\mu(a,\phi)
=
\left(
d^+a,
D^+_{A_0}\phi,
p^\mu_{-\infty}r(a,\phi)
\right),
$$
and the nonlinear part is
$$
C_X(a,\phi)
=
\left(
-\rho^{-1}((\phi\phi^*)_0),
\rho(a)\phi,
0
\right).
$$
The map $C_X$ is compact as a map from $\mathcal U_k(X)$ to the target Sobolev
space above.

The odd lift $\widetilde{\tau}$, together with the quaternionic element $j$,
defines an involution on the configuration space by
$$
I_X(a,\phi)
=
(-\tau^*a,\;j\,\widetilde{\tau}(\phi)).
$$
The map $I_X$ preserves the double Coulomb slice, and the Seiberg--Witten map
$\mathcal F_X^\mu$ is compatible with the corresponding involution on the
target. Therefore, we may restrict $\mathcal F_X^\mu$ to the fixed-point space
$
\mathcal U_k(X)^{I_X}.
$
Taking finite-dimensional approximation of this restricted map gives the
$G$--equivariant stable homotopy class
$
BF_{(X,\mathfrak{s},\tau,\widetilde{\tau})}
$
appearing in \cref{thm:BF}.

Now, we suppose there is a smooth involution $\tau : X \to X$ in the orbifold sense. We suppose $\tau^* \s \cong \s$ as an orbifold spin structure, and we suppose there is an order four lift 
\[
\wt{\tau} : P_\s \to P_\s
\]
of $\tau$ in the orbifold sense. We have an induced action of $\wt{\tau}$:
\[
\wt{\tau}^* : \mathcal{U}_{k}(X)  \to \mathcal{U}_{k}(X) \text{ and } \wt{\tau}^* : \check L^2_{k-1}(i\Lambda^+_{X} \oplus S^-_{X})\to \check L^2_{k-1}(i\Lambda^+_{X} \oplus S^-_{X})
\]
and $ \wt{\tau}^* |_Y : V^\mu_{-\infty}(Y) \to  V^\mu_{-\infty}(Y) $. 
Since the action $\wt{\tau}$ commutes with $\mathrm{Spin}(4)$-action, we see that the Seiberg--Witten map $\mathcal{F}_{X}$ commutes with $\wt{\tau}^*$.
Now, we define 
\[
I := j \circ \wt{\tau}^* : \mathcal{U}_{k}(X) \to \mathcal{U}_{k}(X),
\]
where $j$ acts as $j \in  \mathrm{Pin(2)}$.
Similarly we have actions of $I$ on $L^2_{k-1}(i\Lambda^+_{X} \oplus S^-_{X})$ and $V^\mu_{-\infty}$. This gives a real Seiberg--Witten map 
\[
\mathcal{F}_{X}^I+p^{\mu}_{-\infty} \circ r^I : \mathcal{U}_{k}^I(X) \to \check L^2_{k-1}(i\Lambda^+_{X} \oplus S^-_{\s})^I \oplus V^\mu_{-\infty}(Y)^I
\]
as the restriction of $\mathcal{F}_{X}+p^{\mu}_{-\infty} \circ r$. 
We now construct the finite-dimensional approximation of the real Seiberg--Witten
map on $X$. Let
$$
\mathcal V_{k-1}^I(X)
:=
\check L^2_{k-1}(i\Lambda_X^+)^I
\oplus
\check L^2_{k-1}(S^-_{\mathfrak{s}})^I.
$$
We write the $I$--fixed Seiberg--Witten map with boundary restriction as
$$
\mathcal F_X^I:
\mathcal U_k(X)^I
\longrightarrow
\mathcal V_{k-1}^I(X)\oplus V(Y)^I.
$$
It is of the form
$
\mathcal F_X^I
=
\left(
L_X^I+C_X^I,\; r^I
\right),
$
where $L_X^I$ is the restriction of the linear part, $C_X^I$ is compact, and
$r^I$ is the boundary restriction.

Let
$
l_Y^I:V(Y)^I\to V(Y)^I
$
be the linearized boundary operator. For $\lambda>0$, let
$
V^\lambda_{-\lambda}(Y)^I
$
denote the direct sum of the eigenspaces of $l_Y^I$ with eigenvalues in
$(-\lambda,\lambda]$.

Choose increasing finite-dimensional subspaces
$$
V_1\subset V_2\subset\cdots\subset \mathcal V_{k-1}^I(X)
$$
and an increasing sequence $\lambda_n\to\infty$ such that the orthogonal
projections
$$
P_n:
\mathcal V_{k-1}^I(X)\oplus V(Y)^I
\longrightarrow
V_n\oplus V^{\lambda_n}_{-\lambda_n}(Y)^I
$$
converge strongly to the identity. We further assume that
$$
\operatorname{Coker}
\left(
L_X^I\oplus p^{\lambda_n}_{-\lambda_n}r^I
\right)
\subset
V_n\oplus V^{\lambda_n}_{-\lambda_n}(Y)^I
$$
for every sufficiently large $n$. Here
$
p^{\lambda_n}_{-\lambda_n}:V(Y)^I\to V^{\lambda_n}_{-\lambda_n}(Y)^I
$
is the $L^2$--orthogonal projection.

Define
$$
U_n
:=
\left(
L_X^I\oplus p^{\lambda_n}_{-\lambda_n}r^I
\right)^{-1}
\left(
V_n\oplus V^{\lambda_n}_{-\lambda_n}(Y)^I
\right).
$$
 The
finite-dimensional approximation is
$
\mathcal F_n:
U_n
\longrightarrow
V_n\oplus V^{\lambda_n}_{-\lambda_n}(Y)^I,
$
given by
$$
\mathcal F_n(u)
:=
P_n
\left(
L_X^Iu+C_X^I(u),\;
p^{\lambda_n}_{-\lambda_n}r^I(u)
\right).
$$
Let $(N_n,L_n)$ be an index pair for the finite-dimensional approximation of the
downward gradient flow of the Chern--Simons--Dirac functional on
$V^{\lambda_n}_{-\lambda_n}(Y)^I$. Thus
$
N_n/L_n
$
is a $\Z_4$-equivariant Conley index used in the definition of
$SWF_R(Y,\mathfrak{s}|_Y,\tau|_Y,\widetilde{\tau}|_Y)$. If we take an index pair nicely, we can ensure: 

\begin{prop}\label{prop:relative-real-BF-map}
For all sufficiently large $n$, there exist $R>0$ and $\varepsilon_n>0$ such that
the finite-dimensional approximation $\mathcal F_n$ induces a well-defined
pointed $G$--equivariant map
$$
B(U_n;R)/S(U_n;R)
\longrightarrow
\left(
V_n/B(V_n;\varepsilon_n)^c
\right)
\wedge
(N_n/L_n).
$$
\end{prop}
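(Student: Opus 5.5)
We follow the standard Manolescu--Khandhawit construction of relative Bauer--Furuta maps, restricted to the $I$--fixed subspaces and carried out with orbifold Sobolev spaces $\check L^2_k$. The map will be
\[
u\longmapsto \Bigl(\pi_{V_n}\bigl(L_X^Iu+C_X^I(u)\bigr),\; \bigl[p^{\lambda_n}_{-\lambda_n}r^I(u)\bigr]\Bigr),
\]
defined on $B(U_n;R)$. Points whose first component lies outside $B(V_n;\varepsilon_n)$ are sent to the basepoint of $V_n/B(V_n;\varepsilon_n)^c$. The second component is interpreted in $N_n/L_n$ after a flow-time reparametrisation. Because $I$ commutes with $j$ and $P_n$, $L_X^I$, $r^I$ are $G$--equivariant, equivariance of every construction below is automatic. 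All that must be checked is that the map is well defined, i.e.\ that $S(U_n;R)$ is sent to the basepoint.

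\textbf{Step 1: uniform boundedness.} We first establish the orbifold analogue of the boundedness theorem. There is $R_0$ such that any finite-energy solution $(u,\gamma)$ of the $I$--fixed Seiberg--Witten equations on $X$ glued to a half-trajectory $\gamma\colon[0,\infty)\to V(Y)^I$ of the CSD gradient flow satisfies $\|u\|_{\check L^2_k}<R_0$ and $\|\gamma(t)\|<R_0$. The argument uses the Weitzenböck formula and the maximum principle for $|\phi|^2$, which hold verbatim on orbifold charts since the cyclic groups act isometrically. Gauge fixing in the double Coulomb slice reduces the gauge group to constants. The condition $b_1(X)-b_1(X/\Z_2)=0$ makes the harmonic $1$--form part of $\mathcal U_k(X)^{I}$ trivial: $-\tau^*$ acts by $-1$ on invariant classes. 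Likewise $b_1(Y)-b_1(Y/\Z_2)=0$ gives compactness on $Y$. Elliptic bootstrapping in $\check L^2_k$ then bounds all higher norms.

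\textbf{Step 2: approximate solutions.} Next we show that the estimate persists for approximate solutions of $\mathcal F_n$. We argue by contradiction. Suppose there are $u_n\in U_n$ with $\|u_n\|=R$ (where $R=2R_0$), with $\|\pi_{V_n}\mathcal F_n(u_n)\|\to0$, and with $p^{\lambda_n}_{-\lambda_n}r^I(u_n)$ flowing for long times inside a large ball without leaving it. Compactness of $C_X^I$ and $P_n\to\mathrm{id}$ strongly then yield a convergent subsequence. Its limit is a genuine solution glued to a trajectory, with norm $R$, contradicting Step 1. This is the Khandhawit compactness argument. The only orbifold input is that $C_X^I$ is compact $\check L^2_k\to\check L^2_{k-1}$ (Rellich on orbifold charts) and that $L_X^I\oplus p^{\lambda}_{-\lambda}r^I$ is Fredholm, via orbifold APS theory as in \cite{FF00}. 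Fix $\varepsilon_n$ from this argument for each $n$.

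\textbf{Step 3: choice of index pair (main obstacle).} We choose $(N_n,L_n)$ via Manolescu's construction of index pairs $K_1\subset N_n$, $K_2\subset L_n$. Here $K_1$ is the image of the boundary restriction of $\mathcal F_n^{-1}(B(V_n;\varepsilon_n))\cap B(U_n;R)$, and $K_2$ is the image of the same set intersected with $S(U_n;R)$. By Step 2, $K_2$ is disjoint from the invariant set, so $K_2$ can be absorbed into the exit set. Manolescu's lemma then provides an index pair with $K_1\subset N_n$ and $K_2\subset L_n$, and we can take $G$--invariant index pairs by averaging over the finite group $G$. With these choices, $S(U_n;R)$ maps either to $B(V_n;\varepsilon_n)^c$ or into $L_n$, hence to the basepoint. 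The delicate point is verifying that $K_2$ avoids a neighborhood of the maximal invariant set uniformly in $n$ for large $n$; this is exactly where Step 2 is used. Independence of the choices up to $G$--homotopy and suspension follows by the usual continuation argument.
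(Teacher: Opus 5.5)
This is essentially the paper's argument. The paper simply cites the standard Manolescu--Khandhawit compactness theorem to conclude that, for large $n$, $R\gg0$ and small $\varepsilon_n$, both $S(U_n;R)$ and the preimage of $B(V_n;\varepsilon_n)^c\cup L_n$ go to the basepoint, and your Steps 1--3 are that argument written out in the $I$-fixed orbifold setting. Two points in Step 3 should be tightened. Manolescu's index-pair lemma needs every point of $K_2$ to leave the isolating neighborhood $A$ in forward time, i.e.\ $K_2$ must be disjoint from the set $A^+$ of points whose forward orbit stays in $A$; avoiding the maximal invariant set is not enough, though this is exactly what your Step 2 contradiction hypothesis proves. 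Also, $G$-invariance of $(N_n,L_n)$ comes from running that construction with the $G$-equivariant flow, not from averaging.
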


This follows from the standard compactness argument for the relative
finite-dimensional approximation; see \cite{Ma03,Kha15}.

\begin{proof}
This follows from the standard compactness theorem for the relative
finite-dimensional approximation: for sufficiently large $n$, after choosing
$R\gg0$ and $\varepsilon_n>0$ sufficiently small, the map $\mathcal F_n$ sends
$S(U_n;R)$ and the preimage of $B(V_n;\varepsilon_n)^c\cup L_n$ to the base
point. Hence $\mathcal F_n$ descends to the desired pointed $G$--equivariant
map.
\end{proof}

Using \cref{prop:relative-real-BF-map}, we prove \cref{thm:BF}. 
\begin{proof}[Proof of \cref{thm:BF}]
Since
$$
B(U_n;R)/S(U_n;R)\cong U_n^+, \qquad
V_n/B(V_n;\varepsilon_n)^c\cong V_n^+,
$$
\cref{prop:relative-real-BF-map} gives a pointed $G$--equivariant map
$
U_n^+
\longrightarrow
V_n^+\wedge (N_n/L_n).
$
After stabilizing with respect to $n$, this map defines a stable $G$--equivariant
homotopy class
$
BF_{(X,\mathfrak{s},\tau,\widetilde{\tau})}.
$
Using the index calculation for
$
L_X^I\oplus p^{\lambda_n}_{-\lambda_n}r^I,
$
the domain and target suspensions can be rewritten in the normalized form
$$
BF_{(X,\mathfrak{s},\tau,\widetilde{\tau})}:
\left(\C_+^{\,-\frac{1}{2}w}\right)^+
\longrightarrow
\left(\widetilde{\R}^{\,b^+_-(X,\tau)}\right)^+
\wedge
SWF_R(Y,\mathfrak{s}|_Y,\tau|_Y,\widetilde{\tau}|_Y).
$$
Here
$
b^+_-(X,\tau)=\dim H^+(X;\R)^{-\tau^*}.
$

The stable homotopy class is independent of the choices of the finite-dimensional
subspaces, the cut-off parameters $\lambda_n$, the radius $R$, the numbers
$\varepsilon_n$, and the index pairs. Indeed, different choices can be compared
after passing to a common cofinal sequence of finite-dimensional approximations,
and the corresponding maps are related by the standard continuation homotopy for
the Seiberg--Witten map and for the Conley index of the boundary flow. Thus the
stable class
$
BF_{(X,\mathfrak{s},\tau,\widetilde{\tau})}
$
is well-defined.

It remains to identify the $H$--fixed point part. On the $H$--fixed point set, the
spinor directions vanish and the Seiberg--Witten map reduces to its linear
$1$--form part. More precisely, the $H$--fixed part of the finite-dimensional
approximation is the one-point compactification of the linear map
$$
L_{X,\R}^{I,H}
:
\left(\check L^2_k(i\Lambda^1_X)_{\operatorname{CC}}\right)^{I,H}
\longrightarrow
\left(\check L^2_{k-1}(i\Lambda_X^+)\oplus V^0_{-\infty}(Y)\right)^{I,H}.
$$
By the standard calculation, this linear map is injective and its cokernel
is naturally identified with
$
H^+(X;\R)^{-\tau^*}.
$
Therefore, after identifying the $H$--fixed point set of the normalized boundary
Floer object with $S^0$, the $H$--fixed part of the Bauer--Furuta map is stably
homotopic to the one-point compactification of a linear inclusion
$
0
\longrightarrow
H^+(X;\R)^{-\tau^*}.
$
This completes the proof. 
\end{proof}

Let $(Y,\mathfrak{s})$ be an oriented closed spin 
$3$--manifold. Let
$(X,\widetilde{\mathfrak{s}})$ be an oriented compact spin
$4$--orbifold with only isolated singularities and with boundary
$
\partial (X,\widetilde{\mathfrak{s}})
=
(Y,\mathfrak{s}).
$
Choose a compact smooth spin $4$--manifold $(W,\mathfrak{s}_W)$ such that
$
\partial(W,\mathfrak{s}_W)=-(Y,\mathfrak{s}).
$
We emphasize that no
involution on $W$ is required. Define the closed spin $4$--orbifold
$$
Z_{X,W}:=X\cup_Y W.
$$

\begin{defn}\label{def:real_w}
The {\it Fukumoto--Furuta's $w$-invariant} is defined as 
$$
w(X,\widetilde{\mathfrak{s}})
:=
\operatorname{ind}_{\C}^{\operatorname{orb}}D_{Z_{X,W}}
+
\frac{1}{8}\sigma(W)
\in\Q,
$$
where $D_{Z_{X,W}}$ is the complex spin Dirac operator on the closed spin
orbifold $Z_{X,W}$.
\end{defn}
By the excision argument, $
w(X,\widetilde{\mathfrak{s}})$
is independent of the choice of the spin filling
$(W,\mathfrak{s}_W)$.  We shall call $
w_R(X,\widetilde{\mathfrak{s}})
:=
\frac12w(X,\widetilde{\mathfrak{s}})$ 
the real $w$--invariant.

We next compare this invariant with the relative orbifold Dirac index appearing
in the Bauer--Furuta map. Choose a product-type orbifold metric on $X$ near the
boundary, and let $g_Y$ be its restriction to $Y$. Let
$
\operatorname{ind}_{\C}^{\operatorname{orb}}D_X^+
$
denote the APS index of the positive spin Dirac operator on the spin orbifold
$(X,\widetilde{\mathfrak{s}})$ with respect to the boundary metric $g_Y$. This
is the analytical index denoted by
$
\operatorname{ind}_{\C}^{\operatorname{orb}}D_{(X,\widetilde{\mathfrak{s}})}^+
$
in the construction of the relative Bauer--Furuta invariant.

\begin{prop}\label{prop:wR-vs-orbifold-index}
With the above convention,
$$
\operatorname{ind}_{\C}^{\operatorname{orb}}D_X^+
=
w(X,\widetilde{\mathfrak{s}})
+
n(Y,\mathfrak{s},g_Y).
$$
\end{prop}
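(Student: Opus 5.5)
The identity follows from additivity of the Dirac index under gluing, applied to the closed orbifold $Z_{X,W}=X\cup_Y W$ of \cref{def:real_w}. We pick any compact spin $W$ with $\partial(W,\mathfrak{s}_W)=-(Y,\mathfrak{s})$. We choose product metrics near the boundary on both $X$ and $W$ that restrict to $g_Y$, so that $Z_{X,W}$ inherits a smooth orbifold metric. The singularities of $Z_{X,W}$ are isolated and lie in the interior of $X$, so the collar of $Y$ is a genuine manifold. Hence the usual APS gluing (excision) formula applies verbatim in the orbifold setting:
\[
\operatorname{ind}_{\C}^{\operatorname{orb}}D_{Z_{X,W}}
=
\operatorname{ind}_{\C}^{\operatorname{orb}}D_X^+
+
\operatorname{ind}^{APS}_{\C}D_W^+
+
h,
\]
where $h$ is a kernel correction term measuring the harmonic spinors of $D_Y$.

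The key step is to fix the APS conventions so that the correction term $h$ is absorbed consistently. In the convention implicit in the definition of $n(Y,\mathfrak{s},g)$ and in the spectral-flow remark (where $n(g_1)-n(g_0)=-\mathrm{sf}_\C$), the boundary condition on $X$ projects onto eigenvalues $\le 0$. On $W$, which has the reversed boundary orientation, the boundary condition is the complementary one, and with this choice $h=0$. This is exactly how the relative Bauer--Furuta map uses $p^\mu_{-\infty}$. I would check this by the standard argument: deform through a long neck, glue approximate solutions, and match kernels and cokernels. This step, the bookkeeping of $\ker D_Y$ and the sign conventions, is the only real obstacle. If the conventions gave $h=\dim\ker D_Y$ instead, that term would be compensated by the corresponding convention inside $n$. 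Either way, the outcome must be metric independent, because both sides of the claimed identity change by $-\mathrm{sf}_\C$ under a change of $g_Y$.

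Next, we apply the definition of $n$ to the manifold $-W$, which bounds $(Y,\mathfrak{s})$ with the correct orientation. Orientation reversal gives $\operatorname{ind}^{APS}_{\C}D_{-W}^+=-\operatorname{ind}^{APS}_{\C}D_W^+$ under the complementary boundary conditions, up to the same kernel term. Combining this with $\sigma(-W)=-\sigma(W)$ yields
\[
n(Y,\mathfrak{s},g_Y)=-\operatorname{ind}^{APS}_{\C}D_W^+-\frac{\sigma(W)}{8}.
\]
Substituting into the gluing formula gives
\[
w(X,\widetilde{\mathfrak{s}})
=\operatorname{ind}_{\C}^{\operatorname{orb}}D_{Z_{X,W}}+\frac{\sigma(W)}{8}
=\operatorname{ind}_{\C}^{\operatorname{orb}}D_X^+-n(Y,\mathfrak{s},g_Y),
\]
which is the claim.

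As a consistency check, the independence of $w$ from the choice of $W$ follows: the right-hand side does not involve $W$, and it agrees with the excision statement already recorded after \cref{def:real_w}.
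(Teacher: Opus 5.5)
Your proposal is correct and is essentially the paper's argument. You glue along $Y$ in $Z_{X,W}=X\cup_Y W$ and then convert the $W$--term into $n(Y,\mathfrak{s},g_Y)$ by orientation reversal. The only difference is how $\ker D_Y$ is accounted for. You put the complementary boundary condition on $W$, so your $\operatorname{ind}_{\C}D_W^+$ is really the APS index plus $h_Y=\dim_{\C}\ker D_Y$ and should not be labelled APS; this makes the gluing exact. The paper instead keeps APS conditions on both pieces and cancels the $+h_Y$ from gluing against $n(-Y,\mathfrak{s},g_Y)=-n(Y,\mathfrak{s},g_Y)-h_Y$.
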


\begin{proof}
Let $D_Y$ denote the spin Dirac operator on the boundary $Y$, and put
$
h_Y:=\dim_{\C}\ker D_Y.
$
We use the standard APS boundary condition, without assuming that $D_Y$ is
invertible.

Let $W$ be a compact smooth spin $4$--manifold with
$
\partial W=-Y.
$
Equip $W$ with a product-type metric near the boundary whose boundary value is
$g_Y$ with the reversed orientation. By the APS gluing formula, with the kernel
correction included, we have
$$
\operatorname{ind}_{\C}^{\operatorname{orb}}D_{Z_{X,W}}^+
=
\operatorname{ind}_{\C}^{\operatorname{orb}}D_X^+
+
\operatorname{ind}_{\C}D_W^+
+
h_Y.
$$
Therefore
$$
\begin{aligned}
 w(X,\widetilde{\mathfrak{s}})
&=
\operatorname{ind}_{\C}^{\operatorname{orb}}D_{Z_{X,W}}^+
+
\frac{\sigma(W)}{8}
\\
&=
\operatorname{ind}_{\C}^{\operatorname{orb}}D_X^+
+
\left(
\operatorname{ind}_{\C}D_W^+
+
\frac{\sigma(W)}{8}
\right)
+
h_Y.
\end{aligned}
$$
The expression in parentheses is the Manolescu correction term for $-Y$:
$$
n(-Y,\mathfrak{s},g_Y)
=
\operatorname{ind}_{\C}D_W^+
+
\frac{\sigma(W)}{8}.
$$
When $\ker D_Y$ is non-zero, the orientation reversal formula for this correction
term is
$
n(-Y,\mathfrak{s},g_Y)
=
-n(Y,\mathfrak{s},g_Y)-h_Y.
$
Hence
$$
\begin{aligned}
 w(X,\widetilde{\mathfrak{s}})
&=
\operatorname{ind}_{\C}^{\operatorname{orb}}D_X^+
+
n(-Y,\mathfrak{s},g_Y)
+
h_Y
\\
&=
\operatorname{ind}_{\C}^{\operatorname{orb}}D_X^+
-
n(Y,\mathfrak{s},g_Y).
\end{aligned}
$$
This proves the desired formula.
\end{proof}
The following is the main theorem.

\begin{thm}\label{general BF}
Let $(Y,\mathfrak{s},\tau,\widetilde{\tau})$ be an odd spin closed
$3$--manifold satisfying
$
H^1(Y;\R)^{-\tau^*}=0.
$
Let $(X,\widetilde{\mathfrak{s}},\widetilde{\tau})$ be an odd spin compact
$4$--orbifold with
$
\partial(X,\widetilde{\mathfrak{s}},\widetilde{\tau})
=
(Y,\mathfrak{s},\tau).
$
Assume
$$
H^1(X;\R)^{-\widetilde{\tau}^*}=0
\qquad\text{and}\qquad
H^2(X;\R)^{-\widetilde{\tau}^*}=0.
$$
Then
$$
[SWF_R(Y,\mathfrak{s},\tau)]_{\mathrm{loc}}
=
[(\C_+^{\,-\frac{1}{2} w(X,\widetilde{\mathfrak{s}})})^+]_{\mathrm{loc}}
\in\mathcal{LE}_G.
$$
In particular, the real $w$--invariant
$
\frac{1}{2} w(X,\widetilde{\mathfrak{s}})
$
depends only on the boundary odd spin involutive $3$--manifold
$(Y,\mathfrak{s},\tau,\widetilde{\tau})$.
\end{thm}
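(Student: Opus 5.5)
The plan is to produce two morphisms in $\mathfrak C_G$ between $(\C_+^{-\frac12 w})^+$ and $SWF_R(Y,\mathfrak{s},\tau)$ whose $H$--fixed parts are $G$--homotopy equivalences. One comes from $X$ and one from $-X$, the orientation-reversed orbifold. By the definition of local equivalence, this is all that is required.

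\textbf{Step 1: the map out of the sphere.} First apply \cref{thm:BF} to $(X,\widetilde{\mathfrak{s}},\widetilde\tau)$. The assumption $H^1(X;\R)^{-\widetilde\tau^*}=0$, together with $H^1(Y;\R)^{-\tau^*}=0$, gives $b_1(X)-b_1(X/\Z_2)=b_1(Y)-b_1(Y/\Z_2)=0$, which is exactly the hypothesis of \cref{thm:BF}. The assumption $H^2(X;\R)^{-\widetilde\tau^*}=0$ forces $H^+(X;\R)^{-\tau^*}=0$, so $b^+_-(X,\tau)=0$. By \cref{prop:wR-vs-orbifold-index}, the quantity $w$ in \cref{thm:BF} equals the Fukumoto--Furuta invariant $w(X,\widetilde{\mathfrak{s}})$. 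We therefore obtain a $G$--map
\[
f\colon (\C_+^{-\frac12 w(X,\widetilde{\mathfrak{s}})})^+\to SWF_R(Y,\mathfrak{s},\tau).
\]
By the last part of \cref{thm:BF}, its $H$--fixed part is the compactification of the inclusion $0\to 0$. That is, $f^H$ is the identity of $S^0$ after the standard identification of $SWF_R(Y)^H$ with $S^0$ in the normalized gradings, so it is a $G$--homotopy equivalence.

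\textbf{Step 2: the map in the other direction.} Next apply the same construction to $-X$, which is an odd spin orbifold with boundary $-Y$. This gives a map
\[
(\C_+^{-\frac12 w(-X)})^+\to SWF_R(-Y,\mathfrak{s},\tau).
\]
The cohomological hypotheses are insensitive to orientation, and $b^+_-(-X,\tau)=\dim H^-(X)^{-\tau^*}=0$ again because $H^2(X;\R)^{-\tau^*}=0$. I then use Spanier--Whitehead duality: $SWF_R(-Y)$ is the dual of $SWF_R(Y)$, compatibly with the inverse in $\mathcal{LE}_G$. Dualizing gives a map $SWF_R(Y)\to (\C_+^{\frac12 w(-X)})^+$, and its $H$--fixed part is again an equivalence, since duality commutes with taking $H$--fixed points.

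\textbf{Step 3: matching the gradings.} It remains to check $w(-X,\widetilde{\mathfrak{s}})=-w(X,\widetilde{\mathfrak{s}})$. I would verify this by gluing. The closed orbifold $X\cup_Y(-X)$ is an orbifold double, so its Dirac index vanishes. This can be seen from the orbifold index theorem, because the $\hat A$ integrand and the fixed-point contributions of the double cancel pairwise under orientation reversal; alternatively, the spin double admits an orientation-reversing symmetry. Combining this vanishing with the APS gluing and the kernel correction $h_Y$, exactly as in the proof of \cref{prop:wR-vs-orbifold-index} together with the identity $n(-Y)=-n(Y)-h_Y$, yields the sign relation. Then the two maps from Steps 1 and 2 go between the same two objects, and the theorem follows. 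The independence statement is immediate, since the right-hand side is a local equivalence class determined by $Y$ alone, and the $\C_+$--grading of $(\C_+^q)^+$ is a local equivalence invariant (two spheres of different grading are not locally equivalent, by an equivariant Borsuk--Ulam/degree argument on $H$--fixed points and free parts).

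\textbf{Main obstacle.} I expect the hardest step to be Step 2: making the duality precise in the rationally graded $\Z_4$ category, and checking that the $H$--fixed part of the dual map is indeed an equivalence. If that is awkward, I would instead build the reverse map directly from a relative Bauer--Furuta map of $X$ viewed as a cobordism from $Y$ to the empty set, using the Conley index duality of \cite{Ma03,KMT21}.
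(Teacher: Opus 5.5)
Your proposal is correct and follows the paper's proof. The forward local map comes from \cref{thm:BF} with $b^+_-(X,\tau)=0$, and its grading is identified via \cref{prop:wR-vs-orbifold-index}; the reverse local map comes from applying the same construction to $-X$ and using equivariant Spanier--Whitehead duality. Your Step 3 makes explicit the sign relation $w(-X,\widetilde{\mathfrak{s}})=-w(X,\widetilde{\mathfrak{s}})$, which the paper uses tacitly. Your doubling argument for it works, but it also follows at once from the definition: $Z_{-X,-W}=-Z_{X,W}$, $\sigma(-W)=-\sigma(W)$, and the orbifold Dirac index changes sign under orientation reversal.
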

\begin{proof}
The assumptions on the first cohomology groups imply
\[
b_1(X)-b_1(X/\langle\widetilde{\tau}\rangle)=0,
\qquad
b_1(Y)-b_1(Y/\langle\tau\rangle)=0.
\]
Moreover, the condition $H^2(X;\R)^{-\widetilde{\tau}^*}=0$
implies
\[
b^+_-(X,\widetilde{\tau})
=
\dim H^+(X;\R)^{-\widetilde{\tau}^*}
=
0.
\]
Hence \cref{thm:BF} gives a stable \(G\)--equivariant map
\[
\left(
\C_+^{\,\frac12\left(
-\operatorname{ind}_{\C}^{\operatorname{orb}}D_X^+
+
n(Y,\mathfrak{s},g_Y)
\right)}
\right)^+
\longrightarrow
SWF_R(Y,\mathfrak{s},\tau).
\]
Its restriction to the \(H\)--fixed point sets is a stable equivalence, since
the \(H\)--fixed part of the relative Bauer--Furuta map is the
one-point compactification of
\[
0\longrightarrow H^+(X;\R)^{-\widetilde{\tau}^*}=0.
\]
Applying the same construction to \(-X\), together with equivariant
Spanier--Whitehead duality as in \cite[Section~3.3]{KMT:2023}, gives a
local map in the opposite direction. Therefore, the two objects are
locally equivalent. 
Therefore, we see 
\[
[SWF_R(Y,\mathfrak{s},\tau)]_{\mathrm{loc}}
=
\left[
\left(
\C_+^{\,\frac12\left(-
\operatorname{ind}_{\C}^{\operatorname{orb}}D_X^+
+
n(Y,\mathfrak{s},g_Y)
\right)}
\right)^+
\right]_{\mathrm{loc}}.
\]
By \cref{prop:wR-vs-orbifold-index},
\[
\operatorname{ind}_{\C}^{\operatorname{orb}}D_X^+
-
n(Y,\mathfrak{s},g_Y)
=
w(X,\widetilde{\mathfrak{s}}).
\]
Consequently,
\[
[SWF_R(Y,\mathfrak{s},\tau)]_{\mathrm{loc}}
=
\left[
\left(
\C_+^{\,-\frac12w(X,\widetilde{\mathfrak{s}})}
\right)^+
\right]_{\mathrm{loc}}.
\]

The left-hand side depends only on the boundary odd spin involutive
\(3\)--manifold. Since the \(\C_+\)--suspension grading is preserved under
local equivalence, the number $
\frac12w(X,\widetilde{\mathfrak{s}})$
also depends only on
\((Y,\mathfrak{s},\tau)\).
\end{proof}

\begin{thm}[\Cref{thm:main}]
Let $(Y,\mathfrak{s},\tau)$ be a spin Seifert $3$--manifold with genus-zero oriented base, equipped with an odd involution $\tau$ commuting with the Seifert $S^1$--action. Then there exists a compact spin $4$--orbifold $(\check X,\widetilde{\mathfrak{s}})$ constructed as above such that $\partial(\check X,\widetilde{\mathfrak{s}})=(Y,\mathfrak{s})$ and
\[
[SWF_R(Y,\mathfrak{s},\tau)]_{\mathrm{loc}}
=
\left[
\left(
\C_+^{\, -\frac12 w(\check X,\widetilde{\mathfrak{s}})}
\right)^+
\right]_{\mathrm{loc}}.
\]
Here $w(\check X,\widetilde{\mathfrak{s}})$ denotes the Fukumoto--Furuta $w$--invariant.
\end{thm}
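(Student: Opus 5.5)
The proof reduces the statement to \cref{general BF}. It builds, from the Seifert data of $Y$, an odd spin $4$--orbifold $(\check X,\widetilde{\mathfrak{s}},\widetilde\tau)$ bounding $(Y,\mathfrak{s},\tau)$, and checks the cohomological hypotheses. Since the base has genus $0$, \cref{classify_over} leaves two cases for the induced involution $\overline\tau$ on $\check\Sigma$: either $\overline\tau=\id$ (type C, $\tau$ the $(-1)$--gauge transformation) or $\overline\tau$ is the $180$-degree rotation (type A). We treat type A in detail and then explain the modifications for type C.

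\textbf{Type A.} By \cref{prop:classification-tau-lambda} we may assume $\tau=\tau_\lambda$ up to fiber-preserving conjugacy, with the paired orbifold points symmetric and the two fixed points of $\overline\tau$ assigned to half tunnels.

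\begin{enumerate}
\item \cref{Prop:extension} adjusts the Seifert presentation and the gluing data. It produces $X_0$ with a spin structure $\widetilde{\mathfrak{s}}$ restricting to $\mathfrak{s}$ and an involution $\widetilde\tau$ with an odd lift $\widehat\tau_0$ restricting to the given $\widehat\tau$.
\item \cref{lem:extension over orbifold caps} extends the spin structure, the involution and the odd lift over the cyclic caps $C(\widetilde a_i,\widetilde b_i)$ and $C(\widetilde a''_j,\widetilde b''_j)$. This gives an odd spin orbifold $\check X$ with $\partial\check X=Y$ and isolated singularities.
\item We verify $H^1(\check X;\R)^{-\widetilde\tau^*}=0$ and $H^2(\check X;\R)^{-\widetilde\tau^*}=0$. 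Over $\R$, gluing rational homology balls (the cyclic quotient caps) does not change $H_1$ or $H_2$. So $H_*(\check X;\Q)\cong H_*(X_0;\Q)$ equivariantly for $*=1,2$, and \cref{prop:involution} gives the vanishing of the $(-1)$--eigenspaces by universal coefficients.
\item The hypothesis $H^1(Y;\R)^{-\tau^*}=0$ is exactly \eqref{b_1_cond}.
\item Apply \cref{general BF}.
\end{enumerate}

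\textbf{Type C.} Here $\tau$ is the $(-1)$--gauge transformation. We run the same construction with the involution on $X_0$ given by $-1$ on the $S^1$--factor of every piece; this commutes with all gluing maps. We extend it over each cap by a suitable diagonal root of unity, as in the half-tunnel computation of \cref{lem:extension over orbifold caps}. Rotation in the $S^1$--factor is isotopic to the identity, so $\widetilde\tau_*=\id$ on $H_*(X_0;\Q)$ and both anti-invariant parts vanish. The spin lift's oddness is decided on $Y$ and propagates by connectedness, as in the end of the proof of \cref{Prop:extension}.

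The main obstacle is making the type C extension, and the type A half-tunnel extension, genuinely compatible with the prescribed odd lift. Restricted to a cap boundary, the lift $\widehat\tau_0$ must commute with the lifted cyclic generator $\widehat\gamma_C$ that defines the orbifold spin structure. In type C this holds because all maps are diagonal in a common maximal torus of $\Spin(4)$, exactly as in the half-tunnel argument. The remaining sign ambiguity is a global $\pm1$, fixed by matching with the given $\widehat\tau$ on $Y$. Finally, the exponent $-\tfrac12 w(\check X,\widetilde{\mathfrak{s}})$ in \cref{general BF} is the one in the statement, by \cref{prop:wR-vs-orbifold-index}.
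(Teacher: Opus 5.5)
Your proposal is correct and is essentially the paper's own proof. It splits into type A and type C via \cref{classify_over}, builds $(\check X,\widetilde{\mathfrak{s}},\widetilde\tau)$ from \cref{Prop:extension} and \cref{lem:extension over orbifold caps}, transfers the vanishing of the $(-1)$--eigenspaces from $X_0$ to $\check X$ through the rational-ball caps using \cref{prop:involution}, and applies \cref{general BF}; your type C paragraph is somewhat more explicit than the paper's one-line treatment.

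One point that you, like the paper, leave tacit: in type C you must still choose the tunnels and the Seifert presentation so that $\mathfrak{s}$, whose fiber parameter is $\mu=1$, extends over $X_0$. This can be arranged by shifting $b_k\mapsto b_k+t_ka_k$ with $\sum_k t_k=0$, which flips the parameters $\lambda_k$ in pairs.
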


\begin{proof}
By \cref{classify_over}, the involution $\tau$ is of type A or type C. We first consider the type A case. By \cref{Prop:extension}, the gluing data can be chosen so that there exist a compact spin $4$--manifold $(X_0,\widetilde{\mathfrak{s}}_0)$, an extension
$\widetilde{\tau}:X_0\longrightarrow X_0$
of $\tau$, and an odd lift of $\widetilde{\tau}$ extending the prescribed odd lift on $(Y,\mathfrak{s})$. By  \cref{lem:extension over orbifold caps}, the spin structure, the involution, and its odd lift extend over the orbifold caps. We therefore obtain an odd spin compact $4$--orbifold
$
(\check X,\widetilde{\mathfrak{s}},\widetilde{\tau})
$
satisfying
$
\partial(\check X,\widetilde{\mathfrak{s}},\widetilde{\tau})
=
(Y,\mathfrak{s},\tau)$.

We verify the cohomological assumptions of \cref{general BF}. By \cref{prop:involution},
\[
H_1(X_0;\Q)^{-\widetilde{\tau}_*}=0,
\qquad
H_2(X_0;\Q)^{-\widetilde{\tau}_*}=0.
\]
Each orbifold cap is a rational $4$--ball and is attached along a rational homology $3$--sphere. Thus, we see
\[
H^1(\check X;\R)^{-\widetilde{\tau}^*}=0,
\qquad
H^2(\check X;\R)^{-\widetilde{\tau}^*}=0.
\]
Applying \cref{general BF} to
$(\check X,\widetilde{\mathfrak{s}},\widetilde{\tau})$, we obtain
\[
[SWF_R(Y,\mathfrak{s},\tau)]_{\mathrm{loc}}
=
\left[
\left(
\C_+^{\, -\frac12w(\check X,\widetilde{\mathfrak{s}})}
\right)^+
\right]_{\mathrm{loc}}.
\]

In the type C case, $\tau$ is the corresponding gauge transformation of the Seifert circle bundle. It extends over $X_0$ and the orbifold caps by the same gauge transformation, and the resulting action on the relevant real cohomology groups is trivial. The same argument therefore proves the result.
\end{proof}

\begin{rem}\label{rem:positive-genus-case}
The genus-zero assumption in the preceding theorem is used only to verify,
for the 4-orbifold $\check X$ constructed above, the
extension of the odd spin involution and the cohomological assumptions in
Theorem \ref{general BF}. More generally, let $(Y,\mathfrak{s},\tau)$ be a
spin Seifert $3$--manifold with an odd involution whose oriented base has
arbitrary genus with $H^1(Y;\R)^{-\tau^*}=0$.
Suppose that $\tau$ commutes with the Seifert $S^1$--action. 
Then, these are classified as type A, B, and C, which are given in \cref{section:Involution}. 
There still exists a 4-orbifold $\check X$ with
$\partial \check X=Y$, an extension
$
\widetilde{\tau}:\check X\to \check X
$
of $\tau$.  
Moreover, one can check 
\[
        H^1(\check X;\R)^{-\widetilde{\tau}^*}=0,
        \qquad
        H^2(\check X;\R)^{-\widetilde{\tau}^*}=0. 
\]
However, in general, we do not have a spin structure on $\check{X}$ extending the spin structure on the boundary.
This is the place we need to suppose there is an orbifold spin structure
$\widetilde{\mathfrak{s}}$ on $\check X$ extending $\mathfrak{s}$, such that
the chosen odd lift of $\tau$ extends to a lift of $\widetilde{\tau}$ on
$\widetilde{\mathfrak{s}}$. 
then the same proof gives
\[
[SWF_R(Y,\mathfrak{s},\tau)]_{\mathrm{loc}}
=
\left[
\left(
\C_+^{\,-\frac{1}{2}w(\check X,\widetilde{\mathfrak{s}})}
\right)^+
\right]_{\mathrm{loc}}.
\]

\end{rem}

\subsection{Calculation of the $w$--invariant}

Let $(Y,\mathfrak{s},\tau)$ be an oriented Seifert $3$--manifold with a spin structure
$\mathfrak{s}$ and an odd involution $\tau$. 
Suppose $Y$ has a description
\[
Y= M((a_1, b_1), (a_1',b_1'),\cdots, (a_n,b_n), (a_n',b_n')). 
\]

Let
$(X,\widetilde{\mathfrak{s}})$ be a compact spin $4$--orbifold
with boundary
$
\partial(X,\widetilde{\mathfrak{s}})
=
(Y,\mathfrak{s}).
$
Choose a compact smooth spin $4$--manifold $(W,\mathfrak{s}_W)$ such that
$
\partial(W,\mathfrak{s}_W)=-(Y,\mathfrak{s})
$
 We emphasize that no involution on $W$
is required. Set
$
Z_{X,W}:=X\cup_Y W.
$
We have 
\[
w(X,\widetilde{\mathfrak{s}})
= \operatorname{ind}_{\C}D_{Z_{X,W}}
+
\frac{1}{8}\sigma(W).
\]
We now take $X$ to be $\check{X} $ and state the computation of Fukumoto--Furuta's invariant for $\check{X} $: 

\begin{thm}\label{thm:explicit-w-formula}
Assume the hypotheses of \cref{thm:main}, and suppose that $
\check X=\check X(\boldsymbol{\nu},\boldsymbol{\rho})$
is constructed with \(m=0\), so that all components of
\(\partial_+X_0\) arise from full tunnels. For \(i=1,\ldots,n\), put
\[
d_i:=\gcd(a_i,a'_i),\qquad
p_i:=\frac{a_i}{d_i},\qquad
q_i:=\frac{a'_i}{d_i},\qquad
\Delta_i:=p_i b'_i+q_i b_i.
\]
Assume that \(\wt a_i\neq0\) for every \(i\), and, after reordering the indices, that \(\Delta_n\neq0\). Choose integers
\[
a_i\rho_i+b_i\nu_i=1,\qquad
a'_i\rho'_i+b'_i\nu'_i=1.
\]
As in the construction of \(\check X\), set
\[
\wt a_i:=-a_i b'_i-a'_i b_i=-d_i\Delta_i,\qquad
\wt b_i:=\rho'_i a_i-\nu'_i b_i
\quad\in\Z/\wt a_i\Z.
\]
Let \(\wt{\mathfrak{s}}\) be an orbifold spin structure on \(\check X\) extending the spin structure on \(X_0(\boldsymbol{\nu},\boldsymbol{\rho})\) determined by
\[
\lambda_i,e_i,e'_i,\mu\in\Z_2
\]
as in \cref{prop:spin structures on X_0}. Then
\[
w(\check X,\wt{\mathfrak{s}})
=
-\frac18
\left(
\sign(Q)
+
\sum_{i=1}^n\frac1{\wt a_i}
\sum_{l=1}^{|\wt a_i|-1}
\left(
\cot\frac{\pi l}{\wt a_i}
\cot\frac{\pi\wt b_i l}{\wt a_i}
+
2(-1)^{e_i l}
\csc\frac{\pi l}{\wt a_i}
\csc\frac{\pi\wt b_i l}{\wt a_i}
\right)
\right),
\]
where
\[
Q=
\left(
-\delta_{ij}\Delta_iq_i a_i
-\frac{\Delta_i\Delta_j}{\Delta_n}q_n a_n
\right)_{1\leq i,j\leq n-1}.
\]
Here \(\sign(Q)\) denotes the signature of the rational quadratic form represented by \(Q\).
\end{thm}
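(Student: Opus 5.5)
The plan is to compute $w(\check X,\wt{\mathfrak s})$ from \cref{def:real_w} with the orbifold index theorem (Kawasaki's formula for isolated cyclic singularities) on the closed spin orbifold $Z=\check X\cup_Y W$. Kawasaki's formula writes the index of the spin Dirac operator on $Z$ as a smooth part plus local contributions at the singular points:
\[
\operatorname{ind}_{\C}D_Z=-\frac{1}{8}\sigma^{\mathrm{sm}}(Z)+\sum_{x}\nu_x .
\]
Here $-\frac18\sigma^{\mathrm{sm}}(Z)=-\frac1{24}\int_Z p_1$ is the integral of the $\hat A$-form. The singular points $x$ are the cone points of the caps $C(\wt a_i,\wt b_i)$, and $\nu_x$ is the equivariant defect, an average over the nontrivial group elements $\gamma^l$ of the fixed-point contribution of $\widehat\gamma_C^{\,l}$ acting on $S^+\ominus S^-$.

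The first step is the local computation. With the generator $\gamma(z,w)=(\zeta_{\wt a}z,\zeta_{\wt a}^{\wt b}w)$ and the lift $\widehat\gamma_C$ fixed in the proof of \cref{lem:extension over orbifold caps}, the trace formula on $\C^2$ reads
\[
\frac{\operatorname{tr}_{S^+}-\operatorname{tr}_{S^-}}{\det(1-\gamma^l)}=\pm\frac{1}{4}\csc\frac{\pi l}{\wt a}\csc\frac{\pi \wt b l}{\wt a}.
\]
The sign is $\varepsilon_C^l$, and $\varepsilon_C$ is the sign determined by the spin structure. I will check that $\varepsilon_C=(-1)^{e_i}$, up to a convention, by tracking how the longitudinal spin parameter $e_i$ of \cref{prop:spin structures on X_0} restricts to the boundary lens space; this produces the factor $(-1)^{e_il}$.

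The second step removes the Pontryagin integral. On the orbifold, the $G$-signature theorem applied to each cap gives
\[
\sigma(Z)=\frac13\int_Z p_1+\sum_x\eta\text{-type defects},\qquad \text{defect at }x=-\frac1{\wt a}\sum_l\cot\frac{\pi l}{\wt a}\cot\frac{\pi\wt bl}{\wt a}.
\]
Eliminating $\int p_1$ between this and Kawasaki's formula expresses $\operatorname{ind}D_Z+\frac18\sigma(W)$ as
\[
-\frac18\bigl(\sigma(Z)-\sigma(W)\bigr)-\frac18\sum_i\frac1{\wt a_i}\sum_l\Bigl(\cot\cdot\cot+2(-1)^{e_il}\csc\cdot\csc\Bigr).
\]
The factor $2$ comes from $-\frac18\cdot 2=-\frac14$ matching the spinor defect. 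Careful bookkeeping of the orientation signs of the caps, which are attached along $-L(\wt a_i,\wt b_i)$, fixes the overall signs. By Novikov additivity, $\sigma(Z)-\sigma(W)=\sigma(\check X)$. Since each cap is a rational ball glued along a rational homology sphere, $\sigma(\check X)=\sigma(X_0)$, and this is the signature of the form $Q$ computed in \cref{prop:Q-intersection-form-on-X0} with $g=0$, rewritten with $p_i=\bar a_i$, $q_i=\bar a_i'$. That proposition computes the form only on $\partial X_0$-relative classes. Since all boundary components are rational homology spheres (using $\wt a_i\neq0$), this form is the honest rational intersection form of $\check X$.

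The main obstacle is the sign and spin bookkeeping in the first step. This means choosing the correct lift $\widehat\gamma_C$ so that it restricts to $\wt{\mathfrak s}$, identifying $\varepsilon_C$ with $(-1)^{e_i}$, and tracking the orientation reversal of the caps. I would first test the resulting formula on the simplest case $n=1$ with lens-space boundary, where $w$ is known, to pin down these conventions.
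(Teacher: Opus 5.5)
Your proposal is correct and follows essentially the same route as the paper: Kawasaki's index theorem on $Z_{\check X,W}$, with the cone-point Dirac defect carrying the sign $(-1)^{e_i l}$ through $\varepsilon_C=(-1)^{e_i}$, the orbifold signature defect used to trade $\int p_1$ for $\sigma(Z)$ (this is exactly the $\frac18\cot\cot$-type term in the paper's displayed formula), and then $\sigma(Z)-\sigma(W)=\sigma(\check X)=\sigma(X_0)=\sign(Q)$. One small correction: only the lens-space components $L(\wt a_i,\wt b_i)$ need to be rational homology spheres, whereas $Y$ itself may have $b_1(Y)=1$ (so $Q$ can be degenerate, e.g.\ $Q=(0)$ in \cref{lem:delta-values-two-bridge-torus}); this does no harm, because $\sign(Q)$ is simply the signature of a possibly degenerate form.
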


\begin{proof}
Let \(\wt{\mathfrak{s}}_0\) denote the restriction of
\(\wt{\mathfrak{s}}\) to \(X_0\), parametrized by
\[
\lambda_i,e_i,e'_i,\mu\in\Z_2
\]
as in \cref{prop:spin structures on X_0}. Recall that our convention is
\[
0=\text{the nonbounding spin structure},\qquad
1=\text{the bounding spin structure}.
\]
With respect to the product framing on the boundary circle, the
nonbounding spin structure has spin holonomy \(+1\), whereas the
bounding spin structure has spin holonomy \(-1\).  Hence the central
sign distinguishing the two lifts of the local cyclic action is
\((-1)^{e_i}\).  Therefore, the lift of the local cyclic action on the
\(i\)-th orbifold cap is
\[
\begin{aligned}
\wt\zeta_i\cdot(z,w,(q_-,q_+))
=
\biggl(
\wt\zeta_i z,\,
\wt\zeta_i^{\wt b_i}w,\,
(-1)^{e_i}
\left(
\wt\zeta_i^{(1-\wt b_i)/2},
\wt\zeta_i^{(-1-\wt b_i)/2}
\right)(q_-,q_+)
\biggr),
\end{aligned}
\]
where $
\wt\zeta_i=e^{2\pi\sqrt{-1}/\wt a_i}$.
Thus the \(l\)-th power of the isotropy generator contributes the sign $
(-1)^{e_i l}.$

Choose a compact smooth spin \(4\)--manifold \(W\) with $
\partial(W,\mathfrak{s}_W)=-(Y,\mathfrak{s})$
and set \(Z_{\check X,W}:=\check X\cup_YW\). By Kawasaki's orbifold index theorem \cite{Kawasaki81},
together with the local spin calculation as in \cite{fukumoto2001w}, we have
\[
\begin{aligned}
\operatorname{ind}_{\C}D_{Z_{\check X,W}}
={}&-\frac{\sigma(\check X)+\sigma(W)}8\\
&+\sum_{i=1}^n\frac1{\wt a_i}
\sum_{l=1}^{|\wt a_i|-1}
\left[
\frac{(-1)^{e_i l}}
{(\wt\zeta_i^{l/2}-\wt\zeta_i^{-l/2})
(\wt\zeta_i^{\wt b_i l/2}-\wt\zeta_i^{-\wt b_i l/2})}
\right. \left.
+\frac18
\frac{
(\wt\zeta_i^{l/2}+\wt\zeta_i^{-l/2})
(\wt\zeta_i^{\wt b_i l/2}+\wt\zeta_i^{-\wt b_i l/2})
}{
(\wt\zeta_i^{l/2}-\wt\zeta_i^{-l/2})
(\wt\zeta_i^{\wt b_i l/2}-\wt\zeta_i^{-\wt b_i l/2})
}
\right]\\
={}&-\frac{\sigma(W)}8-\frac18
\left(
\sigma(\check X)
+
\sum_{i=1}^n\frac1{\wt a_i}
\sum_{l=1}^{|\wt a_i|-1}
\left(
\cot\frac{\pi l}{\wt a_i}
\cot\frac{\pi\wt b_i l}{\wt a_i}
+
2(-1)^{e_i l}
\csc\frac{\pi l}{\wt a_i}
\csc\frac{\pi\wt b_i l}{\wt a_i}
\right)
\right).
\end{aligned}
\]
By definition,
\[
w(\check X,\wt{\mathfrak{s}})
=
\operatorname{ind}_{\C}D_{Z_{\check X,W}}
+\frac{\sigma(W)}8.
\]
Moreover, the orbifold caps are rational \(4\)--balls, so $\sigma(\check X)=\sigma(X_0)$.
By \cref{prop:Q-intersection-form-on-X0}, the rational intersection form of \(X_0\) is represented by \(Q\), and hence $
\sigma(\check X)=\sign(Q)$.
Substituting this equality into the preceding index formula proves the result.
\end{proof}

Following Saveliev's combinatorial interpretation \cite{Sa02} of the
\(w\)--invariant, together with the even continued-fraction computation
in \cite[Propositions~6 and~9]{fukumoto2001w}, we obtain the following
purely combinatorial form of \Cref{thm:explicit-w-formula}.

For integers \(\alpha_1,\ldots,\alpha_r\), we use the convention
\[
[\alpha_1,\ldots,\alpha_r]^-
:=
\alpha_1-\cfrac{1}{
\alpha_2-\cfrac{1}{
\ddots-\cfrac{1}{\alpha_r}}}.
\]

\begin{prop}
\label{prop:continued-fraction-w-formula}
Assume the hypotheses of \Cref{thm:explicit-w-formula}.  For each
\(i=1,\ldots,n\), put
\[
A_i:=|\wt a_i|,
\qquad
\widehat b_i:=\rho'_i a_i-\nu'_i b_i\in\Z,
\qquad
s_i:=\operatorname{sgn}(\wt a_i).
\]

Suppose first that \(A_i>1\).  There is a unique integer \(c_i\) modulo
\(2\) satisfying
\[
c_i\equiv e_i+1\pmod2
\]
and a unique representative $
B_i
:=
s_i\bigl(\widehat b_i+c_i\wt a_i\bigr)$
satisfying $
0<|B_i|<A_i.$
Moreover,
\[
\gcd(A_i,B_i)=1,
\qquad
A_i+B_i\equiv1\pmod2.
\]
Consequently, there is a unique continued-fraction expansion
\[
\frac{A_i}{B_i}
=
[\alpha_{i,1},\ldots,\alpha_{i,r_i}]^-
\]
such that $
\alpha_{i,j}\in2\Z, 
|\alpha_{i,j}|\geq2$
for every \(j\).

If \(A_i=1\), set \(r_i=0\), so that the corresponding sum below is
empty.  Then
\[
w(\check X,\wt{\mathfrak s})
=
\frac18
\left(
\sum_{i=1}^n\sum_{j=1}^{r_i}
\operatorname{sgn}(\alpha_{i,j})
-
\sign(Q)
\right).
\]

\end{prop}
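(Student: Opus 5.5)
The plan is to start from \cref{thm:explicit-w-formula}, which already writes $w(\check X,\wt{\mathfrak s})$ as $-\frac18\bigl(\sign(Q)+\sum_i \Phi_i\bigr)$. Here
\[
\Phi_i:=\frac1{\wt a_i}\sum_{l=1}^{A_i-1}\Bigl(\cot\tfrac{\pi l}{\wt a_i}\cot\tfrac{\pi\wt b_i l}{\wt a_i}+2(-1)^{e_il}\csc\tfrac{\pi l}{\wt a_i}\csc\tfrac{\pi\wt b_i l}{\wt a_i}\Bigr).
\]
Since the $\sign(Q)$ term already matches, it suffices to prove, cap by cap, that $\Phi_i=-\sum_{j=1}^{r_i}\operatorname{sgn}(\alpha_{i,j})$. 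When $A_i=1$ the sum over $l$ is empty and $r_i=0$, so that case is immediate.

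The first step is to normalize each cap. The local isotropy generator, together with its spin lift, is $\wt\zeta_i\cdot(z,w)=(\wt\zeta_i z,\wt\zeta_i^{\wt b_i}w)$ with sign $(-1)^{e_i}$. The sign $(-1)^{e_il}$ can be absorbed by replacing the exponent $\wt b_i$ with $\wt b_i+c\,\wt a_i$ for suitable odd or even $c$. Since $\wt\zeta_i^{\wt b_il/2}$ changes by $(-1)^{cl}$, both $\csc\frac{\pi\wt b_il}{\wt a_i}$ and the half-power spin lift pick up $(-1)^{cl}$, while the cotangent term is unchanged. Choosing $c_i\equiv e_i+1$ therefore turns the sum into the form $\frac1A\sum_l(\cot\cot+2\csc\csc)$ with no sign twist, evaluated at an integer lift $B_i$. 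Replacing $\wt a_i$ by $A_i=s_i\wt a_i$ and multiplying $B_i$ by $s_i$ keeps the summand consistent, because each trig factor is odd. The parity statement $A_i+B_i\equiv1$ will be checked directly: if $A_i$ is even then $\widehat b_i$ is odd and $c_i\wt a_i$ is even, and if $A_i$ is odd we pick the residue of the right parity inside $(-A_i,A_i)$. This is exactly what the choice of $c_i$ modulo $2$ arranges, and it also explains why the representative with $0<|B_i|<A_i$ is unique. The condition $\gcd(A_i,B_i)=1$ is inherited from $\gcd(\wt a_i,\wt b_i)=1$.

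The second step, which I expect to be the main obstacle, is the identity
\[
\frac1A\sum_{l=1}^{A-1}\Bigl(\cot\tfrac{\pi l}{A}\cot\tfrac{\pi Bl}{A}+2\csc\tfrac{\pi l}{A}\csc\tfrac{\pi Bl}{A}\Bigr)=-\sum_j\operatorname{sgn}(\alpha_j)
\]
for $A/B=[\alpha_1,\dots,\alpha_r]^-$ with all $\alpha_j$ even. Both $A$ and $B$ are free here subject to $\gcd(A,B)=1$ and $A+B$ odd. I would prove it by induction on the length $r$ of the even expansion, using the reciprocity laws for the Dedekind sum $s(B,A)$ and for the twisted cosecant sum; this is the even-continued-fraction argument of \cite[Propositions~6 and~9]{fukumoto2001w}. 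Concretely, write $A/B=\alpha_1-B'/B$. The cotangent part is $4A\,s(B,A)$, and Dedekind reciprocity moves it to $s(B',B)$ plus the correction terms $\frac{A}{B}+\frac{B}{A}+\frac1{AB}-3\operatorname{sgn}$. Periodicity takes care of the shift by $\alpha_1$. The cosecant sum satisfies an analogous reciprocity, and it is here that the evenness of $\alpha_1$ is used, since it preserves the parity condition $A+B$ odd. Adding the two corrections should leave exactly $-\operatorname{sgn}(\alpha_1)$. Alternatively, I can cite Saveliev's interpretation \cite{Sa02}: the quantity is the $\bar\mu$-type contribution of the lens-space cap, and it equals the signature of the even linear plumbing with weights $\alpha_j$ bounding the lens space, because $8\times$ the spin eta-invariant defect equals minus the signature for spin plumbings. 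The remaining care is in sign and orientation conventions. I would fix these by checking the base case $r=1$, where $A/B=\alpha_1$, $B=\pm1$ and $A=|\alpha_1|$, directly.

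Finally, summing over $i$ and substituting into \cref{thm:explicit-w-formula} gives $w=\frac18\bigl(\sum_{i,j}\operatorname{sgn}(\alpha_{i,j})-\sign(Q)\bigr)$.
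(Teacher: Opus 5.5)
Your route matches the paper's: reduce via \cref{thm:explicit-w-formula} to a per-cap identity, normalize by $\widehat b_i\mapsto\widehat b_i+c_i\wt a_i$ and $\wt a_i\mapsto A_i$, then use the even continued-fraction evaluation of \cite{fukumoto2001w}. However, your normalization step has a sign error that makes your key identity false.

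Replacing $\widehat b_i$ by $\widehat b_i+c\,\wt a_i$ leaves the cotangent factor unchanged and multiplies $\csc\frac{\pi\widehat b_il}{\wt a_i}$ by $(-1)^{cl}$. So the twist $(-1)^{e_il}$ becomes $(-1)^{(e_i+c)l}$, and it disappears only when $c\equiv e_i$. With the prescribed $c_i\equiv e_i+1$ you land on the twisted sum
\[
\frac1{A_i}\sum_{l=1}^{A_i-1}\Bigl(\cot\tfrac{\pi l}{A_i}\cot\tfrac{\pi B_il}{A_i}+2(-1)^{l}\csc\tfrac{\pi l}{A_i}\csc\tfrac{\pi B_il}{A_i}\Bigr),
\]
which is the paper's $\sigma(B_i,A_i,-1)$. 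It is this sum that equals $-\sum_j\operatorname{sgn}(\alpha_{i,j})$. The untwisted identity you display already fails in your proposed base case. Take $B=1$ and $A$ even, so $A/B=[A]^-$. The untwisted sum equals $A-1$ (e.g.\ $1$ for $A=2$), whereas the right-hand side is $-1$. The twisted sum gives $\frac1A\bigl(\frac{(A-1)(A-2)}{3}-\frac{A^2+2}{3}\bigr)=-1$, as it should. So the induction, and in particular the cosecant reciprocity you plan to use, must be set up for the $(-1)^l$-twisted cosecant sum.

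Second, your parity argument for odd $A_i$, which chooses the residue of the right parity, treats $c_i$ as free. In fact its parity is fixed by $e_i$, and for odd $A_i$ this forces $B_i\equiv\widehat b_i+c_i\equiv\widehat b_i+e_i+1\pmod 2$. For this to be even you need $e_i\equiv\widehat b_i+1\pmod 2$. That relation is exactly the condition that the lifted isotropy generator $(-1)^{e_i}\bigl(\wt\zeta_i^{(1-\widehat b_i)/2},\wt\zeta_i^{-(1+\widehat b_i)/2}\bigr)$ has order $A_i$, i.e.\ that the spin structure extends over the cap; the paper invokes it explicitly. The two problems are linked: the choice $c\equiv e_i$ that would genuinely remove the twist makes $A_i+B_i$ even when $A_i$ is odd, so no even expansion would exist. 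With both corrections, your argument becomes the paper's proof.
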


\begin{proof}
For relatively prime integers \(p,q\), with \(p\neq0\), and
\(\varepsilon\in\{\pm1\}\), define
\[
\sigma(q,p,\varepsilon)
:=
\frac1p
\sum_{\ell=1}^{|p|-1}
\left(
\cot\frac{\pi\ell}{p}
\cot\frac{\pi q\ell}{p}
+
2\varepsilon^\ell
\csc\frac{\pi\ell}{p}
\csc\frac{\pi q\ell}{p}
\right).
\]
Thus \Cref{thm:explicit-w-formula} can be written as
\[
w(\check X,\wt{\mathfrak s})
=
-\frac18
\left(
\sign(Q)
+
\sum_{i=1}^n
\sigma(\widehat b_i,\wt a_i,(-1)^{e_i})
\right).
\tag{\(\ast\)}
\]

The elementary transformation rules
\[
\sigma(q+cp,p,\varepsilon)
=
\sigma(q,p,(-1)^c\varepsilon)
\]
and
\[
\sigma(-q,p,\varepsilon)
=
\sigma(q,-p,\varepsilon)
=
-\sigma(q,p,\varepsilon)
\]
imply
\[
\begin{aligned}
\sigma(\widehat b_i,\wt a_i,(-1)^{e_i})
&=
\sigma(s_i\widehat b_i,A_i,(-1)^{e_i})\\
&=
\sigma(B_i,A_i,-1),
\end{aligned}
\tag{\(\dagger\)}
\]
because $c_i\equiv e_i+1\pmod2$.

We verify the parity condition needed for the even continued fraction.
If \(A_i\) is even, then \(B_i\) is odd since
\(\gcd(A_i,B_i)=1\).  If \(A_i\) is odd, the condition that the local
spin structure extend over the cone gives
$
e_i\equiv s_i\widehat b_i-1\pmod2$.
Since \(c_i\equiv e_i+1\pmod2\), it follows that
$
B_i
\equiv
s_i\widehat b_i+c_i
\equiv0
\pmod2.$ 
Thus, in both cases,
$ 
A_i+B_i\equiv1\pmod2.$

The even Euclidean algorithm therefore gives a unique expansion
\[
\frac{A_i}{B_i}
=
[\alpha_{i,1},\ldots,\alpha_{i,r_i}]^-,
\qquad
\alpha_{i,j}\in2\Z,
\quad
|\alpha_{i,j}|\geq2.
\]
By the continued-fraction formula for the local correction term,
\[
\sigma(B_i,A_i,-1)
=
-\sum_{j=1}^{r_i}
\operatorname{sgn}(\alpha_{i,j}).
\tag{\(\ddagger\)}
\]
Substituting \((\dagger)\) and \((\ddagger)\) into \((\ast)\) gives
\[
w(\check X,\wt{\mathfrak s})
=
\frac18
\left(
\sum_{i=1}^n\sum_{j=1}^{r_i}
\operatorname{sgn}(\alpha_{i,j})
-
\sign(Q)
\right).
\]
\end{proof}

\begin{rem}\label{prop:w-formula-half-tunnels}
The formulas of \Cref{prop:continued-fraction-w-formula} remain valid in the presence of
half tunnels, provided that each half tunnel with Seifert datum
$(a',b')$ is formally regarded as a full tunnel whose missing end has
data $
(a,b,\nu,\rho)=(1,0,0,1).$
Thus, for such a half tunnel,
$
d=1, p=1,q=a',
\Delta=b', 
\widetilde a=-b',
\widetilde b=\rho'$.
\end{rem}

\subsection{Application}

We give a proof of \cref{periodic}. 

\begin{proof}[Proof of \Cref{periodic}]
Set $
K:=T(p,q)$ and $
Y:=S^3_0(K)$.
Let $
Y_j\longrightarrow Y$
denote the connected cyclic cover of degree $2^j$, with $Y_0=Y$.

The cyclic-cover version of the zero-surgery trace argument in
\cite[proof of Theorem~4.10]{MPT25} gives
\begin{equation}\label{eq:knot-zero-surgery-local-equivalence}
[SWF_R^{(j)}(K)]_{\mathrm{loc}}
=
[SWF_R^{(j)}(Y)]_{\mathrm{loc}}
\end{equation}
for every $j\geq1$. Here one applies the same argument to the
$2^j$-fold cyclic branched cover of the zero-surgery trace.

By definition, the double cover $
Y_j\longrightarrow Y_{j-1}$
is the cover used to define the first real Floer homotopy type of
$Y_{j-1}$. Hence
\[
[SWF_R^{(j)}(Y)]_{\mathrm{loc}}
=
[SWF_R^{(1)}(Y_{j-1})]_{\mathrm{loc}}.
\]

We next recall the surface-bundle description of $Y$. The torus knot
$T(p,q)$ is the link of the isolated plane-curve singularity $
z_1^p+z_2^q=0$.
By the Milnor fibration theorem, it is a fibered knot with fiber
$\Sigma_{g,1}$, where
\[
g=\frac{(p-1)(q-1)}{2};
\]
see \cite[Sections~4 and~9]{Milnor68}. Let $
\psi\colon\Sigma_{g,1}\longrightarrow\Sigma_{g,1}$
be its monodromy, and let $
\bar\psi\colon\Sigma_g\longrightarrow\Sigma_g$
be the closed monodromy obtained by capping the boundary component.

Since the boundary of a fiber is the preferred longitude of the knot,
zero-framed Dehn filling caps every fiber by a disk. Consequently, $
Y\cong M_{\bar\psi}$,
where $M_f$ denotes the mapping torus of $f$; see also
\cite[Chapter~10]{Rolfsen03}. For the weighted homogeneous polynomial $z_1^p+z_2^q$, the geometric
monodromy is represented by
\[
h(z_1,z_2)
=
\left(
e^{2\pi i/p}z_1,
e^{2\pi i/q}z_2
\right);
\]
see \cite[Section~9]{Milnor68}. Its action on the boundary of the
Milnor fiber is a rotation and therefore extends over the capping disk.
Thus the closed monodromy $\bar\psi$ may be chosen to be periodic.
Since $p$ and $q$ are relatively prime, this periodic map has exact
order
\[
\operatorname{ord}(\bar\psi)=\operatorname{lcm}(p,q)=pq.
\]

The fibration $Y\to S^1$ induces an isomorphism on the free part of
$H_1(Y;\mathbb Z)$. Hence the connected cyclic cover of degree $2^j$
is obtained by pulling back the fibration along the degree-$2^j$ cover
of $S^1$. Therefore $
Y_j\cong M_{\bar\psi^{\,2^j}}$,
and in particular $
Y_{k-1}\cong M_{\bar\psi^{\,2^{k-1}}}$.
Write $
pq=2^{a_{p,q}}m_{p,q},
\ 
m_{p,q}\ \text{odd}$.
Let
\[
d_{p,q}:=\min\{d\geq1\mid 2^d\equiv1\pmod{m_{p,q}}\}.
\]
If $k\geq a_{p,q}+1$, then
\[
2^{k+d_{p,q}-1}-2^{k-1}
=
2^{k-1}(2^{d_{p,q}}-1)
\]
is divisible by $pq$. Since $\bar\psi$ has order $pq$, it follows that $
\bar\psi^{\,2^{k+d_{p,q}-1}}
=
\bar\psi^{\,2^{k-1}}$.
This implies $
Y_{k+d_{p,q}-1}\cong Y_{k-1}$.

We therefore obtain
\[
\begin{aligned}
[SWF_R^{(k+d_{p,q})}(Y)]_{\mathrm{loc}}=
[SWF_R^{(1)}(Y_{k+d_{p,q}-1})]_{\mathrm{loc}}=
[SWF_R^{(1)}(Y_{k-1})]_{\mathrm{loc}}=
[SWF_R^{(k)}(Y)]_{\mathrm{loc}}.
\end{aligned}
\]
Combining this with
\eqref{eq:knot-zero-surgery-local-equivalence}, we conclude that
\[
[SWF_R^{(k+d_{p,q})}(K)]_{\mathrm{loc}}
=
[SWF_R^{(k)}(K)]_{\mathrm{loc}}.
\]

The assertions for
$\delta_R^{(k)}$,
$\underline{\delta}_R^{(k)}$,
$\overline{\delta}_R^{(k)}$, and
$\kappa_R^{(k)}$
follow immediately. 
\end{proof}

\subsection{Linear independence} 

\begin{lem}\label{lem:delta-values-two-bridge-torus}
For \(n\geq2\), set $
K_n:=T(2,2^n-1)$.
Then
\[
\delta^{(k)}_R(K_n)
=\begin{cases}
\frac{2^{n-1}-1}{8} \qquad k=1 \\ 
-\frac18
\qquad
(2\leq k\leq n-1) \\ 
\frac{2^n-1}{8} \qquad k=n. 
\end{cases} 
\]
\end{lem}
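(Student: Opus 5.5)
We plan to combine \Cref{torus_computation}, which gives $\delta_R^{(k)}(K_n)=w_R(\Sigma(2^k,2,q),\mathfrak s_k,\tau_k)=\tfrac12 w(\check X,\wt{\mathfrak s})$ with $q:=2^n-1$, with the combinatorial formula of \Cref{prop:continued-fraction-w-formula}, extended by \cref{prop:w-formula-half-tunnels} to allow half tunnels. So the proof reduces to three tasks. First, write down an explicit symmetric Seifert presentation of $\Sigma(2^k,2,q)$ together with $\tau_k$. Second, build the corresponding $\check X$ from \cref{Prop:extension} with explicit gluing data. Third, evaluate the even continued fractions and $\sign(Q)$ in each of the three ranges $k=1$, $2\le k\le n-1$ and $k=n$.

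\textbf{Seifert data.} Since $\gcd(2^k,2)=2$ and $\gcd(2^k,q)=\gcd(2,q)=1$, the Brieskorn manifold $\Sigma(2^k,2,q)$ fibers over $S^2$. It has two exceptional fibers of multiplicity $q$, coming from the pair $(2^k,2)$, and one fiber of multiplicity $2^{k-1}$, coming from the pair $(2,q)$. The involution $\tau_k=g_k^{2^{k-1}}$ commutes with the $S^1$-action. It interchanges the two $q$-fibers and fixes the fiber of order $2^{k-1}$ and one regular fiber, so it is of type A with fixed orders $(d_0,d_1)=(2^{k-1},1)$. We will compute the Seifert invariants $b_i$ from the standard formula for Brieskorn manifolds, normalizing by the Euler number $e=-q\cdot 2^k/(2^k\cdot 2\cdot q)$ up to sign, and then perform the shift $b''_j\mapsto b''_j+t_ja''_j$ from the proof of \cref{Prop:extension}. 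This choice fixes the spin parameters $\lambda''_j=1$ and hence the values $e_i$.

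\textbf{Evaluating the formula.} The construction then has one full tunnel, joining the two $q$-fibers, and two half tunnels. By \cref{prop:w-formula-half-tunnels}, the half tunnels contribute caps with $\wt a=-b'$ and $\wt b=\rho'$, while the full tunnel contributes $\wt a=-2qb$. The form $Q$ of \Cref{thm:explicit-w-formula} is then only a $2\times2$ matrix, and its signature can be read off from the signs of the $\Delta_i$. For each cap we run the even Euclidean algorithm on $A_i/B_i$ and sum the signs of the partial quotients.

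\textbf{The three ranges.} For $k=1$ the manifold is the lens space $\Sigma(2,2,q)$, with $L(q,\cdot)$-type data. Here the even continued fraction of a fraction like $q/(q\pm1)$ has about $(q-1)/2=2^{n-1}-1$ terms of the same sign, which accounts for the value $(2^{n-1}-1)/8$. For $2\le k\le n-1$ we expect the caps to be small and the contributions to cancel down to a constant. For $k=n$ the relation $2^n\equiv1\pmod q$ forces one of the $\wt b_i$ to be $\pm1$ modulo $\wt a_i$. The corresponding cap then has a continued fraction of length about $q-1$, which produces the extra contribution of order $q/8$ in the value $(2^n-1)/8$. As an independent check of the middle and last cases, we may compare with the periodicity in \Cref{periodic}, where $a_{2,q}=1$ and $d_{2,q}=n$.

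\textbf{Main obstacle.} The hardest step will be the uniform-in-$k$ bookkeeping. One must fix the signs and residues of $\wt b_i$ and the parities $e_i$ correctly, and then evaluate the even continued fractions in closed form. An off-by-one or sign error in $B_i$ changes the answer by $\pm\tfrac18$ per term. To guard against this, we would first check small cases such as $n=2,3$ by computing the Dedekind-type sums in \Cref{thm:explicit-w-formula} directly, and then prove the general pattern by induction on $k$ through the recursion on the continued fractions.
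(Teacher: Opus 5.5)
Your overall route, applying \Cref{torus_computation} to $\Sigma(2^k,2,q)$ and then evaluating \Cref{prop:continued-fraction-w-formula}, is admissible in principle. However, the geometric input is wrong for $k\ge2$. The Seifert action is $t\cdot(z_1,z_2,z_3)=(t^{q}z_1,t^{2^{k-1}q}z_2,t^{2^k}z_3)$. The deck involution $\tau_k(z_1,z_2,z_3)=(-z_1,z_2,z_3)$ is exactly the action of $t=-1$, because $q$ is odd while $2^{k-1}q$ and $2^k$ are even. So $\tau_k$ acts trivially on the base and preserves each of the two $q$--fibers $\{z_3=0,\ z_2=\pm i z_1^{2^{k-1}}\}$. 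It is the $(-1)$--gauge transformation, i.e.\ type C, which is also what the paper finds from $\overline{\psi}^{\,v/2}\in\langle\overline{\psi}^{\,v}\rangle$. Only $k=1$, where $\Sigma(2,2,q)=L(q,1)$ and the two $q$--fibers are swapped, is of type A. Hence \cref{Prop:extension} and the spin bookkeeping you plan to import from it are about an involution that is not $\tau_k$; this includes the shift $t_j=\varepsilon_\lambda\mu$ and the oddness condition $\lambda''_j+\varepsilon_\lambda\mu\equiv1$. This is repairable. A type C involution extends over every $\check X$ of the construction, acts trivially on cohomology, and its lift is odd exactly when $\mu=1$. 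So you only need the unique spin structure to extend over your chosen $\check X$, and the tunnel pattern can then be chosen freely. Your Euler number is also wrong: $|e(\Sigma(2^k,2,q))|=1/(2^{k-1}q)$, not $\tfrac12$, consistent with $|H_1|=q$.

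The more serious gap is that the three values, which are the entire content of the lemma, are never computed. Saying ``we expect the caps to be small and the contributions to cancel'' is not an argument. Your heuristics are also off by a factor of two, since $\delta_R^{(k)}=w_R=\tfrac12 w=\tfrac1{16}\bigl(\sum\operatorname{sgn}\alpha_{i,j}-\sign Q\bigr)$. Thus $(2^{n-1}-1)/8$ needs a total of $q-1$, not $(q-1)/2$; note that $q/(q-1)=[2,\ldots,2]^-$ already has $q-1$ entries. Likewise $(2^n-1)/8$ needs a total of $2q$, not about $q-1$. The periodicity check is vacuous, because \cref{periodic} only identifies $k$ with $k+n$ for $k\ge2$ and says nothing about $2\le k\le n$.

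For comparison, the paper handles $k=1$ by citing $\delta_R^{(1)}(K)=-\sigma(K)/16$ together with $\sigma(T(2,r))=-(r-1)$. For $k\ge2$ it uses the zero-surgery comparison to replace the branched cover by the cyclic cover $Y_k\cong M_{\overline{\psi}^{\,2^k}}\cong M(0;(r,-u),(r,-u),(r,2u))$, with $u=2^{n-k}$, $v=2^k$ and $uv=r+1$. It then adds a regular fiber $(1,1)$ and regroups into two full tunnels with explicit gluing data, so that $Q=(0)$. For $2\le k\le n-1$ the caps give $[-u,-2v,-u]^-$ and $[2u]^-$, with total $-2$. For $k=n$ they give $[2,\ldots,2,4,2,\ldots,2]^-$ of length $2r-1$ together with $[-2,2^{n-1},2]^-$, with total $2r$. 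Whichever manifold you use, you must produce analogous explicit presentations, gluing data and expansions. The congruence $uv\equiv1\pmod r$ is what makes these close up.
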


\begin{proof}
Put $
r:=2^n-1, 
Y:=S^3_0(K_n)$,
and let $
Y_k\longrightarrow Y$
be the connected cyclic cover of degree \(2^k\).  Denote by
\(\tau_k\) the order-two deck transformation of
\(Y_k\to Y_{k-1}\), and by \(\mathfrak{s}_k\) the distinguished spin
structure used in the definition of the real invariants.  By the
zero-surgery comparison and \Cref{thm:Quantities},$ 
\delta_R^{(k)}(K_n)
=
\delta_R(Y_k,\mathfrak{s}_k,\tau_k)$. 

We have 
\[
Y
=
S^3_0(T(2,r))
\cong
M\left(
0;
(2,1),
\left(r,\frac{1-r}{2}\right),
(2r,-1)
\right).
\]
from \cite[Proposition 11.3.11]{MartelliGT}. 

We next determine the Seifert presentation of its cyclic covers.
Let \(F_r\) be the closed fiber of \(T(2,r)\).  It can be identified
with the compactification of
\[
F_r=\{(x,y)\in\mathbb C^2\mid y^2=1-x^r\}.
\]
Writing \(\zeta_r=e^{2\pi i/r}\), the closed monodromy is $
\overline{\psi}(x,y)
=
(\zeta_r x,-y)$,
which has order \(2r\).  Since zero-framed surgery caps off the boundary
of every page, $
Y\cong M_{\overline{\psi}}$ which implies  $ 
Y_k\cong M_{\overline{\psi}^{\,2^k}}$.

Fix \(2\leq k\leq n\), and put $
u:=2^{n-k}, 
v:=2^k$.
Then $
uv=2^n=r+1, 
uv\equiv1\pmod r.
$ 
Since \(v\) is even, $
\overline{\psi}^{\,v}(x,y)
=
(\zeta_r^v x,y)$.
This periodic map has order \(r\).  Its quotient is a sphere, and it
has three fixed points: the two points $
(0,1), 
(0,-1)$,
and the point at infinity.  The local rotation numbers at these points
are $
v, v, -\frac v2 \in \mathbb Z/r\mathbb Z$,
where division by \(2\) is taken in \(\mathbb Z/r\mathbb Z\).  Indeed,
\(x\) is a local coordinate at the first two points, while at infinity
one may use a local coordinate \(t\) satisfying \(x=t^{-2}\).

With the convention for Seifert invariants used in this paper, a local
rotation number \(q\) contributes a pair \((r,b)\) satisfying $ 
qb\equiv-1\pmod r$.
Since \(uv\equiv1\pmod r\), we may therefore take
$
b_1=-u,
b_2=-u, 
b_3=2u.$ 
The rational Euler number is zero, and hence
\[
Y_k
\cong
M\bigl(
0;
(r,-u),
(r,-u),
(r,2u)
\bigr).
\tag{1}
\]

We also identify the involution.  The action induced by \(\tau_k\) on
the base of the Seifert fibration is represented by
\(\overline{\psi}^{\,v/2}\).  Since
$\left\langle\overline{\psi}^{\,v}\right\rangle
=
\left\langle\overline{\psi}^{\,2}\right\rangle$
and \(v/2\) is even for \(k\geq2\), we have
$
\overline{\psi}^{\,v/2}
\in
\left\langle\overline{\psi}^{\,v}\right\rangle.$ 
Thus \(\tau_k\) acts trivially on the Seifert base.  It is the
nontrivial fiberwise gauge transformation and is therefore of type C.
In particular, the pairs of Seifert entries used below are choices in
the full-tunnel construction; they are not pairs exchanged by
\(\tau_k\).

The distinguished spin structure \(\mathfrak{s}_k\) does not descend
through $
Y_k\longrightarrow Y_{k-1}$.
Since \(\tau_k\) is the fiberwise half-rotation, this is the spin
structure whose restriction to a regular Seifert fiber is bounding.
Thus its regular-fiber parameter is $ 
\mu=1$. 

For \(k=1\), we use 
$\delta_R^{(1)}(K)
=
-\frac1{16}\sigma(K)$ proven in \cite{KMT:2023}.
Since $
\sigma(T(2,r))=-(r-1)$,
we obtain
$
\delta_R^{(1)}(K_n)
=
\frac{r-1}{16}
=
\frac{2^{n-1}-1}{8}$. 

We now suppose that \(2\leq k\leq n-1\).  In this case \(u\) and \(v\)
are even.  Starting from (1), apply standard changes of Seifert
invariants and add one regular fiber to obtain
\[
Y_k
\cong
M\bigl(
0;
(r,-r-u),
(r,r-u),
(r,2u-r),
(1,1)
\bigr).
\tag{2}
\]

For the construction of our spin 4-orbifold \(\check X_k\), group the first two entries
and the last two entries in (2) into two full-tunnel pairs.  All four
multiplicities and coefficients in (2) are odd.  Since \(\mu=1\), the
meridional compatibility conditions force $ 
\lambda_1=\lambda_2=\lambda_3=\lambda_4=0$.
Thus the distinguished spin structure extends over the corresponding
\(X_0\).

For the first pair, choose the gluing data $ 
(\rho_1,\nu_1)=(-v-1,-v),
(\rho'_1,\nu'_1)=(v-1,-v)$.
For the second pair, choose
$
(\rho_2,\nu_2)
=
\left(\frac v2-1,\frac v2\right),
(\rho'_2,\nu'_2)=(1,0) $.  Since \(v\) is divisible by \(4\), the spin parameters are $
e_1=e'_1=e_2=e'_2=1.$ 

For the first tunnel, we have
\begin{align*}
&\Delta_1
=
(r-u)+(-r-u)
=
-2u, \ 
\widetilde a_1
=
-r(r-u)-r(-r-u)
=
2ru, \\
&
\widehat b_1
=
(v-1)r-(-v)(-r-u)
=
-(2r+1).
\end{align*}
For the second tunnel,
\[
\Delta_2
=
r+(2u-r)
=
2u,\ 
\widetilde a_2
=
-r-(2u-r)
=
-2u, \text{ and }
\widehat b_2=r.
\]
The normalized pairs in
\Cref{prop:continued-fraction-w-formula} are therefore
\[
(A_1,B_1)
=
\bigl(2ru,-(2r+1)\bigr),
\qquad
(A_2,B_2)
=
(2u,1).
\]

The intersection matrix is
\[
Q
=
\left(
-\Delta_1r
-
\frac{\Delta_1^2}{\Delta_2}r
\right)
=
\left(
2ur-2ur
\right)
=
(0). 
\]

Using \(r=uv-1\), we obtain
\[
\frac{A_1}{B_1}
=
\frac{2ru}{-(2r+1)}
=
[-u,-2v,-u]^-,
\]
because
\[
[-u,-2v,-u]^-
=
-\frac{2u(uv-1)}{2uv-1}
=
-\frac{2ur}{2r+1}.
\]
Moreover, $\frac{A_2}{B_2}
=
2u
=
[2u]^-$.
Hence $
\sum_j\operatorname{sgn}(\alpha_{1,j})=-3,
\sum_j\operatorname{sgn}(\alpha_{2,j})=1$.
By \Cref{thm:Quantities,prop:continued-fraction-w-formula},
$\delta_R^{(k)}(K_n)
=
-\frac18$
for every \(2\leq k\leq n-1\).

It remains to consider \(k=n\).  In this case \(u=1\) and
\(v=r+1=2^n\).  Starting from
\[
Y_n
\cong
M\bigl(
0;
(r,-1),
(r,-1),
(r,2)
\bigr),
\]
we use the equivalent presentation
\[
Y_n
\cong
M\bigl(
0;
(r,-2r-1),
(r,-1),
(r,r+2),
(1,1)
\bigr).
\tag{3}
\]
Here the changes in the rational Seifert coefficients are $
-2, 0, 1, 1$,
whose sum is zero.

Again, we group the first two entries and the last two entries in (3)
into two full-tunnel pairs.  The distinguished spin structure is
represented by $
\mu=1,
\lambda_1=\lambda_2=\lambda_3=\lambda_4=0$. 

For the first pair, choose the gluing data
$
(\rho_1,\nu_1)=(-2,-1),
(\rho'_1,\nu'_1)=(0,-1),$ 
and for the second pair choose $
(\rho_2,\nu_2)
=
\left(
-\frac{r+3}{2},
\frac{r+1}{2}
\right), 
(\rho'_2,\nu'_2)=(1,0)$.
The associated spin parameters are $ 
e_1=e'_1=0,
e_2=e'_2=1$.

For the first tunnel,
\[
\Delta_1=-2(r+1),
\qquad
\widetilde a_1=2r(r+1),
\qquad
\widehat b_1=-(2r+1).
\]
Since \(e_1=0\), the normalization requires adding one copy of
\(\widetilde a_1\), and hence
\[
(A_1,B_1)
=
\bigl(
2r(r+1),\,2r^2-1
\bigr).
\]
For the second tunnel,
\[
\Delta_2=2(r+1),
\qquad
\widetilde a_2=-2(r+1),
\qquad
\widehat b_2=r,
\]
and therefore
\[
(A_2,B_2)
=
\bigl(
2(r+1),-r
\bigr).
\]
As before,
\[
Q
=
\left(
-\Delta_1r
-
\frac{\Delta_1^2}{\Delta_2}r
\right)
=
(0).
\]

We use the identities
\[
[\underbrace{2,\ldots,2}_{s}]^-
=
\frac{s+1}{s}
 \text{ and }
[
\underbrace{2,\ldots,2}_{s},
4,
\underbrace{2,\ldots,2}_{s}
]^-
=
\frac{2(s+1)(s+2)}{2s^2+4s+1}.
\]
Taking \(s=r-1\), we obtain
\[
\frac{A_1}{B_1}
=
[
\underbrace{2,\ldots,2}_{r-1},
4,
\underbrace{2,\ldots,2}_{r-1}
]^-.
\]
This expansion has \(2r-1\) positive coefficients.  Moreover,
\[
\frac{A_2}{B_2}
=
\frac{2(r+1)}{-r}
=
\left[
-2,\frac{r+1}{2},2
\right]^-.
\]
Since $ 
\frac{r+1}{2}=2^{n-1}$
is a positive even integer, the sign sum of the second expansion is $
-1+1+1=1$.
Consequently, again from \Cref{thm:Quantities,prop:continued-fraction-w-formula},  we get $  
\delta_R^{(n)}(K_n)
= 
\frac r8
=
\frac{2^n-1}{8}$.
This completes the proof.
\end{proof}

\begin{proof}[Proof of \cref{thm:integral-linear-independence-delta}]
Set $\overline{\delta}^{(k)}:=8\delta^{(k)}$. For $n\geq2$, let $K_n:=T(2,2^n-1)$ and $D_n:=K_n\mathbin{\#}-K_{n+1}$. By \Cref{lem:delta-values-two-bridge-torus},
\[
\overline{\delta}^{(1)}(D_n)=-2^{n-1},\qquad \overline{\delta}^{(k)}(D_n)=0\quad(2\leq k<n),\qquad \overline{\delta}^{(n)}(D_n)=2^n.
\]
Suppose that
$
\sum_{k=1}^N c_k\overline{\delta}^{(k)}=0,\  c_k\in\mathbb Z$.
Evaluating on $D_{N+1}$ gives $-2^Nc_1=0$, so $c_1=0$. We now proceed by descending induction. If $c_{j+1}=\cdots=c_N=0$ for some $2\leq j\leq N$, then evaluation on $D_j$ gives
$
0=c_j\overline{\delta}^{(j)}(D_j)=2^jc_j$,
because $\overline{\delta}^{(k)}(D_j)=0$ for $2\leq k<j$. Hence $c_j=0$. Therefore $c_1=\cdots=c_N=0$, proving the assertion.
\end{proof}

\appendix

\section{Foundations of orbifolds} \label{Apx:orbifold}

\subsection{Fundamental definitions}

We briefly recall the basic notions of orbifolds, orbifold bundles, and
orbifold spin structures used in this paper. For general background on
orbifolds, see, for example, \cite{Satake,AdemLeidaRuan}; for the
gauge-theoretic setting of spin orbifolds used here, see also
\cite{FF00,fukumoto2001w}.

A smooth \(n\)-orbifold \(X\) consists of a Hausdorff second countable
topological space \(|X|\), together with an orbifold atlas as follows.
For every open set \(U\subset |X|\) in the atlas, there is a triple $
        (\widetilde U,G_{\widetilde U},\phi_U)$,
where \(\widetilde U\subset \R^n\) is an open set, \(G_{\widetilde U}\) is a
finite group acting smoothly and effectively on \(\widetilde U\) from left, and $
        \phi_U:\widetilde U\to U$
is a continuous map inducing a homeomorphism $
        \widetilde U/G_{\widetilde U}\cong U.$  Compatiblity of charts is described as follows: if
\(U\subset U'\), then, after possibly shrinking \(U\), there is a smooth
embedding $
        \lambda_U:\widetilde U\hookrightarrow \widetilde U'$
such that $
        \phi_{U'}\circ \lambda_U=\phi_U.$
Moreover, for each \(g\in G_{\widetilde U}\), there is an injective homomorphism $\iota_{\lambda_U}:G_{\widetilde U}\hookrightarrow G_{\widetilde U'}$ satisfying
$
        \lambda_U(g\tilde x)
        =
        \iota_{\lambda_U}(g)\lambda_U(\tilde x).$ 
The coordinate change \(\lambda_U\) is unique up to the action of
\(G_{\widetilde U'}\), and the coordinate changes are required to satisfy
the usual cocycle compatibility condition on triple overlaps.

For a point \(x\in |X|\), choose a chart
\((\widetilde U,G_{\widetilde U},\phi_U)\) and a lift
\(\tilde x\in \widetilde U\) with \(\phi_U(\tilde x)=x\). The isotropy group
of \(x\) is written as 
$
        G_x:=\{g\in G_{\widetilde U}\mid g\tilde x=\tilde x\}$,
which is well-defined up to conjugacy. We call \(x\) a regular point if
\(G_x\) is trivial.

A smooth orbifold \(X\) is oriented if every chart
\(\widetilde U\) is oriented, each \(G_{\widetilde U}\)-action is
orientation-preserving, and all coordinate changes are orientation-preserving.
Let \(X\) and \(Y\) be smooth orbifolds.

\begin{defn}
 A smooth orbifold map \(f:X\to Y\) consists of a continuous map \(f:|X|\to |Y|\) which is smooth at regular points, together with the following local data. For every point \(x\in |X|\), there exist orbifold charts \((\widetilde U,G_{\widetilde U},\phi_U)\) around \(x\) and \((\widetilde V,G_{\widetilde V},\phi_V)\) around \(f(x)\), a smooth map \(\widetilde f:\widetilde U\to \widetilde V\), and a homomorphism \(\theta_f:G_{\widetilde U}\to G_{\widetilde V}\), such that \(f(U)\subset V\), $$\widetilde f(g\cdot \tilde x)=\theta_f(g)\cdot \widetilde f(\tilde x)$$ for all \(g\in G_{\widetilde U}\), and the induced map on the quotient \(\widetilde U/G_{\widetilde U}\to \widetilde V/G_{\widetilde V}\) agrees with \(f\). These local data are required to be compatible with coordinate changes of the orbifold atlases, in the following sense. Suppose \(\lambda:\widetilde U\hookrightarrow \widetilde U'\) and \(\mu:\widetilde V\hookrightarrow \widetilde V'\) are coordinate changes, and let \((\widetilde f,\theta_f)\) and \((\widetilde f',\theta_f')\) be local representatives of \(f\) on these charts. Then, after possibly shrinking \(\widetilde U\), there exists an element \(h\in G_{\widetilde V'}\) such that the following diagram commutes 
 \[
\begin{CD}
\widetilde U @>{\widetilde f}>> \widetilde V \\
@V{\lambda}VV @VV{h\circ\mu}V \\
\widetilde U' @>{\widetilde f'}>> \widetilde V'.
\end{CD}
\]
\end{defn}
\begin{defn}\label{equal=Def}
Let \(f_0,f_1:X\to Y\) be smooth orbifold maps. We say that \(f_0=f_1\) if
the underlying maps \(|X|\to |Y|\) agree and the following condition holds.
For every \(x\in |X|\), choose local charts
\((\widetilde U,G_{\widetilde U})\) around \(x\) and
\((\widetilde V,G_{\widetilde V})\) around \(f_0(x)=f_1(x)\) such that
\(f_i\) is represented by \((\widetilde f_i,\theta_i)\), \(i=0,1\). Then
there exists \(h\in G_{\widetilde V}\) such that the diagrams
\[
\begin{CD}
\widetilde U @>{\widetilde f_0}>> \widetilde V \\
@V{\id_{\widetilde U}}VV @VV{h}V \\
\widetilde U @>{\widetilde f_1}>> \widetilde V
\end{CD}
\qquad
\begin{CD}
G_{\widetilde U} @>{\theta_0}>> G_{\widetilde V} \\
@V{\id}VV @VV{\operatorname{Ad}_h}V \\
G_{\widetilde U} @>{\theta_1}>> G_{\widetilde V}
\end{CD}
\]
commute.
\end{defn}

\begin{defn}
Let \(H\) be a compact Lie group and let \(X\) be a smooth orbifold. An orbifold principal \(H\)-bundle over \(X\) is a smooth orbifold \(P\) equipped with a smooth surjective orbifold map \(\pi:P\to X\) and a smooth right \(H\)-action on \(P\) preserving the fibers of \(\pi\), such that the following local condition holds. For every orbifold chart \((\widetilde U,G_{\widetilde U},\phi_U)\) of \(X\), there is an orbifold chart of \(P\) over \(U\) of the form \((\widetilde U\times H,G_{\widetilde U})\), under which the projection \(\pi\) is represented by the projection $\pr:\widetilde U\times H\to \widetilde U,$ the right \(H\)-action is represented by $(\tilde x,h)\cdot k=(\tilde x,hk)$, and the \(G_{\widetilde U}\)-action on \(\widetilde U\times H\) covers the given \(G_{\widetilde U}\)-action on \(\widetilde U\) and commutes with the right \(H\)-action. Namely, for every \(g\in G_{\widetilde U}\), the diagram $$\begin{CD} \widetilde U\times H @>{\pr}>> \widetilde U \\ @V{g}VV @VV{g}V \\ \widetilde U\times H @>{\pr}>> \widetilde U \end{CD}$$ commutes, and $$g\cdot((\tilde x,h)\cdot k)=(g\cdot(\tilde x,h))\cdot k$$ for all \(g\in G_{\widetilde U}\), \((\tilde x,h)\in \widetilde U\times H\), and \(k\in H\). These local models are required to be compatible with coordinate changes of the orbifold atlas of \(X\).
\end{defn}

Let $X$ be a smooth, oriented orbifold. 
Let $\operatorname{Fr}(X)$ be an orbifold framed bundle with respect to an orbifold Riemann metric $g$.  This is constructed as follows: 
An orbifold chart of \( \operatorname{Fr}(X) \) is associated to an orbifold chart \( \{ \tilde{U}, G_U \} \) of \( X \), and is given by the chart
\[
\{ \tilde{U} \times SO(n), G_U \}.
\]
Here, the \( G_U \)-action on \( \tilde{U} \times SO(n) \) is defined as follows. Suppose the action of \( \sigma \in G_U \) on \( \tilde{U} \) is denoted by
$\sigma_U : \tilde{U} \to \tilde{U}$,
then the linearlization $d\sigma_U : \tilde{U} \to SO(n)$
gives an action on \( \tilde{U} \times SO(n) \) is given by
\[
\sigma^*(\tilde{x}, q) = (\sigma(\tilde{x}), d\sigma_U (\tilde{x}) \cdot q).
\]

The gluing data for \( \operatorname{Fr}(X) \) is given as follows. Suppose the gluing of the orbifold charts of \( X \) is determined by an inclusion of open sets \( U \subset U' \), and a smooth embedding $
\lambda_U : \tilde{U} \to \tilde{U}'$
defined up to the action of \( G_{U'} \). Using the linearization of the orbifold coordinate change, we have 
$g_{\lambda_U} : \tilde{U} \to SO(n)$ 
and 
\[
\tilde{U} \times SO(n) \to \tilde{U}' \times SO(n), \quad (\tilde{x}, q) \mapsto (\lambda_U(\tilde{x}), g_{\lambda_U}(\tilde{x}) \cdot q).
\]

Define a right \( SO(n) \)-action on \( \tilde{U} \times SO(n) \) by $
(\tilde{x}, q) \cdot g := (\tilde{x}, qg)$ ,
then this action is preserved under the gluings described above and defines a right \( SO(n) \)-action on the total space \( \operatorname{Fr}(X) \). This right action commutes with the \( G_U \)-action from the orbifold chart. This give an orbifold $SO(n)$-bundle, the framed bundle of $X$.

Orbifold spin structures are defined in analogous ways to the smooth case. 

Let $\tau$ be a smooth involution on a smooth oriented orbifold $X$.
Take a \(\tau\)-invariant orbifold Riemannian metric \(g\).  Then the
differential of \(\tau\) induces an involution $
d\tau:\operatorname{Fr}(X)\to \operatorname{Fr}(X)$ 
as an orbifold principal \(SO(n)\)-bundle morphism.  One can see $(d\tau)^2= \id$.

Let $
\rho:P\to \operatorname{Fr}(X)$ 
be an orbifold spin structure. 
\begin{defn}
A lift of \(\tau\) to the spin structure \(P\) is called even or odd if the lift of $d\tau$ is of order two or four, i.e. there is a lift $\wt{\tau}: P \to P$ covers $\tau$ and $\wt{\tau}^2 = \id$ or $\wt{\tau}^2=-\id$ in the sense of 
\cref{equal=Def}. 
We say a spin $n$-orbifold equipped with an odd involution {\it real spin orbifold}. 
\end{defn}

\bibliographystyle{alpha}
\bibliography{tex}

\end{document}